\theoremstyle{plain}
\newtheorem{theorem}{Theorem}[section]
\newtheorem{lemma}[theorem]{Lemma}
\newtheorem{proposition}[theorem]{Proposition}
\theoremstyle{definition}
\newtheorem{remark}[theorem]{Remark}
\newtheorem{definition}[theorem]{Definition}
\newtheorem{question}[theorem]{Question}
\newcommand{\bbN}{\mathbb{N}}
\newcommand{\bbZ}{\mathbb{Z}}
\newcommand{\bbR}{\mathbb{R}}
\newcommand{\floor}[1]{\lfloor #1 \rfloor}
\newcommand{\FLOOR}[1]{\left\lfloor #1 \right\rfloor}
\newcommand{\CEIL}[1]{\left\lceil #1 \right\rceil}
\newcommand{\FJADEF}[3]{{#1}:{#2}\to{#3}}
\newcommand{\Mod}[1]{\ (\mathrm{mod}\ #1)}
\numberwithin{equation}{section}
\definecolor{Maroon}{RGB}{140,10,0}
\title{Combinatorial settlement planning}
\author[M.\ Puljiz]{Mate\ Puljiz}
\address[Mate Puljiz]{Department of Applied Mathematics\\
	Faculty of Electrical Engineering and Computing\\
	University of Zagreb\\ 
 Zagreb\\ 
	Croatia}
\email{mate.puljiz@fer.hr}
\author[S.\ \v{S}ebek]{Stjepan\ \v{S}ebek}
\address[Stjepan\ \v{S}ebek]{Department of Applied Mathematics\\
	Faculty of Electrical Engineering and Computing\\
	University of Zagreb\\ 
 Zagreb\\ 
	Croatia}
\email{stjepan.sebek@fer.hr}
\author[J.\ \v{Z}ubrini\'{c}]{Josip\ \v{Z}ubrini\'{c}}
\address[Josip\ \v{Z}ubrini\'{c}]{Department of Applied Mathematics\\
	Faculty of Electrical Engineering and Computing\\
	University of Zagreb\\ 
 Zagreb\\ 
	Croatia}
\email{josip.zubrinic@fer.hr}
\subjclass[2020]{05B40, 
90C10, 
00A67} 
\keywords{maximal configuration, optimal patterns, tilings, forbidden induced subgraph problem, shift space}
\begin{document}

\begin{abstract}
	In this article, we consider a combinatorial settlement model on a rectangular grid where at least one side (east, south or west) of each house must be exposed to sunlight without obstructions. We are interested in maximal configurations, where no additional houses can be added. For a fixed $m\times n$ grid we explicitly calculate the lowest number of houses, and give close to optimal bounds on the highest number of houses that a maximal configuration can have. Additionally, we provide an integer programming formulation of the problem and solve it explicitly for small values of $m$ and $n$.
\end{abstract}

\maketitle

%
%
%
%

\section{Introduction}\label{sec:intro}

Consider the following problem: a rectangular $m\times n$ tract of land, whose sides are oriented north-south and east-west as in Figure \ref{fig:tract_of_land}, consists of $mn$ square lots of size $1 \times 1$. Each $1 \times 1$ square lot can be either empty, or occupied by a single house. A house is said to be \emph{blocked from sunlight} if the three lots immediately to its east, west and south are all occupied (it is assumed that sunlight always comes from the south\footnote{Our Southern Hemisphere friends are welcome to turn the page upside down when inspecting the figures in our paper.}). Along the boundary of the rectangular $m \times n$ grid, there are no obstructions to sunlight. We refer to the models of such rectangular tracts of land, with certain lots occupied, as configurations. Of interest are maximal configurations, where no house is blocked from the sunlight, and any further addition of a house to the configuration on any empty lot would result in either that house being blocked from the sunlight, or it would cut off sunlight from some previously built house, or both.

\begin{figure}[h]
	\centering
	\begin{tikzpicture}[scale = 0.5]
	\draw[step=1cm,black,very thin] (0, 0) grid (7,5);
	\draw [->,>=stealth] (9,1) -- (9,4);
	\node[anchor=west] at (9,4) {North};
	\end{tikzpicture}
	\caption{An example of a tract of land ($m = 5$, $n = 7$).}
	\label{fig:tract_of_land}
\end{figure}
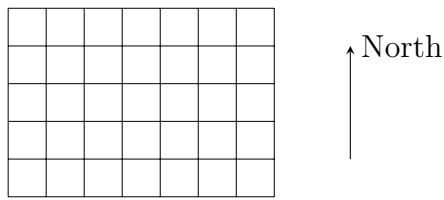

\medskip

We can encode any fixed configuration as a 0-1, $m\times n$ matrix $C$, with $C_{i,j}=1$ if and only if a house is built on the lot $(i,j)$ ($i$-th row and $j$-th column, counted from the top left corner). We can, equivalently, think of $C$ as a subset of $[m]\times[n] = \{(i,j) : 1\le i \le m,\, 1\le j\le n\}$, where, again, $(i,j)\in C$ if and only if a house is built on the lot $(i,j)$.

It is natural to define \emph{building density} of a configuration $C$ as $\dfrac{|C|}{mn}$, where 
$$|C| = \sum_{i=1}^m\sum_{j=1}^n C_{i,j}$$
is the total number of occupied lots in the configuration $C$, i.e.\ the cardinality of $C$ when $C$ is interpreted as a subset of $[m]\times [n]$. We also call $|C|$ the \emph{occupancy} of $C$.

A configuration $C$ is said to be \emph{permissible} if no house in it is blocked from the sunlight, otherwise it is called \emph{impermissible}.

A configuration $C$ is said to be \emph{maximal} if it is permissible and no other permissible configuration strictly contains it, i.e.\ no further houses can be added to it, whilst ensuring that all the houses still get some sunlight. See Figure \ref{fig:examples} for examples of impermissible, permissible and maximal configuration on a $5 \times 4$ tract of land. Shaded squares represent houses and unshaded squares represent empty lots on the tract of land. The houses that are blocked from the sunlight are marked with the letter x.

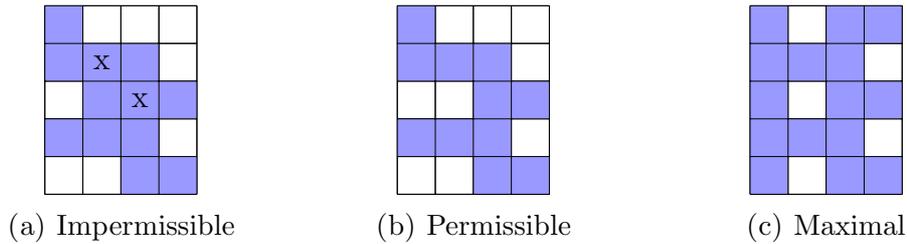
\begin{figure}
	\begin{subfigure}{0.3\textwidth}\centering
		\begin{tikzpicture}[scale = 0.5]
		    \draw[step=1cm,black,very thin] (0, 0) grid (4,5);
		    \fill[blue!40!white] (0,1) rectangle (1,2);
		    \fill[blue!40!white] (0,3) rectangle (1,4);
		    \fill[blue!40!white] (0,4) rectangle (1,5);
		    \fill[blue!40!white] (1,1) rectangle (2,2);
		    \fill[blue!40!white] (1,2) rectangle (2,3);
		    \fill[blue!40!white] (1,3) rectangle (2,4);
		    \node[] at (1.5,3.5) {x};
		    \fill[blue!40!white] (2,0) rectangle (3,1);
		    \fill[blue!40!white] (2,1) rectangle (3,2);
		    \fill[blue!40!white] (2,2) rectangle (3,3);
		    \node[] at (2.5,2.5) {x};
		    \fill[blue!40!white] (2,3) rectangle (3,4);
		    \fill[blue!40!white] (3,0) rectangle (4,1);
		    \fill[blue!40!white] (3,2) rectangle (4,3);
		    \draw[step=1cm,black,very thin] (0, 0) grid (4,5);
		\end{tikzpicture}
		\caption{Impermissible}
	\end{subfigure}
	\begin{subfigure}{0.3\textwidth}\centering
		\begin{tikzpicture}[scale = 0.5]
		    \draw[step=1cm,black,very thin] (0, 0) grid (4,5);
		    \fill[blue!40!white] (0,1) rectangle (1,2);
		    \fill[blue!40!white] (0,3) rectangle (1,4);
		    \fill[blue!40!white] (0,4) rectangle (1,5);
		    \fill[blue!40!white] (1,1) rectangle (2,2);
		    \fill[blue!40!white] (1,3) rectangle (2,4);
		    \fill[blue!40!white] (2,0) rectangle (3,1);
		    \fill[blue!40!white] (2,1) rectangle (3,2);
		    \fill[blue!40!white] (2,2) rectangle (3,3);
		    \fill[blue!40!white] (2,3) rectangle (3,4);
		    \fill[blue!40!white] (3,0) rectangle (4,1);
		    \fill[blue!40!white] (3,2) rectangle (4,3);
		    \draw[step=1cm,black,very thin] (0, 0) grid (4,5);
		\end{tikzpicture}
		\caption{Permissible}
	\end{subfigure}
	\begin{subfigure}{0.3\textwidth}\centering
		\begin{tikzpicture}[scale = 0.5]
		    \draw[step=1cm,black,very thin] (0, 0) grid (4,5);
		    \fill[blue!40!white] (0,0) rectangle (1,1);
		    \fill[blue!40!white] (0,1) rectangle (1,2);
		    \fill[blue!40!white] (0,2) rectangle (1,3);
		    \fill[blue!40!white] (0,3) rectangle (1,4);
		    \fill[blue!40!white] (0,4) rectangle (1,5);
		    \fill[blue!40!white] (1,1) rectangle (2,2);
		    \fill[blue!40!white] (1,3) rectangle (2,4);
		    \fill[blue!40!white] (2,0) rectangle (3,1);
		    \fill[blue!40!white] (2,1) rectangle (3,2);
		    \fill[blue!40!white] (2,2) rectangle (3,3);
		    \fill[blue!40!white] (2,3) rectangle (3,4);
		    \fill[blue!40!white] (2,4) rectangle (3,5);
		    \fill[blue!40!white] (3,0) rectangle (4,1);
		    \fill[blue!40!white] (3,2) rectangle (4,3);
		    \fill[blue!40!white] (3,4) rectangle (4,5);
		    \draw[step=1cm,black,very thin] (0, 0) grid (4,5);
		\end{tikzpicture}
		\caption{Maximal}
	\end{subfigure}
	\caption{Examples of impermissible, permissible and maximal configuration on a $5 \times 4$ tract of land.}\label{fig:examples}
\end{figure}
One is naturally interested in maximal configurations, especially those that achieve the highest and the lowest building density, or occupancy. One can think of those as solving one of two natural optimization problems:
\begin{enumerate}[Problem 1.]\it
	\item Maximize the revenue of a real estate investor, by building as many houses as possible, whilst ensuring that each house gets some sunlight.
	
	\item Maximize the quality of living, by arranging the houses in order to achieve the lowest building density possible, whilst ensuring that no new houses can be added in future without compromising access to sunlight.
\end{enumerate}

Maximal configurations, for a particular $m,n\in\bbN$, with the highest building density possible  we call \emph{efficient}, while those with the lowest building density possible we call \emph{inefficient}. We denote the number of occupied lots (occupancy) in any of the efficient configurations by $E_{m,n}$, and the number of occupied lots in any of the inefficient configurations by $I_{m,n}$.

One may also be interested in maximal configurations which exhibit occupancies in between those two extremes. We study these and related problems in a forthcoming paper \cite{PSZ-21-2}.

We were introduced to this problem by Juraj Bo\v{z}i\'{c} who came up with it during his studies at Faculty of Architecture, University of Zagreb. His main goal was to design a model for settlement planning where the impact of the architect would be as small as possible and people would have a lot of freedom in the process of building the settlement. This minimal intervention from the side of the architect is given through the condition that houses are not allowed to be blocked from the sunlight and that the tracts of land on which the settlements are built are of rectangular shapes.

\begin{remark}[Boundary condition]
	As stated in the formulation of the problem, there are no obstructions to sunlight along the boundary of the rectangular $m\times n$ grid. However, it is possible to consider various other boundary conditions. One might be interested in the case where the whole border is bricked up and houses can get sunlight only from the empty lots within the grid. In fact, it is not hard to show that for each $m,n\ge 2$ there is an efficient configuration with all lots along the southern, eastern and western border occupied. This implies $E_{m,n} = \check{E}_{m-1,n-2}+2m+n-2$ where $\check{E}_{m-1,n-2}$ is the number of occupied lots in any efficient $(m-1)\times (n-2)$ configuration with the bricked up boundary. Note that the boundary condition along the northern side of the border is irrelevant.
\end{remark}

The rest of the paper is organized as follows. In Section \ref{sec:simple_bounds} we provide coarse bounds on the size of maximal configurations.  In Section \ref{sec:nearoptimal} we introduce periodic configurations on $\bbZ^2$ which, when restricted to a finite grid, come close to attaining the occupancy of efficient or inefficient configurations. In Section \ref{sec:in_eficientformula} we find a closed formula for $I_{m,n}$ and improve the bounds on $E_{m,n}$. In Section \ref{sec:ip} we give an integer programming formulation of our optimization problems and compute $E_{m,n}$ explicitly, for values $m,n\le 16$, using IBM ILOG CPLEX \cite{cplex2009v12}. Finally, in Section \ref{sec:alt_formulations}, we provide alternative formulations of the problem of finding efficient configurations.

\section{Two simple bounds and asymptotics for large \texorpdfstring{$m$}{m} and \texorpdfstring{$n$}{n}}\label{sec:simple_bounds}

\begin{lemma}\label{lm:crudeBound}
	If $C$ is any maximal configuration on the $m\times n$ grid, $m,n\ge 2$, then $$\frac{1}{2}mn\le|C|\le \frac{3}{4}mn+\frac{m-1}{2}+\frac{n}{4}.$$
\end{lemma}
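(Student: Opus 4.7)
The plan is to prove the two inequalities via complementary counting arguments, based on a simple local observation in each case. Writing $E = [m]\times[n]\setminus C$ for the set of empty lots, I would count pairs (house, sunlight-target) for the upper bound and pairs (empty lot, witness-house) for the lower bound.

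For the upper bound, the key local fact is that every house $h = (i,j)$ in a permissible configuration must have at least one of its east, west, or south neighbors either empty or lying outside the grid, since otherwise $h$ would be blocked. For each house, fix one such ``sunlight direction'' $\pi(h)$. An interior empty cell $e$ can be the $\pi$-target of at most three houses (the ones immediately to its west, east, and north), while the ``exterior'' targets just outside the south, east, and west edges of the grid are only reachable by houses lying on those edges. The union of the south row, the west column, and the east column consists of $2m + n - 2$ cells, after discounting the two corners $(m,1)$ and $(m,n)$ counted twice. Summing $\pi$ over all houses,
\begin{equation*}
|C| \;\le\; 3|E| + (2m + n - 2) \;=\; 3(mn-|C|) + 2m + n - 2,
\end{equation*}
which rearranges exactly to $|C| \le \tfrac{3}{4}mn + \tfrac{m-1}{2} + \tfrac{n}{4}$.

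For the lower bound the dual idea is that, by maximality, every empty cell $e = (i,j) \in E$ must be blocked from being filled: placing a house at $e$ would complete a forbidden T-pattern, i.e.\ some house would have all three of its east, west, and south neighbors occupied. A short case analysis keyed to the position of $e$ inside such a T produces four (not mutually exclusive) possibilities, each pinpointing three specific cells of $C$: (a) $e$ is the middle-top of the T, so $(i,j-1), (i,j+1), (i+1,j) \in C$; (b) $e$ is the left-top, so $(i,j+1), (i,j+2), (i+1,j+1) \in C$; (c) the symmetric westward version; (d) $e$ is the bottom of the T, so $(i-1,j-1), (i-1,j), (i-1,j+1) \in C$. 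The plan is to produce an injection $\phi \colon E \to C$ that sends each $e$ into one of the three identified houses of its chosen witness T, giving $|E| \le |C|$ and hence $|C| \ge \tfrac{1}{2}mn$.

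The main obstacle is ensuring the injectivity of $\phi$. A naive choice --- always sending $e$ to the witness house closest to $e$ --- fails, because empties in case~(a) at $(i,j)$ and in case~(d) at $(i+2,j)$ then both designate the common house $(i+1,j)$. The remedy is to exploit two layers of freedom: which of the (possibly several) applicable cases is used at a given $e$, and which of the three houses in the chosen witness T becomes $\phi(e)$. A pairwise compatibility check rules out many would-be collisions by forcing a single cell to be simultaneously occupied and empty (for example, cases~(b) and~(c) at two neighboring empties cannot both designate the same central house as their anchor), and a deterministic tie-breaking rule among the remaining cases finishes the injection. In contrast, the upper bound falls out immediately from the sunlight-direction count and already reproduces the stated formula, with all lower-order terms matching exactly.
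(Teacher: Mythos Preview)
Your upper bound is correct and is exactly the paper's argument: each house either lies on the south/east/west boundary or receives light from some empty lot, and each empty lot lights at most three houses, giving $|C|\le 3|E|+2m+n-2$.

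For the lower bound your strategy (an injection $\phi\colon E\to C$) is also the paper's, but your execution has a gap. You correctly list the four local witnesses (a)--(d), yet the sentence ``a deterministic tie-breaking rule among the remaining cases finishes the injection'' is not a proof: you have neither specified the rule nor checked that it avoids collisions. The (a)/(d) clash you flag is genuine, and resolving it by choosing among three houses in each witness T requires a case analysis you have not supplied.

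The paper avoids all of this with a cleaner choice of $\phi$. Observe that in your cases (b), (c), (d) the empty lot $e$ is the \emph{unique} source of light to a particular occupied neighbour $h$ (respectively the east, west, or north neighbour of $e$). Set $\phi(e)=h$ in those cases; injectivity is then immediate, since a house with a unique light source can be hit only by that source. If none of (b), (c), (d) holds --- so only (a) applies --- set $\phi(e)=(i,j+1)$, the east neighbour. This cannot collide with any (b)/(c)/(d) image, because the failure of (b) at $e$ forces $(i,j+1)$ to have at least two empty neighbours among its east, south, west sides (namely $e$ itself and one of $(i,j+2)$, $(i+1,j+1)$), so $(i,j+1)$ is nobody's unique-light target; and two distinct ``(a)-only'' empties obviously have distinct east neighbours. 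That is the whole argument --- no tie-breaking scheme is needed once you map to the \emph{uniquely lit} neighbour rather than to an arbitrary house of the witness T.
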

\begin{proof}
	We first prove the upper bound. If we interpret $C$ as the set of all occupied lots, then its complement $C^c$ is the set of all empty lots. Note that each occupied lot is either on the eastern, southern or western edge of the grid (there are $2m+n-2$ such lots); or it gets sunlight from at least one empty lot in $C^c$; or possibly both. Furthermore, each empty lot in $C^c$ gives light to at most 3 occupied lots in $C$. Therefore,
	$$|C| \le 3|C^c|+2m+n-2,$$
	$$4|C| \le3(|C^c|+|C|)+2m+n-2=3mn+2m+n-2,$$
	and the upper bound follows.
	
	To prove the lower bound, it suffices to construct an injection from $C^c$ to $C$, as then, the bound would follow from
	$$mn=|C|+|C^c|\le 2|C|.$$
	To that end, consider any empty lot $(i,j)\in C^c$. Since $C$ is maximal, the house cannot be built on the lot $(i,j)$, i.e.\ $(i,j)$ cannot be added to the configuration, without blocking some existing house (or itself) from the sunlight. This can happen in exactly two ways:
	\begin{itemize}
		\item either $(i,j)$ is the only source of light to at least one of its occupied neighbors to the east, north or west: $(i,j+1)$, $(i-1,j)$, or $(i,j-1)$; (note that we do not require that all of those three neighbors are occupied)
		\item or alternatively, its neighbors to the east, south and west: $(i,j+1)$, $(i+1,j)$, and $(i,j-1)$ are all occupied (but $(i,j)$ is not the only source of light to $(i,j+1)$, nor $(i-1,j)$, nor $(i,j-1)$, if occupied).
	\end{itemize}
	In the first case, we map $(i,j)\in C^c$ to any of the neighbors: $(i,j+1)$, $(i-1,j)$, or $(i,j-1)$ that are occupied and for which $(i,j)$ is the only source of light.
	In the second case, we map $(i,j)\in C^c$ to its east neighbor $(i,j+1)$. It is clear from the construction that this does define an injection from $C^c$ to $C$, thus completing the proof of the lower bound.
\end{proof}

\begin{remark}
	Lemma \ref{lm:crudeBound} shows that for large grids, as both $m\to\infty$ and $n\to\infty$, the building density of maximal configurations must be between $\frac{1}{2}$ and $\frac{3}{4}$. We will later see that efficient configurations, in the limit, do approach building density $\frac{3}{4}$, while inefficient configurations, in the limit, do approach building density $\frac{1}{2}$. It is also interesting to note that these limit building densities are in, in fact, attained on the infinite grid by periodic configurations introduced in Section \ref{sec:nearoptimal}.
\end{remark}

\section{(Near-)optimal patterns}\label{sec:nearoptimal}
In this section we describe periodic configurations on $\mathbb{Z}^2$ which yield optimal building densities, and the associated finite versions of these configurations which obey a similar periodic rule. The motivation for this is to construct somewhat regular configurations with near-optimal building densities. For this, we introduce the following definitions: 

A configuration on $\mathbb{Z}^2$ is simply a subset of $\mathbb{Z}^2$, or equivalently a function belonging to $\{0,1\}^{\mathbb{Z}^2}$. For a fixed configuration $C$ on $\mathbb{Z}^2$, we define its building density as: 
\begin{equation*}
    \mathcal{D}(C) = \lim_{n \to \infty} \frac{1}{(2n+1)^2}\sum_{i=-n}^n \sum_{j = -n}^n C_{i,j}
\end{equation*}
when this limit exists.

A similar argument as in Lemma \ref{lm:crudeBound} yields the following bounds for the building density of maximal configurations on $\mathbb{Z}^2$: 
\begin{equation*}
    \frac{1}{2} \leq \mathcal{D}(C) \leq \frac{3}{4}, \quad \forall C\in \{0,1\}^{\mathbb{Z}^2}, \mbox{ $C$ maximal. } 
\end{equation*}

\subsection{Patterns occurring in efficient configurations}
\subsubsection{Brick pattern}\label{subsubsect:brickPattern}
Here we introduce the pattern for maximal configurations which we refer to as the brick pattern, see Figure \ref{fig:brickfigure}. This pattern defines a maximal configuration $C^{\rm brick}$ on $\mathbb{Z}^2$ with the highest possible building density of $\mathcal{D}(C^{\rm brick}) = 3/4$.
\begin{figure}
        \centering
        \begin{tikzpicture}[scale = 0.5]
            \draw[step=1cm,black, thin] (-0.9, -0.9) grid (11.9,4.9);
            \fill[blue!40!white] (-0.9,-0.9) rectangle (0,0);
            \fill[blue!40!white] (0,-0.9) rectangle (1,0);

            \fill[blue!40!white] (2,-0.9) rectangle (3,0);
            \fill[blue!40!white] (3,-0.9) rectangle (4,0);
            \fill[blue!40!white] (4,-0.9) rectangle (5,0);
  
            \fill[blue!40!white] (6,-0.9) rectangle (7,0);
            \fill[blue!40!white] (7,-0.9) rectangle (8,0);
            \fill[blue!40!white] (8,-0.9) rectangle (9,0);
            
            \fill[blue!40!white] (10,-0.9) rectangle (11,0);
            \fill[blue!40!white] (11,-0.9) rectangle (11.9,0);
            \fill[blue!40!white] (0,0) rectangle (1,1);
            \fill[blue!40!white] (1,0) rectangle (2,1);
            \fill[blue!40!white] (2,0) rectangle (3,1);

            \fill[blue!40!white] (4,0) rectangle (5,1);
            \fill[blue!40!white] (5,0) rectangle (6,1);
            \fill[blue!40!white] (6,0) rectangle (7,1);
            
            \fill[blue!40!white] (8,0) rectangle (9,1);
            \fill[blue!40!white] (9,0) rectangle (10,1);        \fill[blue!40!white] (10,0) rectangle (11,1);
            \fill[blue!40!white] (-0.9,1) rectangle (0,2);
            \fill[blue!40!white] (0,1) rectangle (1,2);

            \fill[blue!40!white] (2,1) rectangle (3,2);
            \fill[blue!40!white] (3,1) rectangle (4,2);
            \fill[blue!40!white] (4,1) rectangle (5,2);
  
            \fill[blue!40!white] (6,1) rectangle (7,2);
            \fill[blue!40!white] (7,1) rectangle (8,2);
            \fill[blue!40!white] (8,1) rectangle (9,2);
            
            \fill[blue!40!white] (10,1) rectangle (11,2);
            \fill[blue!40!white] (11,1) rectangle (11.9,2);
            \fill[blue!40!white] (0,2) rectangle (1,3);
            \fill[blue!40!white] (1,2) rectangle (2,3);
            \fill[blue!40!white] (2,2) rectangle (3,3);

            \fill[blue!40!white] (4,2) rectangle (5,3);
            \fill[blue!40!white] (5,2) rectangle (6,3);
            \fill[blue!40!white] (6,2) rectangle (7,3);
            
            \fill[blue!40!white] (8,2) rectangle (9,3);
            \fill[blue!40!white] (9,2) rectangle (10,3);        \fill[blue!40!white] (10,2) rectangle (11,3);
                    \fill[blue!40!white] (-0.9,3) rectangle (0,4);
            \fill[blue!40!white] (0,3) rectangle (1,4);

            \fill[blue!40!white] (2,3) rectangle (3,4);
            \fill[blue!40!white] (3,3) rectangle (4,4);
            \fill[blue!40!white] (4,3) rectangle (5,4);
  
            \fill[blue!40!white] (6,3) rectangle (7,4);
            \fill[blue!40!white] (7,3) rectangle (8,4);
            \fill[blue!40!white] (8,3) rectangle (9,4);
            
            \fill[blue!40!white] (10,3) rectangle (11,4);
            \fill[blue!40!white] (11,3) rectangle (11.9,4);
            \fill[blue!40!white] (0,4) rectangle (1,4.9);
            \fill[blue!40!white] (1,4) rectangle (2,4.9);
            \fill[blue!40!white] (2,4) rectangle (3,4.9);

            \fill[blue!40!white] (4,4) rectangle (5,4.9);
            \fill[blue!40!white] (5,4) rectangle (6,4.9);
            \fill[blue!40!white] (6,4) rectangle (7,4.9);
            
            \fill[blue!40!white] (8,4) rectangle (9,4.9);
            \fill[blue!40!white] (9,4) rectangle (10,4.9);
            \fill[blue!40!white] (10,4) rectangle (11,4.9);
            \draw[step=1cm,black, thin] (-0.9, -0.9) grid (11.9,4.9);
        \end{tikzpicture}
        \caption{Brick pattern.}\label{fig:brickfigure}
    \end{figure}
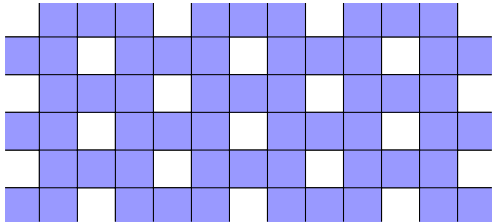    

 Note that the configuration on $\mathbb{Z}^2$ with the brick pattern has the following two properties: 
\begin{itemize}
    \item Each empty lot provides light to exactly three occupied neighbors.
    \item Each occupied lot receives light from exactly one neighbor. 
\end{itemize}
        The construction of finite maximal configurations with the brick pattern is depicted in Figure \ref{brickpatternexamples}. The configurations are obtained by restricting the configuration $C^{\rm brick}$ to a finite grid, and building additional houses on the newly available lots. Among all such restrictions, we choose one that, in the end, yields the highest occupancy.
\begin{figure}
\centering
\begin{subfigure}{.5\textwidth}
  \centering
  \begin{tikzpicture}[scale = 0.5]
            \draw[step=1cm,black,very thin] (0, 0) grid (7,3);
            \fill[blue!40!white] (0,0) rectangle (1,1);
            \fill[blue!40!white] (0,1) rectangle (1,2);
            \fill[blue!40!white] (0,2) rectangle (1,3);
            
            \fill[blue!40!white] (1,0) rectangle (2,1);
            \fill[blue!40!white] (1,2) rectangle (2,3);
            
            \fill[blue!40!white] (2,1) rectangle (3,2);
            \fill[blue!40!white] (2,0) rectangle (3,1);
            \fill[blue!40!white] (2,2) rectangle (3,3);
            
            \fill[blue!40!white] (3,1) rectangle (4,2);
            
            \fill[blue!40!white] (4,0) rectangle (5,1);
            \fill[blue!40!white] (4,1) rectangle (5,2);
			\fill[blue!40!white] (4,2) rectangle (5,3);

            \fill[blue!40!white] (5,0) rectangle (6,1);
            \fill[blue!40!white] (5,2) rectangle (6,3);
            
            \fill[blue!40!white] (6,0) rectangle (7,1);
            \fill[blue!40!white] (6,1) rectangle (7,2);
			\fill[blue!40!white] (6,2) rectangle (7,3);

            \draw[step=1cm,black,very thin] (0, 0) grid (7,3);
        \end{tikzpicture}
        \caption{}\label{brickpattern1}
\end{subfigure}%
\begin{subfigure}{.5\textwidth}
  \centering
  \begin{tikzpicture}[scale = 0.5]
            \draw[step=1cm,black,very thin] (0, 0) grid (6,4);
            \fill[blue!40!white] (0,0) rectangle (1,1);
            \fill[blue!40!white] (0,1) rectangle (1,2);
            \fill[blue!40!white] (0,2) rectangle (1,3);
            \fill[blue!40!white] (0,3) rectangle (1,4);

            \fill[blue!40!white] (1,1) rectangle (2,2);
            \fill[blue!40!white] (1,3) rectangle (2,4);
            
            \fill[blue!40!white] (2,0) rectangle (3,1);
            \fill[blue!40!white] (2,1) rectangle (3,2);
            \fill[blue!40!white] (2,2) rectangle (3,3);
            \fill[blue!40!white] (2,3) rectangle (3,4);
            
            \fill[blue!40!white] (3,0) rectangle (4,1);
            \fill[blue!40!white] (3,2) rectangle (4,3);

            \fill[blue!40!white] (4,0) rectangle (5,1);
            \fill[blue!40!white] (4,1) rectangle (5,2);
            \fill[blue!40!white] (4,2) rectangle (5,3);
            \fill[blue!40!white] (4,3) rectangle (5,4);
            
            \fill[blue!40!white] (5,0) rectangle (6,1);
            \fill[blue!40!white] (5,1) rectangle (6,2);
            \fill[blue!40!white] (5,3) rectangle (6,4);

            \draw[step=1cm,black,very thin] (0, 0) grid (6,4);
        \end{tikzpicture}
  		\caption{}\label{brickpattern2}
\end{subfigure}
\caption{Finite configurations obtained from the brick pattern.}\label{brickpatternexamples}
\end{figure}
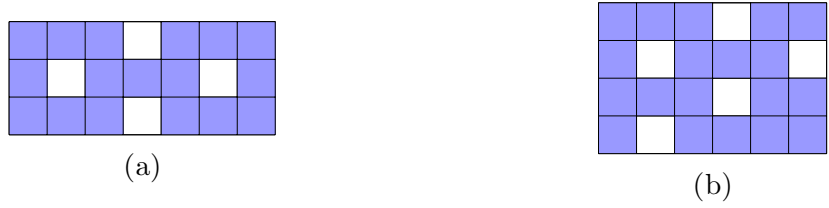  
        In order to determine the occupancy of such configurations, we start by fixing the dimensions $m\geq 2$ and $n \geq 3$. This configuration consists of $\CEIL{\frac{n}{2}}$ fully filled vertical columns of height $m$. The remaining $\FLOOR{\frac{n}{2}}$ columns are filled in a shifted manner with only around a half of the lots occupied. In fact, due to the necessity of shifting the half empty columns, we end up with around a half of them filled with $\FLOOR{\frac{m}{2}}$ houses and the remaining filled with $\CEIL{\frac{m}{2}}$ houses. However, it is always more efficient to arrange the houses in such a way that there are $\CEIL{\frac{\floor{\frac{n}{2}}}{2}}$ columns with $\CEIL{\frac{m}{2}}$ houses and $\FLOOR{\frac{\floor{\frac{n}{2}}}{2}}$ columns with $\FLOOR{\frac{m}{2}}$ houses. We leave it to the interested reader to verify that in the cases $n \equiv 0 \Mod{4}$, independently of $m$; and $n \equiv 2 \Mod{4}, m\equiv 0 \Mod{2}$, there is a possible minor improvement by choosing the restriction such that the bottom right lot can be filled, see Figure \ref{brickpattern2}.

\begin{definition}[Brick pattern function]
        The function that gives us the occupancy $|C|$ of a configuration with the brick pattern is
        \begin{equation*}\small
            \mathcal{O}^{\rm brick}(m,n) := \begin{cases}
            m\CEIL{\frac{n}{2}}+\CEIL{\frac{\floor{\frac{n}{2}}}{2}}\CEIL{\frac{m}{2}} + \FLOOR{\frac{\floor{\frac{n}{2}}}{2}}\FLOOR{\frac{m}{2}} + 1, & {\small\begin{array}{c}
            \text{if } n \equiv 0 \Mod{4} \text{ or }\\
            n \equiv 2 \Mod{4}, m\equiv 0 \Mod{2};
        	\end{array}} \\       
           m\CEIL{\frac{n}{2}}+\CEIL{\frac{\floor{\frac{n}{2}}}{2}}\CEIL{\frac{m}{2}} + \FLOOR{\frac{\floor{\frac{n}{2}}}{2}}\FLOOR{\frac{m}{2}}, &  \mbox{ otherwise. }
            \end{cases}
        \end{equation*}
Additionally, for $n = 2$,  we have $\mathcal{O}^{\rm brick}(m,2) = 2m$. 
    \end{definition}

\subsubsection{Comb pattern}

Another pattern occurring in efficient configurations is the comb pattern, see Figure \ref{fig:combPattern}. The configuration $C^{\rm comb}$ with such a pattern of built houses exhibits a building density of 2/3. 
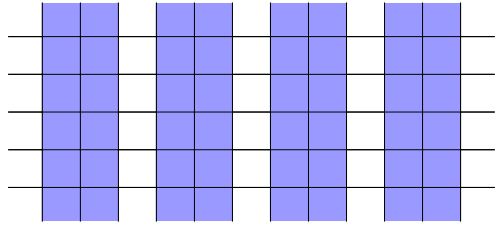
\begin{figure}
        \centering
        \begin{tikzpicture}[scale = 0.5]
            \draw[step=1cm,black,very thin] (-0.9, -0.9) grid (11.9,4.9);
            
            \fill[blue!40!white] (0,-0.9) rectangle (1,0);
            \fill[blue!40!white] (0,0) rectangle (1,1);
            \fill[blue!40!white] (0,1) rectangle (1,2);
            \fill[blue!40!white] (0,2) rectangle (1,3);
            \fill[blue!40!white] (0,3) rectangle (1,4);
            \fill[blue!40!white] (0,4) rectangle (1,4.9);
            
            \fill[blue!40!white] (1,-0.9) rectangle (2,0);
            \fill[blue!40!white] (1,0) rectangle (2,1);
            \fill[blue!40!white] (1,1) rectangle (2,2);
            \fill[blue!40!white] (1,2) rectangle (2,3);
            \fill[blue!40!white] (1,3) rectangle (2,4);
            \fill[blue!40!white] (1,4) rectangle (2,4.9);   
            
            \fill[blue!40!white] (3,-0.9) rectangle (4,0);
            \fill[blue!40!white] (3,0) rectangle (4,1);
            \fill[blue!40!white] (3,1) rectangle (4,2);
            \fill[blue!40!white] (3,2) rectangle (4,3);
            \fill[blue!40!white] (3,3) rectangle (4,4);
            \fill[blue!40!white] (3,4) rectangle (4,4.9);
            
            \fill[blue!40!white] (4,-0.9) rectangle (5,0);
            \fill[blue!40!white] (4,0) rectangle (5,1);
            \fill[blue!40!white] (4,1) rectangle (5,2);
            \fill[blue!40!white] (4,2) rectangle (5,3);
            \fill[blue!40!white] (4,3) rectangle (5,4);
            \fill[blue!40!white] (4,4) rectangle (5,4.9);
            
            \fill[blue!40!white] (6,-0.9) rectangle (7,0);
            \fill[blue!40!white] (6,0) rectangle (7,1);
            \fill[blue!40!white] (6,1) rectangle (7,2);
            \fill[blue!40!white] (6,2) rectangle (7,3);
            \fill[blue!40!white] (6,3) rectangle (7,4);
            \fill[blue!40!white] (6,4) rectangle (7,4.9);
            
            \fill[blue!40!white] (7,-0.9) rectangle (8,0);
            \fill[blue!40!white] (7,0) rectangle (8,1);
            \fill[blue!40!white] (7,1) rectangle (8,2);
            \fill[blue!40!white] (7,2) rectangle (8,3);
            \fill[blue!40!white] (7,3) rectangle (8,4);
            \fill[blue!40!white] (7,4) rectangle (8,4.9);
            
            \fill[blue!40!white] (9,-0.9) rectangle (10,0);
            \fill[blue!40!white] (9,0) rectangle (10,1);
            \fill[blue!40!white] (9,1) rectangle (10,2);
            \fill[blue!40!white] (9,2) rectangle (10,3);
            \fill[blue!40!white] (9,3) rectangle (10,4);
            \fill[blue!40!white] (9,4) rectangle (10,4.9);
            
            \fill[blue!40!white] (10,-0.9) rectangle (11,0);
            \fill[blue!40!white] (10,0) rectangle (11,1);
            \fill[blue!40!white] (10,1) rectangle (11,2);
            \fill[blue!40!white] (10,2) rectangle (11,3);
            \fill[blue!40!white] (10,3) rectangle (11,4);
            \fill[blue!40!white] (10,4) rectangle (11,4.9);
            
            \draw[step=1cm,black,very thin] (-0.9, -0.9) grid (11.9,4.9);
        \end{tikzpicture}
        \caption{Comb pattern.}\label{fig:combPattern}
    \end{figure} 
    Note that the configuration on $\mathbb{Z}^2$ with the comb pattern has the following two properties:
\begin{itemize}
    \item Each empty lot provides light to exactly two of its neighbors.
    \item Each occupied lot receives light from exactly one neighbor. 
\end{itemize}
The way in which we obtain finite maximal configurations with the comb pattern is by restricting $C^{\rm comb}$ to a finite grid and filling all the southernmost lots, see Figure \ref{fig:combpatternexamples}.

    \begin{definition}[Comb pattern function]
    	The function that gives us the occupancy $|C|$ of a configuration with the comb pattern is
        \begin{equation*}
           \mathcal{O}^{\rm comb}(m,n) := \begin{cases}
            n + (m-1)\left(\frac{2}{3}n\right), & \text{if } n \equiv 0 \Mod{3}; \\       
            n + (m-1)\left(\frac{2}{3}(n-1) + 1\right), & \text{if } n \equiv 1 \Mod{3}; \\
            n + (m-1)\left(\frac{2}{3}(n-2) + 2\right), & \text{if } n \equiv 2 \Mod{3};
            \end{cases}
        \end{equation*}
        where $m,n\ge 2$.
    \end{definition}
    
    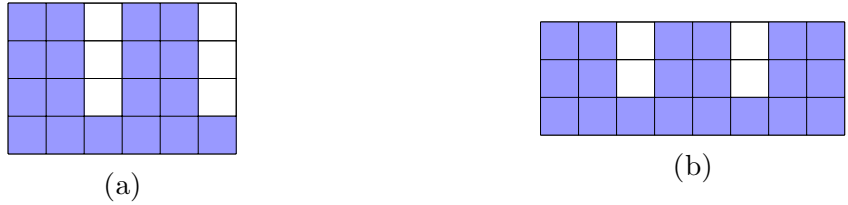
\begin{figure}
\centering
\begin{subfigure}{.5\textwidth}
  \centering
  \begin{tikzpicture}[scale = 0.5]
            \draw[step=1cm,black,very thin] (0, 0) grid (6,4);
            \fill[blue!40!white] (0,0) rectangle (1,1);
            \fill[blue!40!white] (0,1) rectangle (1,2);
            \fill[blue!40!white] (0,2) rectangle (1,3);
            \fill[blue!40!white] (0,3) rectangle (1,4);
            
            \fill[blue!40!white] (1,0) rectangle (2,1);
            \fill[blue!40!white] (1,1) rectangle (2,2);
            \fill[blue!40!white] (1,2) rectangle (2,3);
            \fill[blue!40!white] (1,3) rectangle (2,4);
            
            \fill[blue!40!white] (2,0) rectangle (3,1);
            
            \fill[blue!40!white] (3,0) rectangle (4,1);
            \fill[blue!40!white] (3,1) rectangle (4,2);
            \fill[blue!40!white] (3,2) rectangle (4,3);
            \fill[blue!40!white] (3,3) rectangle (4,4);
            
            \fill[blue!40!white] (4,0) rectangle (5,1);
            \fill[blue!40!white] (4,1) rectangle (5,2);
			\fill[blue!40!white] (4,2) rectangle (5,3);
			\fill[blue!40!white] (4,3) rectangle (5,4);
			
			\fill[blue!40!white] (5,0) rectangle (6,1);

            \draw[step=1cm,black,very thin] (0, 0) grid (6,4);
        \end{tikzpicture}
		\caption{}
\end{subfigure}%
\begin{subfigure}{.5\textwidth}
  \centering
  \begin{tikzpicture}[scale = 0.5]
            \draw[step=1cm,black,very thin] (0, 0) grid (8,3);
            \fill[blue!40!white] (0,0) rectangle (1,1);
            \fill[blue!40!white] (0,1) rectangle (1,2);
            \fill[blue!40!white] (0,2) rectangle (1,3);

            \fill[blue!40!white] (1,0) rectangle (2,1);
            \fill[blue!40!white] (1,1) rectangle (2,2);
            \fill[blue!40!white] (1,2) rectangle (2,3);
       
            \fill[blue!40!white] (2,0) rectangle (3,1);
            
            \fill[blue!40!white] (3,0) rectangle (4,1);
            \fill[blue!40!white] (3,1) rectangle (4,2);
            \fill[blue!40!white] (3,2) rectangle (4,3);
            
            \fill[blue!40!white] (4,0) rectangle (5,1);
            \fill[blue!40!white] (4,1) rectangle (5,2);
			\fill[blue!40!white] (4,2) rectangle (5,3);

            \fill[blue!40!white] (5,0) rectangle (6,1);
            
			\fill[blue!40!white] (6,0) rectangle (7,1);
            \fill[blue!40!white] (6,1) rectangle (7,2);
            \fill[blue!40!white] (6,2) rectangle (7,3);
            
            \fill[blue!40!white] (7,0) rectangle (8,1);
            \fill[blue!40!white] (7,1) rectangle (8,2);
			\fill[blue!40!white] (7,2) rectangle (8,3);
            \draw[step=1cm,black,very thin] (0, 0) grid (8,3);
        \end{tikzpicture}
  	\caption{}
	\end{subfigure}
	\caption{Finite configurations obtained from the comb pattern.}\label{fig:combpatternexamples}
\end{figure}

\begin{remark}
	Based on its intermediate building density of 2/3, it is expected that this pattern will not be occurring in either efficient nor inefficient configurations when both $m$ and $n$ are large. It does, however, occur in some efficient configurations when either $m$ or $n$ are small. This is confirmed in Table \ref{tablicica}.
\end{remark}

\subsubsection{Brick--comb combination}
We note here that the brick pattern and the comb pattern are compatible in a sense that one can design maximal configurations by alternating between these patterns. This is illustrated in Figure \ref{fig:brick_comb_combo}. For certain dimensions $m$ and $n$, the combination of these patterns is more efficient than each of the patterns separately.

\begin{figure}
\centering
\begin{subfigure}{.5\textwidth}
  \centering
  \begin{tikzpicture}[scale = 0.5]
            \draw[step=1cm,black,very thin] (0, 0) grid (10,5);
            \fill[blue!40!white] (0,0) rectangle (1,1);
            \fill[blue!40!white] (0,1) rectangle (1,2);
            \fill[blue!40!white] (0,2) rectangle (1,3);
            \fill[blue!40!white] (0,3) rectangle (1,4);
            \fill[blue!40!white] (0,4) rectangle (1,5);
            
            \fill[blue!40!white] (1,0) rectangle (2,1);
            \fill[blue!40!white] (1,2) rectangle (2,3);
            \fill[blue!40!white] (1,4) rectangle (2,5);
            
            \fill[blue!40!white] (2,1) rectangle (3,2);
            \fill[blue!40!white] (2,0) rectangle (3,1);
            \fill[blue!40!white] (2,2) rectangle (3,3);
            \fill[blue!40!white] (2,3) rectangle (3,4);
            \fill[blue!40!white] (2,4) rectangle (3,5);
            
            \fill[blue!40!white] (3,1) rectangle (4,2);
            \fill[blue!40!white] (3,3) rectangle (4,4);
            
            \fill[blue!40!white] (4,0) rectangle (5,1);
            \fill[blue!40!white] (4,1) rectangle (5,2);
			\fill[blue!40!white] (4,2) rectangle (5,3);
			\fill[blue!40!white] (4,3) rectangle (5,4);
			\fill[blue!40!white] (4,4) rectangle (5,5);

            \fill[blue!40!white] (5,0) rectangle (6,1);
            \fill[blue!40!white] (5,2) rectangle (6,3);
            \fill[blue!40!white] (5,4) rectangle (6,5);
            
            \fill[blue!40!white] (6,0) rectangle (7,1);
            \fill[blue!40!white] (6,1) rectangle (7,2);
			\fill[blue!40!white] (6,2) rectangle (7,3);
			\fill[blue!40!white] (6,3) rectangle (7,4);
			\fill[blue!40!white] (6,4) rectangle (7,5);
			
			\fill[blue!40!white] (7,1) rectangle (8,2);
            \fill[blue!40!white] (7,3) rectangle (8,4);
            
            \fill[blue!40!white] (8,0) rectangle (9,1);
            \fill[blue!40!white] (8,1) rectangle (9,2);
			\fill[blue!40!white] (8,2) rectangle (9,3);
			\fill[blue!40!white] (8,3) rectangle (9,4);
			\fill[blue!40!white] (8,4) rectangle (9,5);

            \fill[blue!40!white] (9,0) rectangle (10,1);
            \fill[blue!40!white] (9,2) rectangle (10,3);
            \fill[blue!40!white] (9,4) rectangle (10,5);

            \draw[step=1cm,black,very thin] (0, 0) grid (10,5);
        \end{tikzpicture}
  \caption{Brick pattern only.}\label{fig:brickpatternonly}
\end{subfigure}%
\begin{subfigure}{.5\textwidth}
  \centering
  \begin{tikzpicture}[scale = 0.5]
            \draw[step=1cm,black,very thin] (0, 0) grid (10,5);
            \fill[blue!40!white] (0,0) rectangle (1,1);
            \fill[blue!40!white] (0,1) rectangle (1,2);
            \fill[blue!40!white] (0,2) rectangle (1,3);
            \fill[blue!40!white] (0,3) rectangle (1,4);
            \fill[blue!40!white] (0,4) rectangle (1,5);
            
            \fill[blue!40!white] (1,0) rectangle (2,1);
            \fill[blue!40!white] (1,1) rectangle (2,2);
            \fill[blue!40!white] (1,2) rectangle (2,3);
            \fill[blue!40!white] (1,3) rectangle (2,4);
            \fill[blue!40!white] (1,4) rectangle (2,5);
            
            \fill[blue!40!white] (2,0) rectangle (3,1);
           
            \fill[blue!40!white] (3,0) rectangle (4,1);
            \fill[blue!40!white] (3,1) rectangle (4,2);
            \fill[blue!40!white] (3,2) rectangle (4,3);
            \fill[blue!40!white] (3,3) rectangle (4,4);
            \fill[blue!40!white] (3,4) rectangle (4,5);
            
            \fill[blue!40!white] (4,0) rectangle (5,1);
            \fill[blue!40!white] (4,1) rectangle (5,2);
			\fill[blue!40!white] (4,2) rectangle (5,3);
			\fill[blue!40!white] (4,3) rectangle (5,4);
			\fill[blue!40!white] (4,4) rectangle (5,5);

            \fill[blue!40!white] (5,0) rectangle (6,1);
            
            \fill[blue!40!white] (6,0) rectangle (7,1);
            \fill[blue!40!white] (6,1) rectangle (7,2);
			\fill[blue!40!white] (6,2) rectangle (7,3);
			\fill[blue!40!white] (6,3) rectangle (7,4);
			\fill[blue!40!white] (6,4) rectangle (7,5);
			
			\fill[blue!40!white] (7,0) rectangle (8,1);
			\fill[blue!40!white] (7,1) rectangle (8,2);
			\fill[blue!40!white] (7,2) rectangle (8,3);
            \fill[blue!40!white] (7,3) rectangle (8,4);
            \fill[blue!40!white] (7,4) rectangle (8,5);
            
            \fill[blue!40!white] (8,0) rectangle (9,1);

            \fill[blue!40!white] (9,0) rectangle (10,1);
            \fill[blue!40!white] (9,1) rectangle (10,2);
            \fill[blue!40!white] (9,2) rectangle (10,3);
            \fill[blue!40!white] (9,3) rectangle (10,4);
            \fill[blue!40!white] (9,4) rectangle (10,5);

            \draw[step=1cm,black,very thin] (0, 0) grid (10,5);
        \end{tikzpicture}
  		\caption{Comb pattern only.}\label{fig:combonly}
\end{subfigure}
\begin{subfigure}{.5\textwidth}
  \centering
   \begin{tikzpicture}[scale = 0.5]
            \draw[step=1cm,black,very thin] (0, 0) grid (10,5);
            \fill[blue!40!white] (0,0) rectangle (1,1);
            \fill[blue!40!white] (0,1) rectangle (1,2);
            \fill[blue!40!white] (0,2) rectangle (1,3);
            \fill[blue!40!white] (0,3) rectangle (1,4);
            \fill[blue!40!white] (0,4) rectangle (1,5);
            
            \fill[blue!40!white] (1,0) rectangle (2,1);
            \fill[blue!40!white] (1,2) rectangle (2,3);
            \fill[blue!40!white] (1,4) rectangle (2,5);
            
            \fill[blue!40!white] (2,1) rectangle (3,2);
            \fill[blue!40!white] (2,0) rectangle (3,1);
            \fill[blue!40!white] (2,2) rectangle (3,3);
            \fill[blue!40!white] (2,3) rectangle (3,4);
            \fill[blue!40!white] (2,4) rectangle (3,5);
            
            \fill[blue!40!white] (3,1) rectangle (4,2);
            \fill[blue!40!white] (3,3) rectangle (4,4);
            
            \fill[blue!40!white] (4,0) rectangle (5,1);
            \fill[blue!40!white] (4,1) rectangle (5,2);
			\fill[blue!40!white] (4,2) rectangle (5,3);
			\fill[blue!40!white] (4,3) rectangle (5,4);
			\fill[blue!40!white] (4,4) rectangle (5,5);

            \fill[blue!40!white] (5,0) rectangle (6,1);
            \fill[blue!40!white] (5,2) rectangle (6,3);
            \fill[blue!40!white] (5,4) rectangle (6,5);
            
            \fill[blue!40!white] (6,0) rectangle (7,1);
            \fill[blue!40!white] (6,1) rectangle (7,2);
			\fill[blue!40!white] (6,2) rectangle (7,3);
			\fill[blue!40!white] (6,3) rectangle (7,4);
			\fill[blue!40!white] (6,4) rectangle (7,5);
			
			\fill[blue!40!white] (7,0) rectangle (8,1);
            
            \fill[blue!40!white] (8,0) rectangle (9,1);
            \fill[blue!40!white] (8,1) rectangle (9,2);
			\fill[blue!40!white] (8,2) rectangle (9,3);
			\fill[blue!40!white] (8,3) rectangle (9,4);
			\fill[blue!40!white] (8,4) rectangle (9,5);

            \fill[blue!40!white] (9,0) rectangle (10,1);
            \fill[blue!40!white] (9,1) rectangle (10,2);
            \fill[blue!40!white] (9,2) rectangle (10,3);
            \fill[blue!40!white] (9,3) rectangle (10,4);
            \fill[blue!40!white] (9,4) rectangle (10,5);

            \draw[step=1cm,black,very thin] (0, 0) grid (10,5);
        \end{tikzpicture}
        \caption{The combination of the two patterns.}\label{fig:combination}
		\end{subfigure}
		
	\caption{Consider the tract of land with dimensions $5\times 10$. While the configuration with the brick pattern exhibits the occupancy of $38$, as well as the configuration with the comb pattern, the combination, on the other hand, exhibits the occupancy of $39$.}\label{fig:brick_comb_combo}
\end{figure}
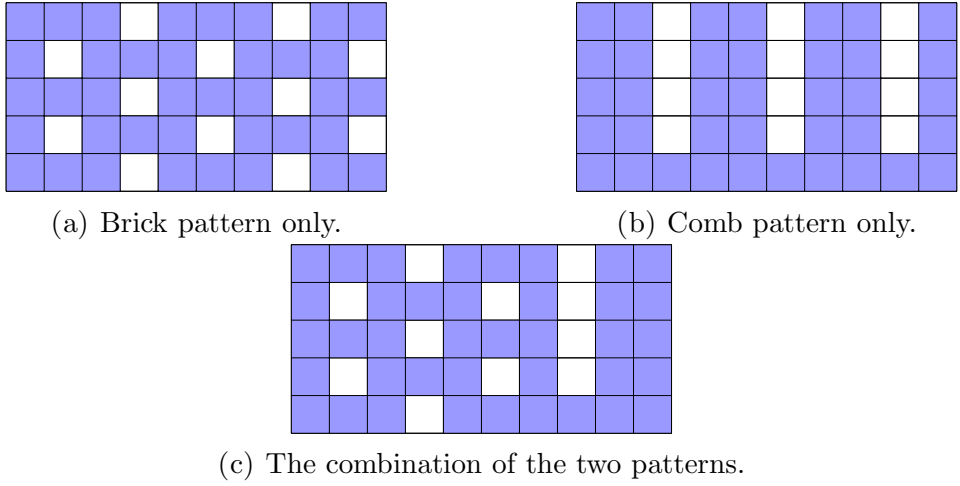

\subsection{Patterns occurring in inefficient configurations}

\subsubsection{Rake pattern}
The rake pattern is a periodic pattern similar to the comb pattern but with bigger gaps. However, the configuration $C^{\rm rake}$ with the rake pattern, see Figure \ref{fig:rake}, yields the minimal possible building density of $1/2$.
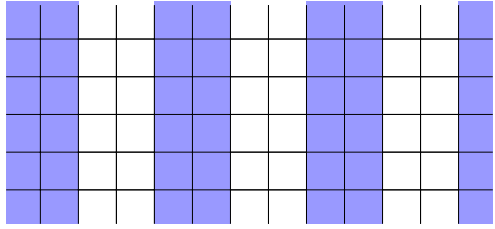
\begin{figure}
        \centering
        \begin{tikzpicture}[scale = 0.5]
            \draw[step=1cm,black,very thin] (-0.9, -0.9) grid (11.9,4.9);
            
            \fill[blue!40!white] (-0.9,-0.9) rectangle (0,0);
            \fill[blue!40!white] (-0.9,0) rectangle (0,1);
            \fill[blue!40!white] (-0.9,1) rectangle (0,2);
            \fill[blue!40!white] (-0.9,2) rectangle (0,3);
            \fill[blue!40!white] (-0.9,3) rectangle (0,4);
            \fill[blue!40!white] (-0.9,4) rectangle (0,5);
            
            \fill[blue!40!white] (0,-0.9) rectangle (1,0);
            \fill[blue!40!white] (0,0) rectangle (1,1);
            \fill[blue!40!white] (0,1) rectangle (1,2);
            \fill[blue!40!white] (0,2) rectangle (1,3);
            \fill[blue!40!white] (0,3) rectangle (1,4);
            \fill[blue!40!white] (0,4) rectangle (1,5);

            \fill[blue!40!white] (3,-0.9) rectangle (4,0);
            \fill[blue!40!white] (3,0) rectangle (4,1);
            \fill[blue!40!white] (3,1) rectangle (4,2);
            \fill[blue!40!white] (3,2) rectangle (4,3);
            \fill[blue!40!white] (3,3) rectangle (4,4);
            \fill[blue!40!white] (3,4) rectangle (4,5);
            
            \fill[blue!40!white] (4,-0.9) rectangle (5,0);
            \fill[blue!40!white] (4,0) rectangle (5,1);
            \fill[blue!40!white] (4,1) rectangle (5,2);
            \fill[blue!40!white] (4,2) rectangle (5,3);
            \fill[blue!40!white] (4,3) rectangle (5,4);
            \fill[blue!40!white] (4,4) rectangle (5,5);

            \fill[blue!40!white] (7,-0.9) rectangle (8,0);
            \fill[blue!40!white] (7,0) rectangle (8,1);
            \fill[blue!40!white] (7,1) rectangle (8,2);
            \fill[blue!40!white] (7,2) rectangle (8,3);
            \fill[blue!40!white] (7,3) rectangle (8,4);
            \fill[blue!40!white] (7,4) rectangle (8,5);
            
            \fill[blue!40!white] (8,-0.9) rectangle (9,0);
            \fill[blue!40!white] (8,0) rectangle (9,1);
            \fill[blue!40!white] (8,1) rectangle (9,2);
            \fill[blue!40!white] (8,2) rectangle (9,3);
            \fill[blue!40!white] (8,3) rectangle (9,4);
            \fill[blue!40!white] (8,4) rectangle (9,5);
            
            \fill[blue!40!white] (11,-0.9) rectangle (11.9,0);
            \fill[blue!40!white] (11,0) rectangle (11.9,1);
            \fill[blue!40!white] (11,1) rectangle (11.9,2);
            \fill[blue!40!white] (11,2) rectangle (11.9,3);
            \fill[blue!40!white] (11,3) rectangle (11.9,4);
            \fill[blue!40!white] (11,4) rectangle (11.9,5);
            
            \draw[step=1cm,black,very thin] (-0.9, -0.9) grid (11.9,4.9);
        \end{tikzpicture}
        \caption{Rake pattern}\label{fig:rake}
    \end{figure}   
    Note that the configuration on $\mathbb{Z}^2$ with the rake pattern has the following two properties:
\begin{itemize}
    \item Each empty lot provides light to only one of its neighbors.
    \item Each occupied lot receives light from only one of its neighbors. 
\end{itemize}  

The finite maximal configurations with the rake pattern are obtained by restricting $C^{\rm rake}$ to a finite grid, and building additional houses on the newly available lots. Among all such restrictions, we choose one that, in the end, yields the lowest occupancy, see Figure \ref{fig:rakepatternexamples}. 

    \begin{definition}[Rake pattern function]
        The function that gives us the occupancy $|C|$ of a configuration with the rake pattern is
        \begin{equation*}
            \mathcal{O}^{\rm rake}(m,n) := \begin{cases}
            n + (m-1)\left(\frac{1}{2}n\right), & \text{if } n \equiv 0 \Mod{4}; \\       
            n + (m-1)\left(\frac{1}{2}(n-1) + 1\right), & \text{if } n \equiv 1 \Mod{4}; \\
            n + (m-1)\left(\frac{1}{2}(n-2) + 2\right), & \text{if } n \equiv 2 \Mod{4}; \\
            n + (m-1)\left(\frac{1}{2}(n-3) + 2\right), & \text{if } n \equiv 3 \Mod{4};
            \end{cases}
        \end{equation*}
    \end{definition}
	where $m,n\ge 2$.
    
        \begin{figure}
\centering
\begin{subfigure}{.5\textwidth}
  \centering
  \begin{tikzpicture}[scale = 0.5]
            \draw[step=1cm,black,very thin] (0, 1) grid (8,5);
            \fill[blue!40!white] (0,1) rectangle (1,2);

            \fill[blue!40!white] (1,1) rectangle (2,2);
            \fill[blue!40!white] (1,2) rectangle (2,3);
            \fill[blue!40!white] (1,3) rectangle (2,4);
            \fill[blue!40!white] (1,4) rectangle (2,5);
            
            \fill[blue!40!white] (2,1) rectangle (3,2);
            \fill[blue!40!white] (2,2) rectangle (3,3);
            \fill[blue!40!white] (2,3) rectangle (3,4);
            \fill[blue!40!white] (2,4) rectangle (3,5);
            
            \fill[blue!40!white] (3,1) rectangle (4,2);
            
            \fill[blue!40!white] (4,1) rectangle (5,2);
            
            \fill[blue!40!white] (5,1) rectangle (6,2);
            \fill[blue!40!white] (5,2) rectangle (6,3);
            \fill[blue!40!white] (5,3) rectangle (6,4);
            \fill[blue!40!white] (5,4) rectangle (6,5);
            
            \fill[blue!40!white] (6,1) rectangle (7,2);
            \fill[blue!40!white] (6,2) rectangle (7,3);
			\fill[blue!40!white] (6,3) rectangle (7,4);
			\fill[blue!40!white] (6,4) rectangle (7,5);
			
            \fill[blue!40!white] (7,1) rectangle (8,2);

            \draw[step=1cm,black,very thin] (0, 1) grid (8,5);
        \end{tikzpicture}
  \caption{}
\end{subfigure}%
\begin{subfigure}{.5\textwidth}
  \centering
  \begin{tikzpicture}[scale = 0.5]
            \draw[step=1cm,black,very thin] (-1, 1) grid (12,4);
            \fill[blue!40!white] (-1,1) rectangle (0,2);
            \fill[blue!40!white] (-1,2) rectangle (0,3);
            \fill[blue!40!white] (-1,3) rectangle (0,4);
            
            \fill[blue!40!white] (0,1) rectangle (1,2);

            \fill[blue!40!white] (1,1) rectangle (2,2);
            \fill[blue!40!white] (1,2) rectangle (2,3);
            \fill[blue!40!white] (1,3) rectangle (2,4);
            
            \fill[blue!40!white] (2,1) rectangle (3,2);
            \fill[blue!40!white] (2,2) rectangle (3,3);
            \fill[blue!40!white] (2,3) rectangle (3,4);
            
            \fill[blue!40!white] (3,1) rectangle (4,2);
            
            \fill[blue!40!white] (4,1) rectangle (5,2);
            
            \fill[blue!40!white] (5,1) rectangle (6,2);
            \fill[blue!40!white] (5,2) rectangle (6,3);
            \fill[blue!40!white] (5,3) rectangle (6,4);
            
            \fill[blue!40!white] (6,1) rectangle (7,2);
            \fill[blue!40!white] (6,2) rectangle (7,3);
			\fill[blue!40!white] (6,3) rectangle (7,4);
			
			\fill[blue!40!white] (7,1) rectangle (8,2);
            
            \fill[blue!40!white] (8,1) rectangle (9,2);
            
            \fill[blue!40!white] (9,1) rectangle (10,2);
            \fill[blue!40!white] (9,2) rectangle (10,3);
            \fill[blue!40!white] (9,3) rectangle (10,4);
            
            \fill[blue!40!white] (10,1) rectangle (11,2);
            \fill[blue!40!white] (10,2) rectangle (11,3);
			\fill[blue!40!white] (10,3) rectangle (11,4);
			
            \fill[blue!40!white] (10,1) rectangle (11,2);
            
			\fill[blue!40!white] (11,1) rectangle (12,2);
            \draw[step=1cm,black,very thin] (-1, 1) grid (12,4);
        \end{tikzpicture}
  \caption{}
\end{subfigure}
\caption{Finite configurations obtained from the rake pattern.}\label{fig:rakepatternexamples}
\end{figure}
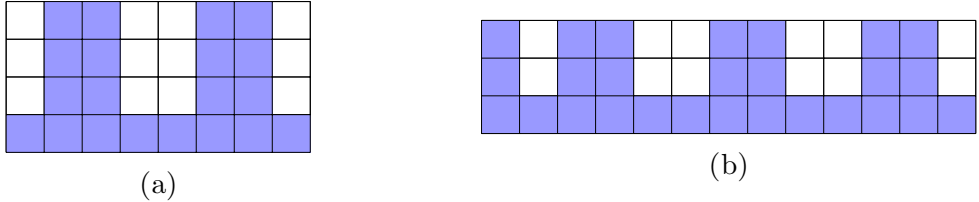

\subsubsection{Stripe pattern}
The stripe pattern is a periodic pattern consisting of horizontal lines of built houses depicted in Figure \ref{fig:stripefigure}. The configuration $C^{\rm stripe}$ on $\mathbb{Z}^2$ with the stripe pattern also yields the minimal possible building density of $1/2$, the same as the rake pattern.
\begin{figure}
        \centering
        \begin{tikzpicture}[scale = 0.5]
            \draw[step=1cm,black,very thin] (-0.9, -0.9) grid (11.9,5.9);
            
            \fill[blue!40!white] (-0.9,-0.9) rectangle (0,0);
            \fill[blue!40!white] (-0.9,1) rectangle (0,2);
            \fill[blue!40!white] (-0.9,3) rectangle (0,4);
            \fill[blue!40!white] (-0.9,5) rectangle (0,5.9);

            \fill[blue!40!white] (0,-0.9) rectangle (1,0);
            \fill[blue!40!white] (0,1) rectangle (1,2);
            \fill[blue!40!white] (0,3) rectangle (1,4);
            \fill[blue!40!white] (0,5) rectangle (1,5.9);

            \fill[blue!40!white] (1,-0.9) rectangle (2,0);
            \fill[blue!40!white] (1,1) rectangle (2,2);
            \fill[blue!40!white] (1,3) rectangle (2,4);
            \fill[blue!40!white] (1,5) rectangle (2,5.9);

            \fill[blue!40!white] (2,-0.9) rectangle (3,0);
            \fill[blue!40!white] (2,1) rectangle (3,2);
            \fill[blue!40!white] (2,3) rectangle (3,4);
            \fill[blue!40!white] (2,5) rectangle (3,5.9);

            \fill[blue!40!white] (3,-0.9) rectangle (4,0);
            \fill[blue!40!white] (3,1) rectangle (4,2);
            \fill[blue!40!white] (3,3) rectangle (4,4);
            \fill[blue!40!white] (3,5) rectangle (4,5.9);

            \fill[blue!40!white] (4,-0.9) rectangle (5,0);
            \fill[blue!40!white] (4,1) rectangle (5,2);
            \fill[blue!40!white] (4,3) rectangle (5,4);
            \fill[blue!40!white] (4,5) rectangle (5,5.9);

            \fill[blue!40!white] (5,-0.9) rectangle (6,0);
            \fill[blue!40!white] (5,1) rectangle (6,2);
            \fill[blue!40!white] (5,3) rectangle (6,4);
            \fill[blue!40!white] (5,5) rectangle (6,5.9);

            \fill[blue!40!white] (6,-0.9) rectangle (7,0);
            \fill[blue!40!white] (6,1) rectangle (7,2);
            \fill[blue!40!white] (6,3) rectangle (7,4);
            \fill[blue!40!white] (6,5) rectangle (7,5.9);

            \fill[blue!40!white] (7,-0.9) rectangle (8,0);
            \fill[blue!40!white] (7,1) rectangle (8,2);
            \fill[blue!40!white] (7,3) rectangle (8,4);
            \fill[blue!40!white] (7,5) rectangle (8,5.9);

            \fill[blue!40!white] (8,-0.9) rectangle (9,0);
            \fill[blue!40!white] (8,1) rectangle (9,2);
            \fill[blue!40!white] (8,3) rectangle (9,4);
            \fill[blue!40!white] (7,5) rectangle (8,5.9);

            \fill[blue!40!white] (8,-0.9) rectangle (9,0);
            \fill[blue!40!white] (8,1) rectangle (9,2);
            \fill[blue!40!white] (8,3) rectangle (9,4);
            \fill[blue!40!white] (8,5) rectangle (9,5.9);

            \fill[blue!40!white] (9,-0.9) rectangle (10,0);
            \fill[blue!40!white] (9,1) rectangle (10,2);
            \fill[blue!40!white] (9,3) rectangle (10,4);
            \fill[blue!40!white] (9,5) rectangle (10,5.9);

            \fill[blue!40!white] (10,-0.9) rectangle (11,0);
            \fill[blue!40!white] (10,1) rectangle (11,2);
            \fill[blue!40!white] (10,3) rectangle (11,4);
            \fill[blue!40!white] (10,5) rectangle (11,5.9);

            \fill[blue!40!white] (11,-0.9) rectangle (11.9,0);
            \fill[blue!40!white] (11,1) rectangle (11.9,2);
            \fill[blue!40!white] (11,3) rectangle (11.9,4);
            \fill[blue!40!white] (11,5) rectangle (11.9,5.9);
            
            \draw[step=1cm,black,very thin] (-0.9, -0.9) grid (11.9,5.9);
        \end{tikzpicture}
        \caption{Stripe pattern.}\label{fig:stripefigure}
    \end{figure}
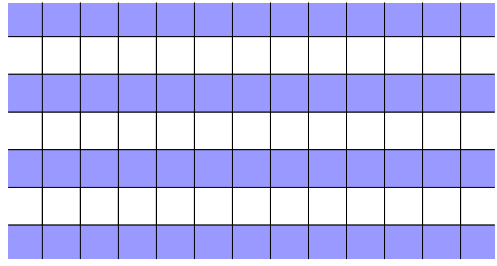   
    Note that the configuration on $\mathbb{Z}^2$ with the stripe pattern has the following two properties:
	\begin{itemize}
	    \item Each empty lot provides light to only one of its neighbors.
	    \item Each occupied lot receives light from only one of its neighbors. 
	\end{itemize}  
\begin{remark}
 Notice that both of the above mentioned properties are shared by both the stripe and rake patterns. However, not all periodic patterns with the building density of $1/2$ obey these two conditions, see the check pattern in Section \ref{subsubsec:check}.
\end{remark}

The finite maximal configurations with the stripe pattern are obtained by restricting $C^{\rm stripe}$ to a finite grid and building additional houses on the newly available lots. Among all such restrictions, we choose one that, in the end, yields the lowest occupancy, see Figure \ref{fig:stripepatternexamples}.

    \begin{definition}[Stripe pattern function]
        The function that gives us the occupancy $|C|$ of a configuration with the stripe pattern is
        \begin{equation*}\small
            \mathcal{O}^{\rm stripe}(m,n) := \begin{cases}
            2m + \FLOOR{\frac{m}{2}}(n-2), & \text{if } m \equiv 0 \Mod2; \\
            2m + \FLOOR{\frac{m}{2}}(n-2)+\left(\frac{1}{2}n\right), & \text{if } m \equiv 1 \Mod2, n \equiv 0 \Mod4; \\
            2m + \FLOOR{\frac{m}{2}}(n-2)+\left(\frac{1}{2}(n-1) + 1\right), & \text{if } m \equiv 1 \Mod2, n \equiv 1 \Mod4; \\
            2m + \FLOOR{\frac{m}{2}}(n-2)+\left(\frac{1}{2}(n-2) + 2\right), & \text{if } m \equiv 1 \Mod2, n \equiv 2 \Mod4; \\
            2m + \FLOOR{\frac{m}{2}}(n-2)+\left(\frac{1}{2}(n-3) + 2\right), & \text{if } m \equiv 1 \Mod2, n \equiv 3 \Mod4;
            \end{cases}
        \end{equation*}
        where $m,n\ge 2$.
    \end{definition} 
    
        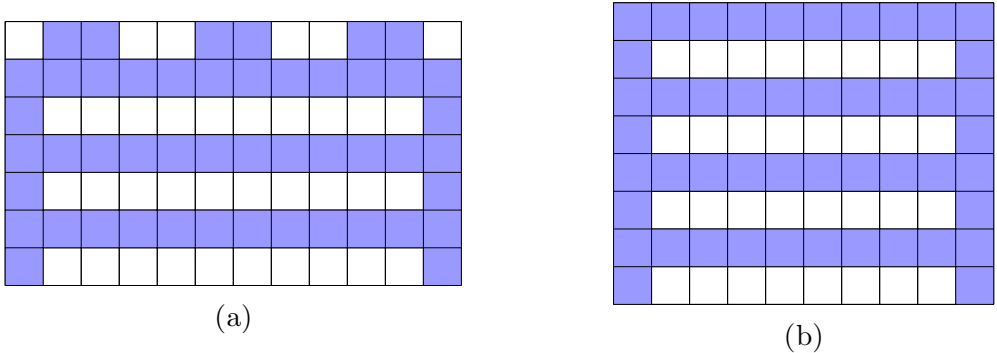
\begin{figure}
\centering
\begin{subfigure}{.5\textwidth}
  \centering
         \begin{tikzpicture}[scale = 0.5]
            \draw[step=1cm,black,very thin] (0, 0) grid (12,7);
            
            \fill[blue!40!white] (0,0) rectangle (1,1);
            \fill[blue!40!white] (0,1) rectangle (1,2);
            \fill[blue!40!white] (0,2) rectangle (1,3);
            \fill[blue!40!white] (0,3) rectangle (1,4);
            \fill[blue!40!white] (0,4) rectangle (1,5);
            \fill[blue!40!white] (0,5) rectangle (1,6);

            \fill[blue!40!white] (1,1) rectangle (2,2);
            \fill[blue!40!white] (1,3) rectangle (2,4);
            \fill[blue!40!white] (1,5) rectangle (2,6);
            \fill[blue!40!white] (1,6) rectangle (2,7);

            \fill[blue!40!white] (2,1) rectangle (3,2);
            \fill[blue!40!white] (2,3) rectangle (3,4);
            \fill[blue!40!white] (2,5) rectangle (3,6);
            \fill[blue!40!white] (2,6) rectangle (3,7);

            \fill[blue!40!white] (3,1) rectangle (4,2);
            \fill[blue!40!white] (3,3) rectangle (4,4);
            \fill[blue!40!white] (3,5) rectangle (4,6);

            \fill[blue!40!white] (4,1) rectangle (5,2);
            \fill[blue!40!white] (4,3) rectangle (5,4);
            \fill[blue!40!white] (4,5) rectangle (5,6);

            \fill[blue!40!white] (5,1) rectangle (6,2);
            \fill[blue!40!white] (5,3) rectangle (6,4);
            \fill[blue!40!white] (5,5) rectangle (6,6);
            \fill[blue!40!white] (5,6) rectangle (6,7);

            \fill[blue!40!white] (6,1) rectangle (7,2);
            \fill[blue!40!white] (6,3) rectangle (7,4);
            \fill[blue!40!white] (6,5) rectangle (7,6);
            \fill[blue!40!white] (6,6) rectangle (7,7);

            \fill[blue!40!white] (7,1) rectangle (8,2);
            \fill[blue!40!white] (7,3) rectangle (8,4);
            \fill[blue!40!white] (7,5) rectangle (8,6);

            \fill[blue!40!white] (8,1) rectangle (9,2);
            \fill[blue!40!white] (8,3) rectangle (9,4);
            \fill[blue!40!white] (7,5) rectangle (8,6);

            \fill[blue!40!white] (8,1) rectangle (9,2);
            \fill[blue!40!white] (8,3) rectangle (9,4);
            \fill[blue!40!white] (8,5) rectangle (9,6);

            \fill[blue!40!white] (9,1) rectangle (10,2);
            \fill[blue!40!white] (9,3) rectangle (10,4);
            \fill[blue!40!white] (9,5) rectangle (10,6);
            \fill[blue!40!white] (9,6) rectangle (10,7);

            \fill[blue!40!white] (10,1) rectangle (11,2);
            \fill[blue!40!white] (10,3) rectangle (11,4);
            \fill[blue!40!white] (10,5) rectangle (11,6);
            \fill[blue!40!white] (10,6) rectangle (11,7);

            \fill[blue!40!white] (11,0) rectangle (12,1);
            \fill[blue!40!white] (11,1) rectangle (12,2);
            \fill[blue!40!white] (11,2) rectangle (12,3);
            \fill[blue!40!white] (11,3) rectangle (12,4);
            \fill[blue!40!white] (11,4) rectangle (12,5);
            \fill[blue!40!white] (11,5) rectangle (12,6);
            
            \draw[step=1cm,black,very thin] (0, 0) grid (12,7);
        \end{tikzpicture}
		\caption{}\label{fig:Stripe_a}
\end{subfigure}%
\begin{subfigure}{.5\textwidth}
  \centering
          \begin{tikzpicture}[scale = 0.5]
            \draw[step=1cm,black,very thin] (0, -2) grid (10,6);

            \fill[blue!40!white] (0,-2) rectangle (1,-1);
            \fill[blue!40!white] (0,-1) rectangle (1,0);
            \fill[blue!40!white] (0,0) rectangle (1,1);
            \fill[blue!40!white] (0,1) rectangle (1,2);
            \fill[blue!40!white] (0,2) rectangle (1,3);
            \fill[blue!40!white] (0,3) rectangle (1,4);
            \fill[blue!40!white] (0,4) rectangle (1,5);
            \fill[blue!40!white] (0,5) rectangle (1,6);

            \fill[blue!40!white] (1,-1) rectangle (2,0);
            \fill[blue!40!white] (1,1) rectangle (2,2);
            \fill[blue!40!white] (1,3) rectangle (2,4);
            \fill[blue!40!white] (1,5) rectangle (2,6);

            \fill[blue!40!white] (2,-1) rectangle (3,0);
            \fill[blue!40!white] (2,1) rectangle (3,2);
            \fill[blue!40!white] (2,3) rectangle (3,4);
            \fill[blue!40!white] (2,5) rectangle (3,6);

            \fill[blue!40!white] (3,-1) rectangle (4,0);
            \fill[blue!40!white] (3,1) rectangle (4,2);
            \fill[blue!40!white] (3,3) rectangle (4,4);
            \fill[blue!40!white] (3,5) rectangle (4,6);

            \fill[blue!40!white] (4,-1) rectangle (5,0);
            \fill[blue!40!white] (4,1) rectangle (5,2);
            \fill[blue!40!white] (4,3) rectangle (5,4);
            \fill[blue!40!white] (4,5) rectangle (5,6);

            \fill[blue!40!white] (5,-1) rectangle (6,0);
            \fill[blue!40!white] (5,1) rectangle (6,2);
            \fill[blue!40!white] (5,3) rectangle (6,4);
            \fill[blue!40!white] (5,5) rectangle (6,6);

            \fill[blue!40!white] (6,-1) rectangle (7,0);
            \fill[blue!40!white] (6,1) rectangle (7,2);
            \fill[blue!40!white] (6,3) rectangle (7,4);
            \fill[blue!40!white] (6,5) rectangle (7,6);

            \fill[blue!40!white] (7,-1) rectangle (8,0);
            \fill[blue!40!white] (7,1) rectangle (8,2);
            \fill[blue!40!white] (7,3) rectangle (8,4);
            \fill[blue!40!white] (7,5) rectangle (8,6);

            \fill[blue!40!white] (8,-1) rectangle (9,0);
            \fill[blue!40!white] (8,1) rectangle (9,2);
            \fill[blue!40!white] (8,3) rectangle (9,4);
            \fill[blue!40!white] (7,5) rectangle (8,6);

            \fill[blue!40!white] (8,-1) rectangle (9,0);
            \fill[blue!40!white] (8,1) rectangle (9,2);
            \fill[blue!40!white] (8,3) rectangle (9,4);
            \fill[blue!40!white] (8,5) rectangle (9,6);

            \fill[blue!40!white] (9,-2) rectangle (10,-1);
            \fill[blue!40!white] (9,-1) rectangle (10,0);
            \fill[blue!40!white] (9,0) rectangle (10,1);
            \fill[blue!40!white] (9,1) rectangle (10,2);
            \fill[blue!40!white] (9,2) rectangle (10,3);
            \fill[blue!40!white] (9,3) rectangle (10,4);
            \fill[blue!40!white] (9,4) rectangle (10,5);
            \fill[blue!40!white] (9,5) rectangle (10,6);

            \draw[step=1cm,black,very thin] (0, -2) grid (10,6);
        \end{tikzpicture}
		  \caption{}
\end{subfigure}
\caption{Finite configurations obtained from the stripe pattern.}\label{fig:stripepatternexamples}
\end{figure}  

\begin{remark}\label{rem:stripe_firstrow}
	Note that, in the case when $m$ is odd, in order to yield the lowest occupancy, we need to fill the first row with a part of the rake pattern, see Figure \ref{fig:Stripe_a}.
\end{remark}

\subsubsection{Rake--stripe combination}\label{subsect:rakeStripe}
Inspired by Remark \ref{rem:stripe_firstrow} and forthcoming Lemma \ref{lm:n+2}, we form a combination of the rake and stripe patterns, see Figure \ref{fig:comb_rake_stripe}. This combination yields lower occupancy than each of the patterns separately. Moreover, in Theorem \ref{tm:Imn}, we show that this combination attains the lowest occupancy possible among all maximal configurations.

\begin{figure}
\centering
\begin{subfigure}{.5\textwidth}
  \centering
  \begin{tikzpicture}[scale = 0.5]
            \draw[step=1cm,black,very thin] (3, 0) grid (11,6);

            \fill[blue!40!white] (3,0) rectangle (4,1);
            
            \fill[blue!40!white] (4,0) rectangle (5,1);
            \fill[blue!40!white] (4,1) rectangle (5,2);
			\fill[blue!40!white] (4,2) rectangle (5,3);
			\fill[blue!40!white] (4,3) rectangle (5,4);
			\fill[blue!40!white] (4,4) rectangle (5,5);
			\fill[blue!40!white] (4,5) rectangle (5,6);

            \fill[blue!40!white] (5,0) rectangle (6,1);
            \fill[blue!40!white] (5,1) rectangle (6,2);
			\fill[blue!40!white] (5,2) rectangle (6,3);
			\fill[blue!40!white] (5,3) rectangle (6,4);
			\fill[blue!40!white] (5,4) rectangle (6,5);
			\fill[blue!40!white] (5,5) rectangle (6,6);
			
			\fill[blue!40!white] (6,0) rectangle (7,1);
			\fill[blue!40!white] (7,0) rectangle (8,1);
			
			\fill[blue!40!white] (8,0) rectangle (9,1);
			\fill[blue!40!white] (8,1) rectangle (9,2);
			\fill[blue!40!white] (8,2) rectangle (9,3);
            \fill[blue!40!white] (8,3) rectangle (9,4);
            \fill[blue!40!white] (8,4) rectangle (9,5);
            \fill[blue!40!white] (8,5) rectangle (9,6);

            \fill[blue!40!white] (9,0) rectangle (10,1);
            \fill[blue!40!white] (9,1) rectangle (10,2);
            \fill[blue!40!white] (9,2) rectangle (10,3);
            \fill[blue!40!white] (9,3) rectangle (10,4);
            \fill[blue!40!white] (9,4) rectangle (10,5);
            \fill[blue!40!white] (9,5) rectangle (10,6);
            
			\fill[blue!40!white] (10,0) rectangle (11,1);
            \draw[step=1cm,black,very thin] (3, 0) grid (11,6);
        \end{tikzpicture}
  \caption{Rake pattern only.}
\end{subfigure}%
\begin{subfigure}{.5\textwidth}
  \centering
  \begin{tikzpicture}[scale = 0.5]
            \draw[step=1cm,black,very thin] (2, 0) grid (10,6);
            \fill[blue!40!white] (2,0) rectangle (3,1);
            \fill[blue!40!white] (2,1) rectangle (3,2);
            \fill[blue!40!white] (2,2) rectangle (3,3);
            \fill[blue!40!white] (2,3) rectangle (3,4);
            \fill[blue!40!white] (2,4) rectangle (3,5);
            \fill[blue!40!white] (2,5) rectangle (3,6);

            \fill[blue!40!white] (3,1) rectangle (4,2);
            \fill[blue!40!white] (3,3) rectangle (4,4);
            \fill[blue!40!white] (3,5) rectangle (4,6);
            
            \fill[blue!40!white] (4,1) rectangle (5,2);
            \fill[blue!40!white] (4,3) rectangle (5,4);
            \fill[blue!40!white] (4,5) rectangle (5,6);

            \fill[blue!40!white] (5,1) rectangle (6,2);
            \fill[blue!40!white] (5,3) rectangle (6,4);
            \fill[blue!40!white] (5,5) rectangle (6,6);
            
            \fill[blue!40!white] (6,1) rectangle (7,2);
            \fill[blue!40!white] (6,3) rectangle (7,4);
            \fill[blue!40!white] (6,5) rectangle (7,6);
			
            \fill[blue!40!white] (7,1) rectangle (8,2);
            \fill[blue!40!white] (7,3) rectangle (8,4);
            \fill[blue!40!white] (7,5) rectangle (8,6);
            
            \fill[blue!40!white] (8,1) rectangle (9,2);
            \fill[blue!40!white] (8,3) rectangle (9,4);
            \fill[blue!40!white] (8,5) rectangle (9,6);

            \fill[blue!40!white] (9,0) rectangle (10,1);
            \fill[blue!40!white] (9,1) rectangle (10,2);
            \fill[blue!40!white] (9,2) rectangle (10,3);
            \fill[blue!40!white] (9,3) rectangle (10,4);
            \fill[blue!40!white] (9,4) rectangle (10,5);
            \fill[blue!40!white] (9,5) rectangle (10,6);

            \draw[step=1cm,black,very thin] (2, 0) grid (10,6);
        \end{tikzpicture}
  \caption{Stripe pattern only.}
\end{subfigure}
\begin{subfigure}{.5\textwidth}
  \centering
   \begin{tikzpicture}[scale = 0.5]
            \draw[step=1cm,black,very thin] (3, 0) grid (11,6);

            \fill[blue!40!white] (3,0) rectangle (4,1);
            \fill[blue!40!white] (3,1) rectangle (4,2);

            \fill[blue!40!white] (4,1) rectangle (5,2);
			\fill[blue!40!white] (4,2) rectangle (5,3);
			\fill[blue!40!white] (4,3) rectangle (5,4);
			\fill[blue!40!white] (4,4) rectangle (5,5);
			\fill[blue!40!white] (4,5) rectangle (5,6);

            \fill[blue!40!white] (5,1) rectangle (6,2);
			\fill[blue!40!white] (5,2) rectangle (6,3);
			\fill[blue!40!white] (5,3) rectangle (6,4);
			\fill[blue!40!white] (5,4) rectangle (6,5);
			\fill[blue!40!white] (5,5) rectangle (6,6);
			
			\fill[blue!40!white] (6,1) rectangle (7,2);
			\fill[blue!40!white] (7,1) rectangle (8,2);

			\fill[blue!40!white] (8,1) rectangle (9,2);
			\fill[blue!40!white] (8,2) rectangle (9,3);
            \fill[blue!40!white] (8,3) rectangle (9,4);
            \fill[blue!40!white] (8,4) rectangle (9,5);
            \fill[blue!40!white] (8,5) rectangle (9,6);

            \fill[blue!40!white] (9,1) rectangle (10,2);
            \fill[blue!40!white] (9,2) rectangle (10,3);
            \fill[blue!40!white] (9,3) rectangle (10,4);
            \fill[blue!40!white] (9,4) rectangle (10,5);
            \fill[blue!40!white] (9,5) rectangle (10,6);
            
			\fill[blue!40!white] (10,0) rectangle (11,1);
			\fill[blue!40!white] (10,1) rectangle (11,2);
            \draw[step=1cm,black,very thin] (3, 0) grid (11,6);
        \end{tikzpicture}
	\caption{The combination of the two patterns.}
	\end{subfigure}
	\caption{Consider the tract of land with dimensions $6 \times 8$. While
		the configuration with the rake pattern exhibits the occupancy of $28$, and the stripe pattern exhibits the occupancy of $30$, the combination,
		on the other hand, exhibits the occupancy of $26$.}\label{fig:comb_rake_stripe}
\end{figure}
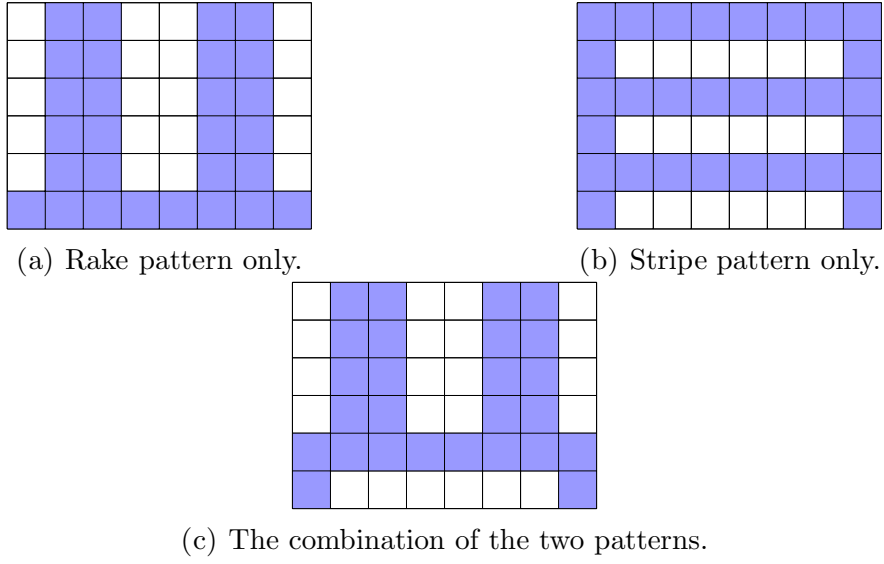
\begin{definition}[Rake--stripe pattern function]
	The function that gives us the occupancy $|C|$ of a configuration with the rake--stripe pattern is
\begin{equation}
\label{eq:formula_rakestripe}
            \mathcal{O}^{\rm rake-stripe}(m,n) := \begin{cases}
            n + 2 + (m-2)\left(\frac{1}{2}n\right), & \text{if } n \equiv 0 \Mod4; \\       
            n + 2 + (m-2)\left(\frac{1}{2}(n-1) + 1\right), &  \text{if } n \equiv 1 \Mod4; \\
            n +2 + (m-2)\left(\frac{1}{2}(n-2) + 2\right), & \text{if } n \equiv 2 \Mod4; \\
            n +2 +(m-2)\left(\frac{1}{2}(n-3) + 2\right), & \text{if } n \equiv 3 \Mod4;
            \end{cases}
        \end{equation}
        when $m,n\ge 2$.
\end{definition}    
    
\subsubsection{Check pattern}\label{subsubsec:check}
Check pattern is a periodic pattern depicted in Figure \ref{fig:checkfigure}. The maximal configuration $C^{\rm check}$ with the check pattern exhibits the building density of $1/2$.
\begin{figure}
        \centering
        \begin{tikzpicture}[scale = 0.5]
            \draw[step=1cm,black,very thin] (-0.9, -0.9) grid (11.9,4.9);
            
            \fill[blue!40!white] (-0.9,-0.9) rectangle (0,0);
            \fill[blue!40!white] (-0.9,1) rectangle (0,2);
            \fill[blue!40!white] (-0.9,3) rectangle (0,4);

            \fill[blue!40!white] (0,0) rectangle (1,1);
            \fill[blue!40!white] (0,2) rectangle (1,3);
            \fill[blue!40!white] (0,4) rectangle (1,4.9);

            \fill[blue!40!white] (1,-0.9) rectangle (2,0);
            \fill[blue!40!white] (1,1) rectangle (2,2);
            \fill[blue!40!white] (1,3) rectangle (2,4);

            \fill[blue!40!white] (2,0) rectangle (3,1);
            \fill[blue!40!white] (2,2) rectangle (3,3);
            \fill[blue!40!white] (2,4) rectangle (3,4.9);

            \fill[blue!40!white] (3,-0.9) rectangle (4,0);
            \fill[blue!40!white] (3,1) rectangle (4,2);
            \fill[blue!40!white] (3,3) rectangle (4,4);

            \fill[blue!40!white] (4,0) rectangle (5,1);
            \fill[blue!40!white] (4,2) rectangle (5,3);
            \fill[blue!40!white] (4,4) rectangle (5,4.9);

            \fill[blue!40!white] (5,-0.9) rectangle (6,0);
            \fill[blue!40!white] (5,1) rectangle (6,2);
            \fill[blue!40!white] (5,3) rectangle (6,4);

            \fill[blue!40!white] (6,0) rectangle (7,1);
            \fill[blue!40!white] (6,2) rectangle (7,3);
            \fill[blue!40!white] (6,4) rectangle (7,4.9);

            \fill[blue!40!white] (7,-0.9) rectangle (8,0);
            \fill[blue!40!white] (7,1) rectangle (8,2);
            \fill[blue!40!white] (7,3) rectangle (8,4);

            \fill[blue!40!white] (8,0) rectangle (9,1);
            \fill[blue!40!white] (8,2) rectangle (9,3);
            \fill[blue!40!white] (8,4) rectangle (9,4.9);

            \fill[blue!40!white] (9,-0.9) rectangle (10,0);
            \fill[blue!40!white] (9,1) rectangle (10,2);
            \fill[blue!40!white] (9,3) rectangle (10,4);

            \fill[blue!40!white] (10,0) rectangle (11,1);
            \fill[blue!40!white] (10,2) rectangle (11,3);
            \fill[blue!40!white] (10,4) rectangle (11,4.9);

            \fill[blue!40!white] (11,-0.9) rectangle (11.9,0);
            \fill[blue!40!white] (11,1) rectangle (11.9,2);
            \fill[blue!40!white] (11,3) rectangle (11.9,4);
            
            \draw[step=1cm,black,very thin] (-0.9, -0.9) grid (11.9,4.9);
        \end{tikzpicture}
        \caption{Check pattern.}\label{fig:checkfigure}
    \end{figure}
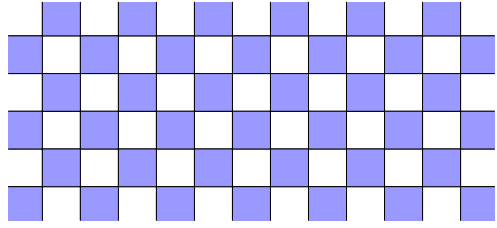   
    Note that the configuration on $\mathbb{Z}^2$ with the check pattern has the following two properties:
	\begin{itemize}
	    \item Each empty lot provides light to exactly three neighbors.
	    \item Each occupied lot receives light from exactly three neighbors.
	\end{itemize}

The finite maximal configurations with the check pattern are obtained by restricting $C^{\rm check}$ to a finite grid and building additional houses on the newly available lots. Among all such restrictions, we choose one that, in the end, yields the lowest occupancy, see Figure \ref{fig:checkpatternexamples}.

    \begin{definition}[Check pattern function]
        The function that gives us the occupancy $|C|$ of a configuration with the check pattern is
        \begin{equation*}
            \mathcal{O}^{\rm check}(m,n) := 
            2(m-1) + n + \FLOOR{\frac{m-1}{2}}\CEIL{\frac{n-2}{2}} + \CEIL{\frac{m-1}{2}}\FLOOR{\frac{n-2}{2}}.
        \end{equation*}
    \end{definition} 
    
        \begin{figure}
\centering
\begin{subfigure}{.5\textwidth}
  \centering
  \begin{tikzpicture}[scale = 0.5]
            \draw[step=1cm,black,very thin] (-1, 0) grid (10,4);
            
            \fill[blue!40!white] (-1,0) rectangle (0,1);
            \fill[blue!40!white] (-1,1) rectangle (0,2);
            \fill[blue!40!white] (-1,2) rectangle (0,3);
            \fill[blue!40!white] (-1,3) rectangle (0,4);

            \fill[blue!40!white] (0,0) rectangle (1,1);
            \fill[blue!40!white] (0,2) rectangle (1,3);

            \fill[blue!40!white] (1,0) rectangle (2,1);
            \fill[blue!40!white] (1,1) rectangle (2,2);
            \fill[blue!40!white] (1,3) rectangle (2,4);

            \fill[blue!40!white] (2,0) rectangle (3,1);
            \fill[blue!40!white] (2,2) rectangle (3,3);

            \fill[blue!40!white] (3,0) rectangle (4,1);
            \fill[blue!40!white] (3,1) rectangle (4,2);
            \fill[blue!40!white] (3,3) rectangle (4,4);

            \fill[blue!40!white] (4,0) rectangle (5,1);
            \fill[blue!40!white] (4,2) rectangle (5,3);

           \fill[blue!40!white] (5,0) rectangle (6,1);
            \fill[blue!40!white] (5,1) rectangle (6,2);
            \fill[blue!40!white] (5,3) rectangle (6,4);

            \fill[blue!40!white] (6,0) rectangle (7,1);
            \fill[blue!40!white] (6,2) rectangle (7,3);

			\fill[blue!40!white] (7,0) rectangle (8,1);
            \fill[blue!40!white] (7,1) rectangle (8,2);
            \fill[blue!40!white] (7,3) rectangle (8,4);

            \fill[blue!40!white] (8,0) rectangle (9,1);
            \fill[blue!40!white] (8,2) rectangle (9,3);

            \fill[blue!40!white] (9,0) rectangle (10,1);
            \fill[blue!40!white] (9,1) rectangle (10,2);
            \fill[blue!40!white] (9,2) rectangle (10,3);
            \fill[blue!40!white] (9,3) rectangle (10,4);

            \draw[step=1cm,black,very thin] (-1, 0) grid (10,4);
        \end{tikzpicture}
		\caption{}
\end{subfigure}%
\begin{subfigure}{.5\textwidth}
  \centering
  \begin{tikzpicture}[scale = 0.5]
            \draw[step=1cm,black,very thin] (-1, -1) grid (5,8);
            \fill[blue!40!white] (-1,-1) rectangle (0,0);
            \fill[blue!40!white] (-1,0) rectangle (0,1);
            \fill[blue!40!white] (-1,1) rectangle (0,2);
            \fill[blue!40!white] (-1,2) rectangle (0,3);
            \fill[blue!40!white] (-1,3) rectangle (0,4);
            \fill[blue!40!white] (-1,4) rectangle (0,5);
            \fill[blue!40!white] (-1,5) rectangle (0,6);
            \fill[blue!40!white] (-1,6) rectangle (0,7);
            \fill[blue!40!white] (-1,7) rectangle (0,8);

            \fill[blue!40!white] (0,-1) rectangle (1,0);
            \fill[blue!40!white] (0,0) rectangle (1,1);
            \fill[blue!40!white] (0,2) rectangle (1,3);
            \fill[blue!40!white] (0,4) rectangle (1,5);
            \fill[blue!40!white] (0,6) rectangle (1,7);

            \fill[blue!40!white] (1,-1) rectangle (2,0);
            \fill[blue!40!white] (1,1) rectangle (2,2);
            \fill[blue!40!white] (1,3) rectangle (2,4);
            \fill[blue!40!white] (1,5) rectangle (2,6);
            \fill[blue!40!white] (1,7) rectangle (2,8);

            \fill[blue!40!white] (2,-1) rectangle (3,0);
            \fill[blue!40!white] (2,0) rectangle (3,1);
            \fill[blue!40!white] (2,2) rectangle (3,3);
            \fill[blue!40!white] (2,4) rectangle (3,5);
            \fill[blue!40!white] (2,6) rectangle (3,7);

            \fill[blue!40!white] (3,-1) rectangle (4,0);
            \fill[blue!40!white] (3,1) rectangle (4,2);
            \fill[blue!40!white] (3,3) rectangle (4,4);
            \fill[blue!40!white] (3,5) rectangle (4,6);
            \fill[blue!40!white] (3,7) rectangle (4,8);

            \fill[blue!40!white] (4,-1) rectangle (5,0);
            \fill[blue!40!white] (4,0) rectangle (5,1);
            \fill[blue!40!white] (4,1) rectangle (5,2);
            \fill[blue!40!white] (4,2) rectangle (5,3);
            \fill[blue!40!white] (4,3) rectangle (5,4);
            \fill[blue!40!white] (4,4) rectangle (5,5);
            \fill[blue!40!white] (4,5) rectangle (5,6);
            \fill[blue!40!white] (4,6) rectangle (5,7);
            \fill[blue!40!white] (4,7) rectangle (5,8);

            \draw[step=1cm,black,very thin] (-1, -1) grid (5,8);
        \end{tikzpicture}
        \caption{}
	\end{subfigure}%
	\caption{Finite configurations obtained from the check pattern.}\label{fig:checkpatternexamples}
\end{figure}
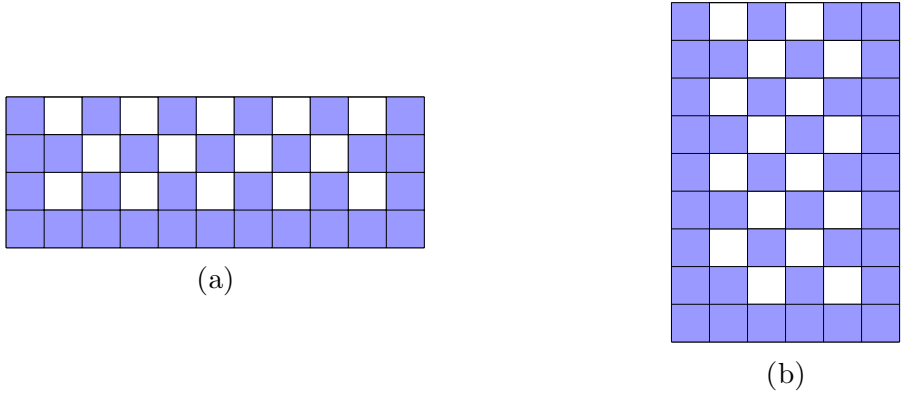

\section{\texorpdfstring{$I_{m,n}$}{I{m,n}} and bounds on \texorpdfstring{$E_{m,n}$}{E{m,n}}}\label{sec:in_eficientformula}
Recall that $I_{m,n}$ and $E_{m,n}$ are, respectively, the lowest and the highest occupancy attained among all of the maximal configurations on the $m\times n$ grid. In Lemma \ref{lm:crudeBound} we have proved
\begin{equation*}
\frac{1}{2}mn \le I_{m,n} \le E_{m,n}\le  \frac{3}{4}mn+\frac{m-1}{2}+\frac{n}{4}.
\end{equation*}
Furthermore, the occupancy of any concrete maximal configuration $C$ provides an upper bound on $I_{m,n}$ and a lower bound on $E_{m,n}$, since
$$I_{m,n} \le |C| \le E_{m,n}, \text{ for any maximal configuration } C.$$

In this section we obtain an explicit formula for $I_{m,n}$ (Theorem \ref{tm:Imn}) and provide improved bounds on $E_{m,n}$.

\subsection{Explicit formula for \texorpdfstring{$I_{m,n}$}{I{m,n}}}

The main result of this section is Theorem \ref{tm:Imn}. First we prove several auxiliary lemmas.
\begin{lemma}\label{lm:n+2}
	If $C$ is any maximal configuration on the $m\times n$ grid, $m,n\ge 2$, then the number of occupied lots in the two southernmost rows is at least $n+2$.
\end{lemma}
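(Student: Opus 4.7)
The plan is to focus on the two southernmost rows, $m-1$ and $m$, and set $a_j := \bbjedan_{(m-1,j)\in C}$ and $b_j := \bbjedan_{(m,j)\in C}$ for $1\le j\le n$, so the claim reduces to $\sum_{j=1}^n (a_j + b_j) \ge n+2$. The decisive observation is that no house in the southernmost row can ever be blocked from sunlight: the lot immediately to its south lies outside the grid, which by the boundary convention provides unobstructed light. Consequently, if $(m,j)$ is empty, maximality can only fail by forbidding the addition of a house at $(m,j)$, and the only neighbor that such an addition could newly block is $(m-1,j)$, since $(m,j\pm 1)$, when they exist, are themselves on the southern edge and equally unblockable. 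Two structural facts follow: $b_1 = b_n = 1$, so $E:=\{j : b_j = 0\}\subseteq\{2,\dots,n-1\}$; and for every $j \in E$, the three lots $(m-1,j-1)$, $(m-1,j)$, $(m-1,j+1)$ are all occupied, i.e.\ $a_{j-1}=a_j=a_{j+1}=1$.

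The principal case $E \ne \emptyset$ then reduces to an elementary interval-covering count. Writing $k := |E|$ and $A := \bigcup_{j \in E}\{j-1,j,j+1\}$, we have $\sum_j a_j \ge |A|$. Consecutive elements of $E$ contribute three-element intervals that overlap in at most two positions, so $|A| \ge 3k - 2(k-1) = k+2$. Since $\sum_j b_j = n - k$, this immediately yields $\sum_j (a_j + b_j) \ge n + 2$.

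The step I expect to be the main obstacle is the complementary case $E = \emptyset$, in which the bottom row is fully occupied and the bound reduces to $\sum_j a_j \ge 2$. Here I would invoke maximality within row $m-1$ itself: if $a_1 = 0$, then a house added at $(m-1,1)$ cannot block itself (west boundary) nor any lot outside row $m-1$, so its addition can only be ruled out by blocking the house at $(m-1,2)$, which forces $a_2=a_3=1$ provided $n\ge 3$; when $n=2$ there is no blockable candidate at all, so $a_1=1$ is forced directly. A symmetric argument runs at the eastern end. A short case split on the values of $a_1$ and $a_n$ then delivers $\sum_j a_j \ge 2$ in every situation, completing the proof.
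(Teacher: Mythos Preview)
Your proof is correct, but the route differs from the paper's. The paper argues by contradiction: assuming at most $n+1$ houses in the two bottom rows, it first secures the four ``corner'' positions $(m,1)$, $(m,n)$, $(m-1,1)$, $(m-1,n)$ --- the last two via a small house-moving trick that preserves maximality and occupancy --- and then applies the pigeonhole principle to the remaining $2n-4$ lots to produce a column $j$ with both $(m-1,j)$ and $(m,j)$ empty, contradicting maximality. Your argument is instead direct and splits on whether the bottom row is full. When it is not, your interval-union bound $|A|\ge k+2$ does the counting cleanly; when it is, you fall back on a boundary analysis at the western and eastern ends of row $m-1$. The paper's proof is shorter and avoids your $E=\emptyset$ case split entirely, at the cost of the slightly ad~hoc house-moving step; your argument, while longer, is more transparent about \emph{where} the $+2$ comes from (the two outermost elements of $A$, or the two boundary columns when $E=\emptyset$), and the interval-covering idea could generalise more readily to variants of the problem.
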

\begin{proof}
	We argue by contradiction. Suppose that there exist a maximal configuration $C$ containing at most $n+1$ occupied lots in rows $m-1$ and $m$. Note that the lot $(m,1)$ is occupied in any maximal configuration, as its only neighbors $(m,2)$ and $(m-1,1)$ always get sunlight as they are on the border. For the same reason, the lot $(m,n)$ is also occupied in any maximal configuration.
	
	We may also assume that the lot $(m-1,1)$ is occupied in $C$. If that was not the case, it would mean that $(m-1,2)$, $(m-1,3)$ and $(m,2)$ are all occupied because of the maximality of $C$. We could then move the house from $(m,2)$ to $(m-1,1)$, and we would still have a maximal configuration, with the same number of occupied lots as before. By similar reasoning, we may also assume that the lot $(m-1,n)$ is occupied in $C$.
	
	Therefore, of the remaining $2n-4$ lots $\{m-1,m\}\times\{2,3,\dots,n-1\}$, at most $n-3$ are occupied. By the pigeonhole principle, there must exist $2\le i\le n-1$ such that both lots $(i,m-1)$ and $(i,m)$ are empty. But this cannot happen in a maximal configuration, as there are no obstructions for $(i,m)$ to be occupied --- a contradiction.
\end{proof}

\begin{lemma}\label{lm:border2}
	Let $C$ be any maximal configuration on the $m\times n$ grid, $m,n\ge 2$, and let $1\le l\le m$ be a positive integer. The number of occupied lots in the restriction of the configuration $C$ to the grid $S=\{1,2,\dots,l\}\times\{1,2\}$ is at least one half of all the lots in $S$, i.e.\ 
	$$|C\cap S|\ge \frac{1}{2}|S| = l.$$
	The same is true if one takes $S$ to be along the eastern border of the grid, instead of the western border, i.e.\ if $S=\{1,2,\dots,l\}\times\{n-1,n\}$.
\end{lemma}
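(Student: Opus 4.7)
The plan is to reduce the statement to a row-by-row claim: for each $i\in\{1,\dots,l\}$, at least one of the lots $(i,1)$ or $(i,2)$ must be occupied in $C$. Summing over the $l$ rows gives $|C\cap S|\ge l=\tfrac12|S|$, as required. So the whole content of the lemma is in this single-row claim.

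To prove the row-by-row claim, I would argue by contradiction: fix $i$ and suppose both $(i,1)$ and $(i,2)$ are empty, and try to add a house at the lot $(i,1)$, then show this addition does not violate permissibility, contradicting the maximality of $C$. Two conditions must be verified. First, the new house at $(i,1)$ itself must not be blocked from sunlight; this is immediate because $(i,1)$ lies on the western boundary of the grid, so the lot $(i,0)$ does not exist and the west side is unobstructed by the stated boundary condition. Second, the new house must not block any existing house. The only neighbors of $(i,1)$ that need checking are $(i,2)$, $(i-1,1)$, and $(i+1,1)$. The lot $(i,2)$ is empty by the standing assumption, so nothing to block there. For $(i\pm 1,1)$, even if occupied, each sits on the western boundary itself and therefore cannot satisfy the east--west--south blocking condition no matter what is placed at $(i,1)$; moreover, $(i-1,1)$ is \emph{north} of $(i,1)$, and blocking only depends on the east, west, and south neighbors, so this is a non-issue on both counts. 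Hence $C\cup\{(i,1)\}$ is still a permissible configuration that strictly contains $C$, contradicting maximality.

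For the eastern-border version of the statement, I would repeat the identical argument with the roles of columns $1,2$ replaced by $n,n-1$, adding a house at $(i,n)$: again the eastern boundary guarantees the new house is not blocked, and the only neighbor that could be blocked by the addition, $(i,n-1)$, is assumed empty while $(i\pm 1,n)$ lie on the eastern boundary and so cannot be blocked either.

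The argument is short and I do not anticipate a genuine obstacle; the only subtlety worth flagging is the double role played by the boundary condition, which is used both to say that the added house receives light and to say that the vertical neighbors along the same boundary column cannot be blocked by the addition. Neither of these facts would hold for an interior column, which is precisely why the lemma concerns only the two columns adjacent to a boundary.
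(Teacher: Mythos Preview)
Your proposal is correct and follows essentially the same approach as the paper: both arguments reduce to the row-by-row claim that $(i,1)$ and $(i,2)$ cannot both be empty, and both exploit the fact that $(i,1)$ lies on the western boundary. The paper phrases this via the characterization of empty lots in a maximal configuration (``the only reason for leaving $(i,1)$ empty is if $(i,2)$ was occupied and $(i,1)$ was its only source of light''), whereas you verify directly that $C\cup\{(i,1)\}$ is permissible; these are two sides of the same coin. One small remark: your check of $(i+1,1)$ is unnecessary, since $(i,1)$ is its \emph{northern} neighbor and thus cannot contribute to blocking it---only the east, west, and south neighbors matter.
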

\begin{proof}
	Let $1\le i\le l$ and consider the lots $(i,1)$ and $(i,2)$. It is not possible that both of these are empty in a maximal configuration, since the only reason for leaving $(i,1)$ empty is if $(i,2)$ was occupied and, further, $(i,1)$ was its only source of light. From here, the claim follows for the western border. Because of the mirror symmetry East$\leftrightarrow$West, the eastern border case also follows.
\end{proof}

\begin{lemma}\label{lm:border3}
	Let $C$ be any maximal configuration on the $m\times n$ grid, $m\ge 2$, $n\ge 3$, and let $1\le l\le m$ be a positive integer. The number of occupied lots in the restriction of the configuration $C$ to the grid $S=\{1,2,\dots,l\}\times\{1,2,3\}$ is at least two thirds of all the lots in $S$, i.e.\ 
	$$|C\cap S|\ge \frac{2}{3}|S| = 2l.$$
	The same is true if one takes $S$ to be along the eastern border of the grid, instead of the western border, i.e.\ if $S=\{1,2,\dots,l\}\times\{n-2,n-1,n\}$.
\end{lemma}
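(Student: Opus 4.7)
The plan is to reduce the claim to a row-by-row count. For each $i \in \{1,\dots,l\}$ let $a_i = |C \cap (\{i\} \times \{1,2,3\})|$ be the number of occupied lots in row $i$ among the three westernmost columns, so that $|C \cap S| = \sum_{i=1}^l a_i$. I will show two structural facts: first, that $a_i \ge 1$ for every row and, second, that a row with $a_i = 1$ must be preceded by a row with $a_{i-1}=3$.

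First, I would rule out the patterns $(C_{i,1},C_{i,2},C_{i,3}) \in \{(0,0,0),(0,1,0),(0,0,1)\}$. In each of these cases the lot $(i,1)$ is empty and can be filled without violating maximality: it sits on the western border, so the new house itself cannot be blocked; its only neighbours whose blockage it could cause are $(i,2)$ and $(i-1,1)$, but $(i,2)$ is either empty or still illuminated from the south (since $(i,3)=0$ in the pattern $(0,1,0)$ implies $(i,2)$'s east is open), and $(i-1,1)$ is itself on the western border so it is never blocked. This contradicts the maximality of $C$. Hence, for every $i$, if $a_i = 1$ then the unique occupied lot is $(i,1)$.

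Next, I would analyse the remaining configuration $(1,0,0)$. Since $C$ is maximal, the lot $(i,2)$ cannot be filled. Because $(i,3)=0$ the hypothetical new house at $(i,2)$ is not itself blocked, so filling it must block some existing house. The only candidate is $(i-1,2)$, since $(i,1)$ is on the western border and $(i,3)$ is empty. For adding $(i,2)$ to block $(i-1,2)$ we need $(i-1,1)$, $(i-1,2)$, and $(i-1,3)$ all occupied, i.e. $a_{i-1}=3$. In particular, this forces $i\ge 2$, so row $1$ must satisfy $a_1 \ge 2$.

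With these two facts in hand, the counting is immediate. Let $N_k$ denote the number of indices $i\in\{1,\dots,l\}$ with $a_i=k$. The map $i \mapsto i-1$ is a well-defined injection from $\{i : a_i=1\}$ into $\{i : a_i=3\}$, so $N_1 \le N_3$, and
\begin{equation*}
|C\cap S| = N_1 + 2N_2 + 3N_3 = 2(N_1+N_2+N_3) + (N_3 - N_1) \ge 2l.
\end{equation*}
The eastern-border case follows by applying the same argument to the east-west mirror reflection of $C$, which is again a maximal configuration by the symmetry of the blocking rule. The only delicate step in the argument is the case analysis identifying $(1,0,0)$ as the unique pattern realising $a_i=1$ and then extracting $a_{i-1}=3$ from it; everything else is pigeonhole.
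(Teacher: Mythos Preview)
Your proof is correct and follows essentially the same route as the paper's: both reduce to showing that if a row has fewer than two occupied lots in columns $1$--$3$ then the pattern must be $(1,0,0)$ and the row above is fully occupied, then aggregate over rows. Your injection $\{i:a_i=1\}\hookrightarrow\{i:a_i=3\}$ makes explicit the counting that the paper leaves as ``From this, the statement of the lemma will follow''; one small slip --- you write ``illuminated from the south'' where you mean east --- but the reasoning is sound.
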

\begin{proof}
	Let $1\le i\le l$ and consider the lots $(i,1)$, $(i,2)$ and $(i,3)$. We will show that either two out of three of these lots must be occupied or, failing that, there must be another row above them (i.e.\ $i>1$), and taking into account the additional three lots in the row above $(i-1,1)$, $(i-1,2)$ and $(i-1,3)$, at least 4 out of the total of 6 lots must be occupied. From this, the statement of the lemma will follow.
	
	First, we assume that $(i,1)$ is empty. Arguing as in the proof of Lemma \ref{lm:border2}, since $C$ is maximal, the lot $(i,2)$ must be occupied, with $(i,1)$ being its only source of light. Therefore, $(i,3)$ must also be occupied and we get two out of three occupied lots.
	
	Otherwise, assume that $(i,1)$ is occupied. If $(i,2)$ or $(i,3)$ is also occupied, we are done, so assume that both $(i,2)$ and $(i,3)$ are empty. Since $C$ is maximal, the only reason for $(i,2)$ being empty is:
	\begin{itemize}
		\item either $(i,2)$ is the only source of light to at least one of its occupied neighbors: $(i,3)$, $(i-1,2)$, or $(i,1)$;
		\item or alternatively, its neighbors to the east, south and west: $(i,3)$, $(i+1,2)$, and $(i,1)$ are all occupied.
	\end{itemize}
	Since $(i,3)$ is assumed empty, and $(i,1)$ has light coming from west border, the only possibility is that $(i-1,2)$ is occupied with $(i,2)$ being its only source of light. This implies that $(i-1,1)$ and $(i-1,3)$ are also occupied. Hence, 4 out of 6 lots $\{i-1,i\}\times\{1,2,3\}$ are occupied. This completes the proof of the western border version of the lemma. Again, the eastern border case follows from the mirror symmetry East$\leftrightarrow$West.
\end{proof}

\begin{lemma}\label{lm:width4col}
	Let $C$ be any maximal configuration on the $m\times n$ grid, $m\ge 2$, $n\ge 4$, and let $1\le l\le m$, $1\le t\le n-3$ be positive integers. The number of occupied lots in the restriction of the configuration $C$ to the grid $S=\{1,2,\dots,l\}\times\{t,t+1,t+2,t+3\}$ is at least one half of all the lots in $S$, i.e.\ 
	$$|C\cap S|\ge \frac{1}{2}|S| = 2l.$$
\end{lemma}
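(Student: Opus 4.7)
The plan is to control occupancy row by row in the strip $S$. For $1 \le i \le l$, set $r_i := |\{j \in \{t, t+1, t+2, t+3\} : (i, j) \in C\}|$, so the goal $|C\cap S| \ge 2l$ becomes $\sum_{i=1}^l r_i \ge 2l$. The key claim I would establish is: \emph{if $r_i \le 1$, then $i \ge 2$ and $r_{i-1} \ge 3$.}

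Granting the claim, the bound follows by a pairing argument. Call row $i$ \emph{deficient} when $r_i \le 1$; the claim implies deficient rows never occur at $i = 1$ and are never adjacent, since the row immediately above a deficient row has $r \ge 3 \ge 2$, hence is itself not deficient. I would pair each deficient row $i$ with $i-1$: the pairs are disjoint, each contributes $r_{i-1} + r_i \ge 3 + 1 = 4$ (or $\ge 4 + 0 = 4$ when $r_i = 0$), and every unpaired row individually satisfies $r_i \ge 2$. Summing yields $\sum_i r_i \ge 2l$.

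For the claim itself I would use maximality in the style of Lemmas \ref{lm:border2} and \ref{lm:border3}: any empty lot $(i, j)$ is either self-blocked upon placement --- which requires $(i, j-1)$, $(i+1, j)$, $(i, j+1)$ all occupied --- or is the unique sunlight source of some occupied neighbor. The strategy is to pick $j \in \{t+1, t+2\}$ with $(i,j)$ empty and at least one of $(i, j \pm 1)$ empty, so that self-blocking is ruled out; then I would argue that neither horizontal neighbor of $(i,j)$ can be the pinned occupied neighbor (each such neighbor is either empty itself or, if occupied, already receives sunlight from its own empty in-strip horizontal neighbor on the opposite side). This forces $(i-1, j)$ to be occupied with $(i, j)$ as its sole source, which in turn forces $(i-1, j-1)$ and $(i-1, j+1)$ occupied, producing three consecutive occupied lots in row $i-1$.

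The main obstacle is the small case analysis on the pattern of row $i$. For $r_i = 0$ the argument applies at both $j = t+1$ and $j = t+2$, yielding the stronger $r_{i-1} = 4$. For $r_i = 1$ with the occupied lot at $(i, t)$ or $(i, t+3)$, one applies the argument at the central empty lot on the opposite side to obtain $r_{i-1} \ge 3$ directly. The more delicate sub-cases are $r_i = 1$ with the occupied lot at $(i, t+1)$ or $(i, t+2)$: here one applies the argument at $(i, t+2)$ or $(i, t+1)$ respectively, and must verify that the adjacent occupied neighbor in row $i$ is not the pinned one, using that its other in-strip horizontal neighbor is empty and therefore already supplies it with sunlight. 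In every sub-case the conclusion $r_{i-1} \ge 3$ follows, completing the proof.
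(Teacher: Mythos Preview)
Your proof is correct and follows essentially the same approach as the paper: both arguments do a case analysis on how many of the four lots in row $i$ are occupied, and in each deficient case pick a central empty lot whose maximality forces three consecutive occupied lots in row $i-1$. Your write-up is in fact a bit more explicit than the paper's --- you isolate the key claim cleanly and spell out the pairing argument showing deficient rows cannot be adjacent, whereas the paper simply asserts ``from this, the statement of the lemma will follow'' --- but the underlying reasoning is the same.
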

\begin{proof}
	Let $1\le i\le l$ and consider the lots $(i,t)$, $(i,t+1)$, $(i,t+2)$ and $(i,t+3)$. We will show that either two out of four of these lots must be occupied or, failing that, there must be another row above them (i.e.\ $i>1$), and taking into account the additional four lots in the row above $(i-1,t)$, $(i-1,t+1)$, $(i-1,t+2)$ and $(i-1,t+3)$, at least four out of the total of eight lots must be occupied. From this, the statement of the lemma will follow.
	
	We may assume that at most one out of four lots $(i,t)$, $(i,t+1)$, $(i,t+2)$ and $(i,t+3)$ is occupied, as otherwise we are done. Let us first assume that either $(i,t)$ or $(i,t+1)$ is the only occupied lot among those four. Since $C$ is maximal, there are exactly four possible reasons for $(i,t+2)$ being empty:
	\begin{itemize}
		\item its neighbors to the east, south and west: $(i,t+3)$, $(i+1,t+2)$, and $(i,t+1)$ are all occupied;
		\item or $(i,t+2)$ is the only source of light to its western neighbor $(i,t+3)$;
		\item or $(i,t+2)$ is the only source of light to its eastern neighbor $(i,t+1)$;
		\item or $(i,t+2)$ is the only source of light to its northern neighbor $(i-1,t+2)$.
	\end{itemize}

	In both cases (whether $(i,t)$ or $(i,t+1)$ is the only occupied lot among $(i,t)$, $(i,t+1)$, $(i,t+2)$ and $(i,t+3)$) the only possibility is that $(i-1,t+2)$ is occupied with $(i,t+2)$ being its only source of light. This implies that $(i-1,t+1)$ and $(i-1,t+3)$ are also occupied. Hence, at least four out of eight lots $\{i-1,i\}\times\{t,t+1,t+2,t+3\}$ are occupied.
	
	A similar argument shows that if $(i,t+2)$ or $(i,t+3)$ is the only occupied lot among $(i,t)$, $(i,t+1)$, $(i,t+2)$ and $(i,t+3)$; then all of the lots $(i-1,t)$, $(i-1,t+1)$ and $(i-1,t+2)$ must also be occupied, and therefore again, at least four out of eight lots $\{i-1,i\}\times\{t,t+1,t+2,t+3\}$ are occupied.
	
	Finally, assume that none of the lots $(i,t)$, $(i,t+1)$, $(i,t+2)$ and $(i,t+3)$ are occupied. Arguing as in the previous cases, we conclude that all the lots: $(i-1,t)$, $(i-1,t+1)$, $(i-1,t+2)$ and $(i-1,t+3)$ must be occupied. Hence, once more, at least four out of eight lots $\{i-1,i\}\times\{t,t+1,t+2,t+3\}$ are occupied.
\end{proof}

With this lemmas, we can improve the lower bound on the number of occupied lots in any maximal configuration and compute, exactly, the occupancies of inefficient configurations $I_{m,n}$.
\begin{theorem}[Sharp lower bound on $I_{m,n}$]\label{tm:Imn}
	If $C$ is any maximal configuration on the $m\times n$ grid, $m,n\ge 2$, then
	$$|C| \ge I_{m,n} \ge \begin{cases}
	\frac{mn}{2}+2, &\text{if } n\equiv 0\Mod{4},\\
	\frac{m(n+2)}{2}, &\text{if } n\equiv 2\Mod{4},\\
	\frac{m(n+1)}{2}+1, &\text{if } n\equiv 1\Mod{2}.\\
	\end{cases}$$
\end{theorem}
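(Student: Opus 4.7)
The plan is to split the grid into the bottom two rows and the top $m-2$ rows, handling the bottom via Lemma~\ref{lm:n+2} (contributing $n+2$ occupied lots) and the top via a column partition into disjoint vertical strips of widths $2$, $3$, or $4$. Each width-$2$ border strip contributes at least $m-2$ by Lemma~\ref{lm:border2}, each width-$3$ border strip at least $2(m-2)$ by Lemma~\ref{lm:border3}, and each width-$4$ strip (interior or on the border) at least $2(m-2)$ by Lemma~\ref{lm:width4col}. The density $\tfrac{2}{3}$ carried by the width-$3$ border strips exceeds the $\tfrac{1}{2}$ baseline of the other strips, and this excess, together with the $+2$ already present in Lemma~\ref{lm:n+2}, is precisely what accounts for the constants on the right-hand side.

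Two degenerate cases are disposed of first. For $n = 2$, every lot lies on the east or west border and hence can never be blocked from sunlight, so the unique maximal configuration is the fully occupied grid with $|C| = 2m = \tfrac{m(n+2)}{2}$. For $m = 2$, only the bottom two rows exist, and Lemma~\ref{lm:n+2} alone gives $|C| \ge n+2$, which one verifies equals the right-hand side in each of the three parity cases of $n$.

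For $m \ge 3$ and $n \ge 3$, the choice of column partition for the top $m-2$ rows depends on $n \bmod 4$: if $n \equiv 0 \Mod{4}$, take $n/4$ width-$4$ strips; if $n \equiv 2 \Mod{4}$ and $n \ge 6$, take a width-$3$ strip on each of the west and east borders together with $(n-6)/4$ width-$4$ strips between them; if $n \equiv 1 \Mod{4}$, take a west width-$3$, an east width-$2$, and $(n-5)/4$ middle width-$4$ strips; if $n \equiv 3 \Mod{4}$, take a west width-$3$ and $(n-3)/4$ width-$4$ strips filling out the rest. In each case the widths sum to exactly $n$, with the middle part possibly empty in the small sub-cases $n \in \{3, 5, 6\}$.

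The main (and only real) obstacle is arithmetic bookkeeping: one has to check that the sum of the strip contributions with the $n+2$ from Lemma~\ref{lm:n+2} collapses, in each residue class, to the claimed closed form. For instance, when $n \equiv 2 \Mod{4}$, the top rows contribute at least $(m-2)\bigl(2\cdot 2 + 2\cdot\tfrac{n-6}{4}\bigr) = (m-2)\tfrac{n+2}{2}$, and adding $n+2$ yields $\tfrac{m(n+2)}{2}$ after a one-line simplification; the other three cases reduce by the same pattern. No deeper structural input is required beyond the four lemmas already established.
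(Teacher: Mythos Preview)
Your proof is correct and follows essentially the same approach as the paper: both split off the two southernmost rows via Lemma~\ref{lm:n+2}, then partition the remaining $m-2$ rows into vertical strips of widths $4$, $3$, and $2$ according to $n\bmod 4$, applying Lemmas~\ref{lm:width4col}, \ref{lm:border3}, and \ref{lm:border2} respectively. The only cosmetic difference is that you treat $m=2$ as a separate degenerate case, whereas the paper lets the upper part $U$ be empty and the same arithmetic go through.
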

\begin{remark}
	These bounds are sharp since they are attained by configurations with the rake--stripe pattern combination introduced in Section \ref{subsect:rakeStripe}. The expressions above match the occupancy of the rake--stripe configurations given in equation \eqref{eq:formula_rakestripe} and are, therefore, the values of $I_{m,n}$.
\end{remark}
\begin{proof}[Proof of Theorem \ref{tm:Imn}]
	If $n=2$, all the lots must be occupied and we see that the bound holds. We now assume $n\ge3$, and consider four cases depending on the remainder after dividing $n$ by $4$. In all four cases, we split the grid in the \emph{lower part} consisting of the two southernmost rows $L=\{m-1,m\}\times[n]$ and the \emph{upper part} consisting of the rest $U=[m-2]\times[n]$. The upper part we further divide depending on the said remainder. Note that, from Lemma \ref{lm:n+2}, $|C\cap L|\ge n+2$.
	
	\emph{Case 1.} $n\equiv 0\Mod{4}$ We divide $U$ in equal blocks of width $4$ and height $m-2$ (Figure \ref{fig:ICase1}). Applying Lemma \ref{lm:width4col} to each block, we get $|C\cap U|\ge \frac{1}{2}|U|=\frac{(m-2)n}{2}$. Thus, $|C|=|C\cap U|+|C\cap L|\ge \frac{mn}{2}+2$.
	
	\emph{Case 2.} $n\equiv 1\Mod{4}$ We divide $U$ in the western block of width $3$, the eastern block of width $2$, and the middle portion (if any) into equal blocks of width $4$ (Figure \ref{fig:ICase2}). Applying Lemma \ref{lm:border3} to the western block, Lemma \ref{lm:border2} to the eastern block, and Lemma \ref{lm:width4col} to the blocks in the middle portion, we get $|C\cap U|\ge \frac{2}{3}(m-2)\cdot3 + \frac{1}{2}(m-2)(n-3)=\frac{(m-2)(n+1)}{2}$. Thus, $|C|=|C\cap U|+|C\cap L|\ge \frac{m(n+1)}{2}+1$.
	
	\emph{Case 3.} $n\equiv 2\Mod{4}$ We divide $U$ in the western block of width $3$, the eastern block of width $3$, and the middle portion (if any) into equal blocks of width $4$ (Figure \ref{fig:ICase3}). Applying Lemma \ref{lm:border3} to the western and eastern block, and Lemma \ref{lm:width4col} to the blocks in the middle portion, we get $|C\cap U|\ge \frac{2}{3}(m-2)\cdot6 + \frac{1}{2}(m-2)(n-6)=\frac{(m-2)(n+2)}{2}$. Thus, $|C|=|C\cap U|+|C\cap L|\ge \frac{m(n+2)}{2}$.
	
	\emph{Case 4.} $n\equiv 3\Mod{4}$ We divide $U$ in the western block of width $3$, and the rest (if any) into equal blocks of width $4$ (Figure \ref{fig:ICase4}). Applying Lemma \ref{lm:border3} to the western block, and Lemma \ref{lm:width4col} to the blocks of width $4$, we get $|C\cap U|\ge \frac{2}{3}(m-2)\cdot3 + \frac{1}{2}(m-2)(n-3)=\frac{(m-2)(n+1)}{2}$. Thus, $|C|=|C\cap U|+|C\cap L|\ge \frac{m(n+1)}{2}+1$.
\end{proof}

\begin{figure}
	\begin{subfigure}{0.49\textwidth}\centering
		\begin{tikzpicture}[scale=0.3,trim left = (A), trim right=(B)]
		\node at (0,0) (A) {};
		\node at (20,0) (B) {};
		\draw (0,0) rectangle (20,2) node[pos=.5] {\tiny$2\times n$};
		\draw (0,2) rectangle (4,8);
		\draw[<->](0.2,5) -- node[above]  {\tiny$4$} (3.8,5);
		\draw (4,2) rectangle (8,8);
		\draw[<->](4.2,5) -- node[above]  {\tiny$4$} (7.8,5);
		\draw (8,2) rectangle (12,8);
		\node at (10,5) {$\dots$};
		\draw (12,2) rectangle (16,8);
		\draw[<->](12.2,5) -- node[above] {\tiny$4$} (15.8,5);
		\draw (16,2) rectangle (20,8);
		\draw[<->](16.2,5) -- node[above] {\tiny$4$} (19.8,5);
		
		\draw[<->](20.8,2.2) -- node[rotate=-90,above]  {\tiny$m-2$} (20.8,7.8);
		\end{tikzpicture}
		\caption{$n\equiv 0\Mod{4}$}\label{fig:ICase1}
	\end{subfigure}
	\begin{subfigure}{0.49\textwidth}\centering
		\begin{tikzpicture}[scale=0.3,trim left = (A), trim right=(B)]
		\node at (0,0) (A) {};
		\node at (17,0) (B) {};
		\draw (0,0) rectangle (17,2) node[pos=.5] {\tiny$2\times n$};
		\draw (0,2) rectangle (3,8);
		\draw[<->](0.2,5) -- node[above]  {\tiny$3$} (2.8,5);
		\draw (3,2) rectangle (7,8);
		\draw[<->](3.2,5) -- node[above]  {\tiny$4$} (6.8,5);
		\draw (7,2) rectangle (11,8);
		\node at (9,5) {$\dots$};
		\draw (11,2) rectangle (15,8);
		\draw[<->](11.2,5) -- node[above] {\tiny$4$} (14.8,5);
		\draw (15,2) rectangle (17,8);
		\draw[<->](15.2,5) -- node[above] {\tiny$2$} (16.8,5);
		
		\draw[<->](17.8,2.2) -- node[rotate=-90,above]  {\tiny$m-2$} (17.8,7.8);
		\end{tikzpicture}
		\caption{$n\equiv 1\Mod{4}$}\label{fig:ICase2}
	\end{subfigure}
	\begin{subfigure}{0.49\textwidth}\centering
		\begin{tikzpicture}[scale=0.3,trim left = (A), trim right=(B)]
		\node at (0,0) (A) {};
		\node at (18,0) (B) {};
		\draw (0,0) rectangle (18,2) node[pos=.5] {\tiny$2\times n$};
		\draw (0,2) rectangle (3,8);
		\draw[<->](0.2,5) -- node[above]  {\tiny$3$} (2.8,5);
		\draw (3,2) rectangle (7,8);
		\draw[<->](3.2,5) -- node[above]  {\tiny$4$} (6.8,5);
		\draw (7,2) rectangle (11,8);
		\node at (9,5) {$\dots$};
		\draw (11,2) rectangle (15,8);
		\draw[<->](11.2,5) -- node[above] {\tiny$4$} (14.8,5);
		\draw (15,2) rectangle (18,8);
		\draw[<->](15.2,5) -- node[above] {\tiny$3$} (17.8,5);
		
		\draw[<->](18.8,2.2) -- node[rotate=-90,above]  {\tiny$m-2$} (18.8,7.8);
		\end{tikzpicture}
		\caption{$n\equiv 2\Mod{4}$}\label{fig:ICase3}
	\end{subfigure}
	\begin{subfigure}{0.49\textwidth}\centering
		\begin{tikzpicture}[scale=0.3,trim left = (A), trim right=(B)]
		\node at (0,0) (A) {};
		\node at (19,0) (B) {};
		\draw (0,0) rectangle (19,2) node[pos=.5] {\tiny$2\times n$};
		\draw (0,2) rectangle (3,8);
		\draw[<->](0.2,5) -- node[above]  {\tiny$3$} (2.8,5);
		\draw (3,2) rectangle (7,8);
		\draw[<->](3.2,5) -- node[above]  {\tiny$4$} (6.8,5);
		\draw (7,2) rectangle (11,8);
		\node at (9,5) {$\dots$};
		\draw (11,2) rectangle (15,8);
		\draw[<->](11.2,5) -- node[above] {\tiny$4$} (14.8,5);
		\draw (15,2) rectangle (19,8);
		\draw[<->](15.2,5) -- node[above] {\tiny$4$} (18.8,5);
		
		\draw[<->](19.8,2.2) -- node[rotate=-90,above]  {\tiny$m-2$} (19.8,7.8);
		\end{tikzpicture}
		\caption{$n\equiv 3\Mod{4}$}\label{fig:ICase4}
	\end{subfigure}
	\caption{The subdivisions of the $m\times n$ grid used in the proof of Theorem \ref{tm:Imn}.}
\end{figure}
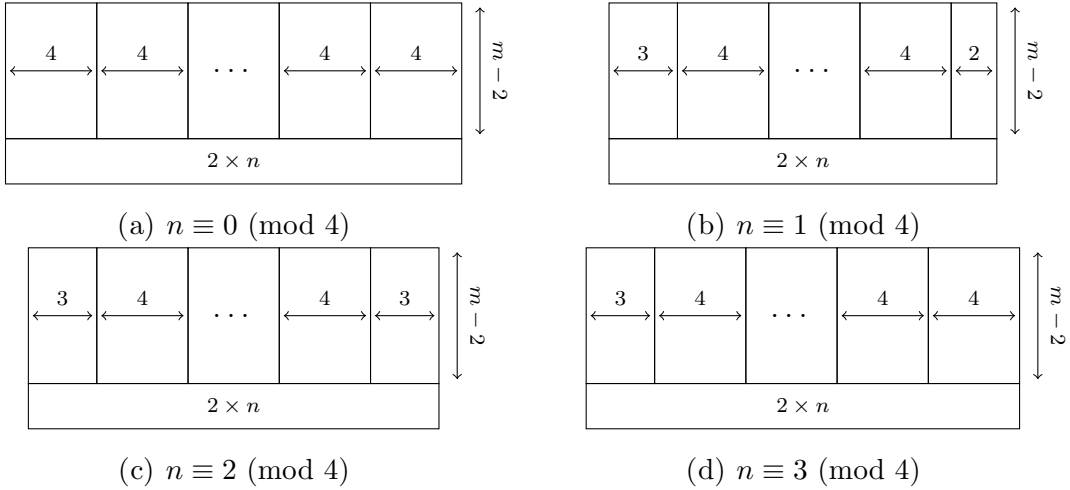

\subsection{Improved bounds for \texorpdfstring{$E_{m,n}$}{E{m,n}}}

We now turn to the efficient configurations and improve bounds on $E_{m,n}$.

\begin{proposition}[Upper bound on $E_{m,n}$] \label{prop:upperBoundEmn}
	If $C$ is any maximal configuration on the $m\times n$ grid, $m,n\ge 2$, then
	\begin{equation*}
	|C| \le E_{m,n} \leq  \begin{cases}
	mn - \FLOOR{\frac{n}{4}} \cdot (m - 1), & \text{if } n \not\equiv 3 \Mod4, \\
	mn - \FLOOR{\frac{n}{4}} \cdot (m - 1) - \FLOOR{\frac{m}{2}}, & \text{if } n \equiv 3 \Mod4.
	\end{cases}
	\end{equation*}
\end{proposition}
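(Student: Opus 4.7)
The plan is to lower-bound the number of empty lots $|C^c|$ by partitioning the columns into disjoint vertical strips and bounding the number of empty cells inside each strip from below. We use strips of width $4$ (each contributing at least $m-1$ empty cells) and, when $n \equiv 3 \pmod{4}$, one extra strip of width $3$ (contributing at least $\lfloor m/2 \rfloor$).

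The core observation is local: if inside a strip the lot $(i,k)$ is occupied and both horizontal neighbors $(i,k-1)$ and $(i,k+1)$ lie \emph{inside the strip} and are also occupied, then the only possible source of light for $(i,k)$ is its south neighbor $(i+1,k)$, which must therefore be empty whenever $i < m$. Applied to a width-$4$ strip $S = [m] \times \{j, j+1, j+2, j+3\}$: both middle columns $j+1$ and $j+2$ are interior to $S$, so a fully occupied row $i < m$ in $S$ forces at least two empty lots in row $i+1$ of $S$. In a width-$3$ strip, the single middle column forces at least one.

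Let $e_i$ denote the number of empty cells of $C$ in row $i$ of the strip. In the width-$4$ case the observation above gives the implication $e_i = 0 \Rightarrow e_{i+1} \ge 2$ for $i < m$, and a simple accounting argument then yields $\sum_{i=1}^m e_i \ge m-1$: each row contributes at least $1$ on its own, unless it contributes $0$, in which case (provided it is not the bottom row) the next row is forced to contribute $\ge 2$, compensating for the deficit. The only row that can ``waste'' a zero is the last one, explaining the $m-1$ rather than $m$ lower bound. In the width-$3$ case the implication $e_i = 0 \Rightarrow e_{i+1} \ge 1$ means the set of rows with $e_i = 0$ contains no two consecutive integers, so it has size at most $\lceil m/2 \rceil$, whence $\sum_{i=1}^m e_i \ge m - \lceil m/2 \rceil = \lfloor m/2 \rfloor$.

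Writing $n = 4k + r$ with $k = \lfloor n/4 \rfloor$ and $r \in \{0,1,2,3\}$: for $r \in \{0,1,2\}$ we partition the columns into $k$ disjoint width-$4$ strips, discard the leftover $r$ columns, and obtain $|C^c| \ge k(m-1)$. For $r = 3$ we additionally take one width-$3$ strip to get $|C^c| \ge k(m-1) + \lfloor m/2 \rfloor$. Rearranging $|C| = mn - |C^c|$ yields the claimed bounds. The main subtlety, and the only step requiring real care, is the amortization producing $m-1$ rather than $m$ in the width-$4$ case; this refinement is necessary (and sharp), since for $n=4$ the value $\mathcal{O}^{\mathrm{brick}}(m,4) = 3m+1$ saturates the bound exactly.
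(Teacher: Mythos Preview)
Your proof is correct and rests on the same local observation as the paper's: a fully occupied $1\times 4$ (resp.\ $1\times 3$) block in a row $i<m$ forces two (resp.\ one) empty lots in the block directly below. The paper packages this as an explicit injection from the $\lfloor n/4\rfloor(m-1)$ horizontal $1\times4$ blocks lying in rows $1,\dots,m-1$ (and, when $n\equiv 3\pmod 4$, from $\lfloor m/2\rfloor$ additional $2\times3$ blocks in the three leftover columns) into the set of empty lots, whereas you group columns into vertical strips and run a row-by-row amortization; the bookkeeping differs but the argument is essentially the same.
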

\begin{proof}
	The idea for this proof is taken from the solution of the problem posed in the 10\textsuperscript{th} Middle European Mathematical Olympiad (see \cite{MEMO}). We begin the proof by dividing the tract of land, not including the bottom row, into $\FLOOR{\frac{n}{4}} \cdot (m - 1)$ adjacent blocks of size $1 \times 4$, starting from the west. Now we claim that there exist an injection from the set of these blocks to the set of empty lots. Consider a single such block. Two situations are possible. Either this block itself contains an empty lot, in which case it can be mapped to its easternmost empty lot, or it is entirely occupied, which implies that the block directly below it now must have at least two empty lots. In that situation, we map the upper block to the westernmost empty lot of the lower block, and the lower block to its easternmost empty lot. This mapping is clearly an injection. In the case $n \equiv 3 \Mod4$, we can extend our injection by dividing the easternmost three columns into $\FLOOR{\frac{m}{2}}$ blocks of size $2\times 3$. Since each of these contains at least one empty lot, we can injectively map those blocks into empty lots as well. This completes the proof.
\end{proof}
\begin{remark}
	Note that with the above proposition and the brick pattern function defined in Section \ref{subsubsect:brickPattern}, we have completely solved the case $m\times 3$, $m\geq 2$. We have
	$$ 2m + \CEIL{\frac{m}{2}} \leq E_{m,3} \leq 3m - \FLOOR{\frac{m}{2}}. $$
\end{remark}

Before proceeding, we recall some well-known facts about the floor and ceiling functions which will be useful latter on:
\begin{gather}
\forall x\in\bbR \quad \CEIL{-x} = -\FLOOR{x},\label{eq:CF1}\\
\forall x\in\bbR \quad \FLOOR{-x} = -\CEIL{x},\label{eq:CF2}\\
\forall x\in\bbR, \forall k\in\bbZ \quad \CEIL{x+k} = \CEIL{x}+k,\label{eq:CF3}\\
\forall x\in\bbR, \forall k\in\bbZ \quad \FLOOR{x+k} = \FLOOR{x}+k,\label{eq:CF4}\\
\forall k,l\in\bbN \quad \FLOOR{\frac{k}{l}} = \CEIL{\frac{k-l+1}{l}}.\label{eq:CF5}
\end{gather}

We will now give an improved upper bound on the size of maximal configurations. The bound is not going to be explicit, but given by the following recurrence relation. For a fixed $n\in\bbN$, we define the sequence $(R_{m,n})_{m\in\bbN\cup\{0\}}$ as follows:
\begin{equation}\label{eq:recurrR}
\begin{cases}
R_{0,n}=0, \quad R_{1,n}=n,\\
R_{m,n}=R_{m-1,n}+n-\FLOOR{\frac{R_{m-1,n}-R_{m-2,n}}{3}}, & \text{if } m\ge 2.\\
\end{cases}
\end{equation}

\begin{remark}
	From \eqref{eq:recurrR} it is not hard to see that
	\begin{multline*}
	R_{m,n} = mn-\underbrace{\FLOOR{\frac{n}{3}}-\FLOOR{\frac{n-\FLOOR{\frac{n}{3}}}{3}} - \dots - \FLOOR{\frac{n-\FLOOR{\frac{n-\FLOOR{\cdots}}{3}}}{3}}}_{m-1 \text{ terms}} =\\
	= mn - f_n(0) - f_n^{2}(0)-\dots -f_n^{m-1}(0),
	\end{multline*}
	where $f_n(x) = \FLOOR{\dfrac{n-x}{3}}$, and $f_n^k$ denotes the composition $\underbrace{f_n\circ f_n \circ \dots \circ f_n}_{k \text{ times}}$.
\end{remark}

\begin{remark}
	Using \eqref{eq:CF1} and \eqref{eq:CF3} we can rewrite the recurrence relation \eqref{eq:recurrR} as
	\begin{multline}\label{eq:recurrMONOTONE}
	R_{m,n}=R_{m-1,n}+n-\FLOOR{\frac{R_{m-1,n}-R_{m-2,n}}{3}} =\\
	=n+\CEIL{R_{m-1,n}+\frac{-R_{m-1,n}+R_{m-2,n}}{3}} =n+\CEIL{\frac{2R_{m-1,n}+R_{m-2,n}}{3}},  \text{ for } m\ge 2,
	\end{multline}
	where we noted that $R_{m-1,n}$ is always an integer.
\end{remark}

We will also need the following lemma.
\begin{lemma}\label{lm:RowAbove}
	Let $m\ge 2$, $n\ge1$, and let $C$ be any permissible configuration on the $m\times n$ grid. If $2\le r\le m$ then
	$$|C\cap (\{r-1\}\times[n])|\le n- \FLOOR{\frac{|C\cap (\{r\}\times[n])|}{3}}.$$
	In other words, if there are $k$ occupied lots in row $r$, then there are at most $n-\FLOOR{\frac{k}{3}}$ occupied lots in the row above it.
\end{lemma}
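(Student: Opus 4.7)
My plan is to work with only the two rows $r-1$ and $r$ and turn the no-blocking condition on (potential) houses in row $r-1$ into a covering inequality between the $0/1$ indicators of the two rows. Write $a_j = 1$ if $(r,j) \in C$ and $a_j = 0$ otherwise, and analogously $b_j$ for row $r-1$; put $k = \sum_{j=1}^n a_j$. What needs to be shown is that the set $Z = \{j \in [n] : b_j = 0\}$ of empty positions in row $r-1$ satisfies $|Z| \ge \FLOOR{k/3}$.

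The only consequence of permissibility that I would invoke is that for an interior column $j \in \{2,\dots,n-1\}$ the conjunction $b_j = b_{j-1} = b_{j+1} = a_j = 1$ is forbidden (this is precisely the statement that $(r-1,j)$, if occupied, would not be blocked from sunlight). Contrapositively: for every $j \in \{2,\dots,n-1\}$ with $a_j = 1$, the triple $\{j-1, j, j+1\}$ must contain an element of $Z$. At the two boundary columns $j \in \{1, n\}$, the lot $(r-1, j)$ always receives sunlight from the open west or east direction, so no corresponding constraint is needed.

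Next, I would set $A' = \{j \in \{2, \dots, n-1\} : a_j = 1\}$ and note that $|A'| \ge k - 2$, since at most the two boundary columns are excluded. The observation above says the triple $\{j-1,j,j+1\}$ meets $Z$ for every $j \in A'$. However, any given $z \in Z$ belongs to at most three such triples, namely those with $j \in \{z-1, z, z+1\}$, so by a simple double count $|A'| \le 3|Z|$. Therefore $|Z| \ge \CEIL{|A'|/3} \ge \CEIL{(k-2)/3}$.

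To conclude, identity \eqref{eq:CF5} with $l = 3$ reads $\FLOOR{k/3} = \CEIL{(k-2)/3}$, so $|Z| \ge \FLOOR{k/3}$ as desired, which is equivalent to the claim. I do not foresee any serious obstacle: the only subtle point is that up to two of the $k$ occupied lots in row $r$ can sit in the boundary columns $j \in \{1, n\}$ and therefore contribute no constraint to row $r-1$, but this deficit of two is precisely what \eqref{eq:CF5} absorbs when converting $\CEIL{(k-2)/3}$ into $\FLOOR{k/3}$.
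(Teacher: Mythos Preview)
Your argument is correct and essentially identical to the paper's: both restrict to occupied lots in row $r$ lying in the interior columns $\{2,\dots,n-1\}$, use permissibility to force at least one empty lot among $\{(r-1,j-1),(r-1,j),(r-1,j+1)\}$ for each such $j$, and then apply a $3$-to-$1$ counting (the paper phrases it as a map, you as a double count of incidences) to obtain $k-2\le 3|Z|$, before converting via \eqref{eq:CF5}.
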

\begin{proof}
	Let $(r,j)$ be an arbitrary occupied lot in row $r\ge 2$ of a permissible configuration $C$, which is not on the eastern or western border ($1<j<n$). It cannot happen that all three lots $(r-1,j-1)$, $(r-1,j)$, and $(r-1,j+1)$ in the row above are occupied, as this would mean that $(r-1,j)$ is blocked from the sun. We are, therefore, able to construct a map $\FJADEF{f}{C\cap(\{r\}\times\{2,\dots,n-1\})}{C^c\cap(\{r-1\}\times[n])}$, which, to each occupied lot $(r,j) \in C\cap(\{r\}\times\{2,\dots,n-1\})$, assigns an empty lot in $\{(r-1,j-1),(r-1,j),(r-1,j+1)\}$. By construction, each empty lot in row $r-1$ can be image of at most 3 occupied lots in row $r$, hence
	$$|C\cap(\{r\}\times\{2,\dots,n-1\})| \le 3|C^c\cap(\{r-1\}\times[n])|.$$
	Since $|C^c\cap(\{r-1\}\times[n])|=n-|C\cap(\{r-1\}\times[n])|$, and $|C\cap(\{r\}\times[n])|-2 \le |C\cap(\{r\}\times\{2,\dots,n-1\})|$, we get
	$$\frac{|C\cap(\{r\}\times[n])|-2}{3} \le n-|C\cap(\{r-1\}\times[n])|.$$
	Therefore,
	$$|C\cap(\{r-1\}\times[n])| \le n-\frac{|C\cap(\{r\}\times[n])|-2}{3}$$
	and since the number on the left hand side is an integer
	$$|C\cap(\{r-1\}\times[n])| \le \FLOOR{n-\frac{|C\cap(\{r\}\times[n])|-2}{3}}.$$
	
	It remains to observe that
	$$\FLOOR{n-\frac{|C\cap(\{r\}\times[n])|-2}{3}} = n - \CEIL{\frac{|C\cap(\{r\}\times[n])|-2}{3}} = n - \FLOOR{\frac{|C\cap(\{r\}\times[n])|}{3}},$$
	where we used \eqref{eq:CF4}, \eqref{eq:CF2}, and \eqref{eq:CF5} for $l=3$.
\end{proof}

\begin{theorem}[Improved upper bound on $E_{m,n}$]\label{tm:recurrBOUND}
	If $C$ is any maximal configuration on the $m\times n$ grid, $m,n\ge 1$, then
	$$|C|\le E_{m,n} \le R_{m,n},$$
	where $R_{m,n}$ is defined by the recurrence relation \eqref{eq:recurrR} (or \eqref{eq:recurrMONOTONE}).
\end{theorem}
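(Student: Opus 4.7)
My plan is to iterate Lemma~\ref{lm:RowAbove} from the southernmost row upward, handling the nested floors through an auxiliary function $\tilde R(k, i)$, defined for $k \in \{0, 1, \dots, n\}$ and $i \in \bbN$ by $\tilde R(k, 1) := k$ and
\[
\tilde R(k, i) := k + \tilde R\!\left(n - \FLOOR{\frac{k}{3}},\, i-1\right), \qquad i \ge 2.
\]
Iterating this recursion starting from $k = n$ reproduces exactly the sequence $d_1 = n$, $d_{j+1} = n - \FLOOR{d_j/3}$ whose partial sums form $R_{m,n}$; in particular $\tilde R(n, m) = R_{m,n}$.

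The first technical step is to prove, by induction on $i$, that $k \mapsto \tilde R(k, i)$ is non-decreasing with every consecutive increment lying in $\{0, 1\}$. Using $\FLOOR{(k+1)/3} - \FLOOR{k/3} \in \{0, 1\}$, the increment $\tilde R(k+1, i) - \tilde R(k, i)$ equals either $1$ (when the two floors are equal) or $1 - \bigl(\tilde R(t, i-1) - \tilde R(t-1, i-1)\bigr)$ for $t = n - \FLOOR{k/3}$, which by the inductive Lipschitz hypothesis lies in $\{0, 1\}$. The second step, proved by induction on $m$, is that any permissible configuration $C$ on the $m \times n$ grid whose southernmost row contains $k$ occupied lots satisfies $|C| \le \tilde R(k, m)$. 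Indeed, the restriction $C'$ of $C$ to its northernmost $m-1$ rows is permissible on an $(m-1) \times n$ grid (removing the bottom row only eliminates obstructions), Lemma~\ref{lm:RowAbove} bounds the southernmost row of $C'$ by $n - \FLOOR{k/3}$, and the inductive hypothesis together with the Step~1 monotonicity gives $|C'| \le \tilde R(n - \FLOOR{k/3}, m-1)$, whence $|C| = k + |C'| \le \tilde R(k, m)$.

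Combining yields $|C| \le \tilde R(k, m) \le \tilde R(n, m) = R_{m,n}$ for any maximal (hence permissible) $C$, which is the statement of the theorem. The main obstacle is the monotonicity claim for $\tilde R(\cdot, i)$: the recursion couples an additive $+k$ term (increasing in $k$) with a nested call on a decreasing input, so the only clean way around the floor function is to carry the stronger unit-Lipschitz property through the induction, which then drives the entire argument.
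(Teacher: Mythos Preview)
Your argument is correct. Both proofs rest on Lemma~\ref{lm:RowAbove} and induct on the number of rows, but the decompositions differ. The paper removes the \emph{top} row: it writes the count in row~2 as $|C\cap S|-|C\cap T|$ with $S=\{2,\dots,m\}\times[n]$ and $T=\{3,\dots,m\}\times[n]$, bounds row~1 via Lemma~\ref{lm:RowAbove}, and then rewrites the recurrence in the ceiling form~\eqref{eq:recurrMONOTONE}, $R_{m,n}=n+\CEIL{(2R_{m-1,n}+R_{m-2,n})/3}$, which is visibly increasing in both arguments; the inductive bounds $|C\cap S|\le R_{m-1,n}$ and $|C\cap T|\le R_{m-2,n}$ then plug in directly. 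You instead peel off the \emph{bottom} row and track the southernmost row count through the auxiliary two-parameter function $\tilde R(k,i)$, so you need the unit-Lipschitz monotonicity of $k\mapsto\tilde R(k,i)$ as a separate lemma to absorb the slack between the actual row count $k'$ and the bound $n-\lfloor k/3\rfloor$. Your route is more mechanical---it unrolls the iteration of Lemma~\ref{lm:RowAbove} explicitly and does not require spotting the ceiling rewrite---while the paper's two-step lookback is shorter and dispenses with any auxiliary function. Both give exactly the same bound $R_{m,n}$.
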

\begin{remark}
	This bound seems to be sharp when $n=1,2,3,4,5\text{ or }8$, regardless of $m$ (see Table \ref{tablicica}), but it is not sharp in general (see Remark \ref{rem:selfimprove} below).
\end{remark}

\begin{proof}[Proof of Theorem \ref{tm:recurrBOUND}]
	Let the number of columns $n\in\bbN$ be fixed. We will argue by induction on the number of rows $m$.
	
	The statement of the theorem is trivially true for $m=0$ and $m=1$ (and any $n\in\bbN$). The claim for $m=0$ seems artificial, but it allows us for a simpler proof. To complete the inductive step, let us assume that $m\ge2$, that the theorem holds for $m-1$ and $m-2$, and that $C$ is a maximal configuration on $m\times n$ grid.
	
	We set $S=\{2,\dots,m\}\times[n]$ and $T=\{3,\dots,m\}\times[n]$ (if $m=2$ then $T=\emptyset$).
	Note that the number of occupied lots in $C$ in row 2 is $|C\cap S| - |C\cap T|$. By Lemma \ref{lm:RowAbove}, the number of occupied lots in row 1 is at most $n-\FLOOR{\frac{|C\cap S| - |C\cap T|}{3}}$. Hence,
	\begin{gather*}
		|C|\le n-\FLOOR{\frac{|C\cap S| - |C\cap T|}{3}} + |C\cap S|= n+\CEIL{\frac{|C\cap T|-|C\cap S|}{3}+|C\cap S|},\\
		|C|\le n+\CEIL{\frac{|C\cap T|+2|C\cap S|}{3}}.
	\end{gather*}
	Above we used \eqref{eq:CF1} and \eqref{eq:CF3}. Note that by assumption $|C\cap S|\le R_{m-1,n}$ and $|C\cap T|\le R_{m-2,n}$, as $|C\cap S|$ and $|C\cap T|$ are, perhaps not maximal but certainly, permissible configurations on grids of dimension $(m-1)\times n$ and $(m-2)\times n$, respectively. Therefore,
	$$|C| \le n + \CEIL{\frac{R_{m-2,n}+2R_{m-1,n}}{3}} = R_{m,n}.$$
	This completes the inductive step and the proof of the theorem.
\end{proof}

\begin{remark}\label{rem:selfimprove}
	The argument used in the proof of Theorem \ref{tm:recurrBOUND} actually shows
	$$E_{m,n} \le n + \CEIL{\frac{E_{m-2,n}+2E_{m-1,n}}{3}}, \text{ for } m\ge 2.$$
	Since the right-hand side of this expression is increasing in both $E_{m-2,n}$ and $E_{m-1,n}$, it is possible to obtain even better bounds on $E_{m,n}$ than those achieved by $R_{m,n}$ simply by computing explicitly (using e.g.\ exhaustive search) few strategically chosen values of $E_{m,n}$, and then letting the recurrence relation take over. The bound is, in a way, self-improving. As an example, in Table \ref{tab:RmnEmn} are listed values of $R_{m,n}$ and $E_{m,n}$, for $2\le m \le 10$ and $n=7$. The values $R_{m,n}$ are clearly not matching $E_{m,n}$, but if we change $R_{3,7}$ into $17$ and $R_{4,7}$ into $22$, and if we update all the remaining values $R_{m,n}$, $m\ge 5$, according to \eqref{eq:recurrMONOTONE}, we obtain the correct values $E_{m,7}$, for $5\le m\le 16$. We were able to check these using computer assisted exhaustive search.
\end{remark}

\begin{table}
	\begin{minipage}{.2\textwidth}
		\begin{tabular}{c|c}
		$m$ & $R_{m,7}$ \\
		\hline\hline 
		2 & 12 \\
		3 & \bf18 \\
		4 & \bf23 \\
		5 & 29 \\
		6 & 34 \\
		7 & 40 \\
		8 & 45 \\
		9 & 51 \\
		10 & 56 \\
		\vdots & \vdots
	\end{tabular}
	\end{minipage}
	\begin{minipage}{.2\textwidth}
		\begin{tabular}{c|c}
		$m$ & $E_{m,7}$ \\
		\hline\hline 
		2 & 12 \\
		3 & 17 \\
		4 & 22 \\
		5 & 28 \\
		6 & 33 \\
		7 & 39 \\
		8 & 44 \\
		9 & 50 \\
		10 & 55 \\
		\vdots & \vdots
		\end{tabular}
	\end{minipage}
	\caption{$R_{m,7}$ and $E_{m,7}$. Changing $R_{3,7}$ to $17$ and $R_{4,7}$ to $22$ and updating all the remaining values $R_{m,n}$, $m\ge 5$,  according to \eqref{eq:recurrMONOTONE}, gives the correct values $E_{m,7}$, for $5\le m\le 16$, which we were able to check using computer assisted exhaustive search.}\label{tab:RmnEmn}
\end{table}

\begin{remark}
	It is straightforward to check, by induction, that the bound on $E_{m,n}$, given by the recurrence relation $R_{m,n}$, in Theorem \ref{tm:recurrBOUND} is better than the one previously obtained in Proposition \ref{prop:upperBoundEmn}.
\end{remark}

\section{IP formulation and explicit solutions}\label{sec:ip}
In this section we describe integer programming (IP) formulations for the problems of finding efficient and inefficient maximal configurations. By solving these problems we gain insight in the shape of the explicit solutions and the associated occupancies. The decision variables consist of a $m \times n$ matrix $X \in \{0,1\}^{m \times n}$ representing the configuration $C$ on the tract of land with dimensions $m \times n$.  

For the sake of elegance of the formulations, we introduce the following notation: 
\begin{definition} Let $C$ be a configuration of houses on a tract of land with dimensions $m \times n$. 
For a lot $(i,j)$ we define the following propositions (if applicable):
    \begin{itemize}
        \item $\mathbb{P}_{i,j}^E:=$  ``$(i,j)$ is empty and it is the only source of light to its eastern immediate neighbor.'' $\iff$ $C_{i,j+1} + C_{i,j+2}+ C_{i+1,j+1} = 3$. \\
        \item $\mathbb{P}_{i,j}^W:=$ ``$(i,j)$ is empty and it is the only source of light to its western immediate neighbor.'' $\iff$  $C_{i,j-1} + C_{i,j-2}+ C_{i+1,j-1} = 3$. \\
        \item $\mathbb{P}_{i,j}^N:=$ ``$(i,j)$ is empty and it is the only source of light to its northern immediate neighbor.'' $\iff$ $C_{i-1,j+1} + C_{i-1,j-1}+ C_{i-1,j} = 3$. \\
        \item $\mathbb{P}_{i,j}^C:=$ ``$(i,j)$ is itself blocked from the sun.'' $\iff$ $C_{i,j+1} + C_{i,j-1}+ C_{i+1,j} = 3$. \\
    \end{itemize}
\end{definition}

\begin{remark}
 Consider a maximal configuration $C$. We stress here that there are two distinct reasons why there would occur $C_{i,j} = 0$
 \begin{itemize}
 \item either $(i,j)$ is itself blocked from the sun,
 \item or $(i,j)$ is the only source of light to one of its immediate neighbors. 
 \end{itemize}
\end{remark}
This can be summarized with the following characterization of maximal configurations pertaining on the algebraic formulas:
\begin{lemma}
\label{characterizationofmaximalconfs}
Let $C$ be a maximal configuration. For every $1<i<m$, $2<j<n-1$ we have: 
\begin{equation*}
 C_{i,j} = 0 \Rightarrow \, 
\mathbb{P}_{i,j}^E \vee \, \mathbb{P}_{i,j}^W \vee \mathbb{P}_{i,j}^N \vee \mathbb{P}_{i,j}^C.
\end{equation*}
with the similar formulas for the indices $i = 1,m$, $j = 1,2,n-1,n$ while keeping in mind the existence of its neighbors.
\end{lemma}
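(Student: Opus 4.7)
The plan is to argue directly from the definition of maximality. Since $C$ is maximal and $C_{i,j}=0$, the augmented configuration $C\cup\{(i,j)\}$ must be impermissible. Hence at least one house in $C\cup\{(i,j)\}$ is blocked from the sun. Call such a blocked house $h$, and split into cases based on the identity of $h$.

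First, suppose $h=(i,j)$, i.e., the newly added house itself is blocked. By definition this means that its eastern, western, and southern neighbors are all occupied in $C$, i.e., $C_{i,j+1}+C_{i,j-1}+C_{i+1,j}=3$, which is exactly $\mathbb{P}_{i,j}^C$. Otherwise, $h\in C$ was already a house that was \emph{not} blocked in $C$ but \emph{becomes} blocked after adding $(i,j)$. The only way an existing house can newly lose its sunlight is if one of its three sun-source neighbors (east, west, south) became occupied by the addition; so $(i,j)$ must lie to the east, west, or south of $h$. Equivalently, $h$ lies to the west, east, or north of $(i,j)$, i.e., $h\in\{(i,j-1),\,(i,j+1),\,(i-1,j)\}$. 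Note that the southern neighbor $(i+1,j)$ cannot be the newly blocked house, since sunlight reaches it from east, west and south, none of which are altered by the addition at $(i,j)$.

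In each of the three remaining subcases I would argue identically. For instance, if $h=(i,j-1)$, then $h$ is occupied ($C_{i,j-1}=1$), it was not blocked in $C$, and becomes blocked once $(i,j)$ — its eastern sun-source — is added. This forces both of its other two sun-sources to have already been occupied, namely the western neighbor $(i,j-2)$ and the southern neighbor $(i+1,j-1)$. Hence $C_{i,j-1}+C_{i,j-2}+C_{i+1,j-1}=3$, which is exactly $\mathbb{P}_{i,j}^W$. The symmetric arguments with $h=(i,j+1)$ and $h=(i-1,j)$ yield $\mathbb{P}_{i,j}^E$ and $\mathbb{P}_{i,j}^N$ respectively.

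Combining the cases gives $\mathbb{P}_{i,j}^C\vee\mathbb{P}_{i,j}^W\vee\mathbb{P}_{i,j}^E\vee\mathbb{P}_{i,j}^N$, as required. The hypothesis $1<i<m$ and $2<j<n-1$ guarantees that all the neighbors referenced in the definitions of the propositions $\mathbb{P}_{i,j}^\bullet$ actually lie inside the grid, so no boundary fix-up is needed; the excerpt already remarks that analogous formulas apply along the boundary with only the existing neighbors taken into account. There is no real obstacle here beyond keeping track of which neighbor can block which house: the key observation is simply that sunlight travels from the south, east and west, so adding a house at $(i,j)$ cannot affect the sun-access of its southern neighbor, only of its east, west, north neighbors, and of itself.
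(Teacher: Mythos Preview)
Your proof is correct and follows exactly the reasoning the paper intends: the lemma is stated without an explicit proof, but the preceding remark (and the lower-bound argument in Lemma~\ref{lm:crudeBound}) spell out precisely the same case split---either $(i,j)$ would itself be blocked, or $(i,j)$ is the sole light source for one of its east, north, or west neighbors. Your observation that the southern neighbor $(i+1,j)$ cannot be the newly blocked house is the only point the paper leaves entirely implicit, and you handle it cleanly.
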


\noindent\textbf{Efficient configurations (IP formulation).}
Let $m,n \in \mathbb{N}$ be the dimensions of the tract of land. The IP formulation of the problem of finding efficient maximal configurations is the following:
\begin{equation}
\label{eq:efficientmip}
\begin{array}{ll}
\text{maximize}  & \displaystyle\sum\limits_{i=1}^{m}\displaystyle\sum\limits_{j=1}^{n} X_{i,j} \\
\text{subject to}& \neg \mathbb{P}_{i,j}^C,  \;\; i= 1,\dots m-1, \;\; j=2 ,\dots, n-1\\
    &  X_{i,j} \in \{0,1\}, \;\; i=1, \dots m, \;\; j=1 ,\dots, n
\end{array}
\end{equation}

\noindent\textbf{Inefficient configurations (IP formulation).}
Let $m,n \in \mathbb{N}$ be the dimensions of the tract of land. The IP formulation of the problem of finding inefficient maximal configurations is the following:
\begin{equation}
\label{eq:inefficientmip}
\begin{array}{ll}
\text{minimize}  & \displaystyle\sum\limits_{i=1}^{m}\displaystyle\sum\limits_{j=1}^{n} X_{i,j}\\
\text{subject to}&  X_{i,j} = 0 \Rightarrow 
\mathbb{P}_{i,j}^E \vee \, \mathbb{P}_{i,j}^W \vee \mathbb{P}_{i,j}^N \vee \mathbb{P}_{i,j}^C, \\
    &  X_{i,j} \in \{0,1\}, \;\; i=1, \dots m, \;\; j=1 ,\dots, n
\end{array}
\end{equation}

\begin{remark}
	The constraint $\neg \mathbb{P}_{i,j}^C$ in \eqref{eq:efficientmip} ensures that the resulting configuration is permissible, while the constraint $X_{i,j} = 0 \Rightarrow \mathbb{P}_{i,j}^E \vee \, \mathbb{P}_{i,j}^W \vee \mathbb{P}_{i,j}^N \vee \mathbb{P}_{i,j}^C$ in \eqref{eq:inefficientmip} ensures that the resulting configuration is maximal. Although we can put both constraints in both optimization problems, it turns out that it is sufficient to use only one of them in each.
\end{remark}

In order to rephrase the above logical constraints into the equivalent algebraic constraints according to the standard integer programming convention, one can consult \cite[Chapters 8 and 9]{Williams}. However, we have used IBM ILOG CPLEX for solving the above integer programs, and IBM Optimization Programming Language (OPL) supports the definitions of constraints in the form of logical constraints. The reference guide for integer programming with IBM ILOG CPLEX can be found in \cite{cplex2009v12}.

In Theorem \ref{tm:Imn} we have concluded that the rake--stripe pattern configurations are examples of inefficient configurations. However, we still do not know examples of efficient configurations for all $m,n \in  \mathbb{N}$. The only insight into the set of efficient configurations comes from the explicit solutions to the problem \eqref{eq:efficientmip}. We have gathered the computed occupancies of efficient configurations in Table \ref{tablicica}.

\begin{table}
    \begin{tabularx}{1.05\textwidth} { 
  | >{\centering\arraybackslash}X 
  || >{\centering\arraybackslash}X 
  | >{\centering\arraybackslash}X 
  | >{\centering\arraybackslash}X 
  | >{\centering\arraybackslash}X 
  | >{\centering\arraybackslash}X 
  | >{\centering\arraybackslash}X 
  | >{\centering\arraybackslash}X 
  | >{\centering\arraybackslash}X 
  | >{\centering\arraybackslash}X 
  | >{\centering\arraybackslash}X 
  | >{\centering\arraybackslash}X 
  | >{\centering\arraybackslash}X 
  | >{\centering\arraybackslash}X 
  | >{\centering\arraybackslash}X 
  | >{\centering\arraybackslash}X 
  | >{\centering\arraybackslash}X 
  | >{\raggedleft\arraybackslash}X | }
  \hline
\Small{m/n} & 2 & 3 & 4 & 5 & 6 & 7 & 8 & 9 & 10 & 11 & 12 & 13 & 14 & 15 & 16 \\
\hline
\hline
2&	\cellcolor{ForestGreen!60} 4&	\cellcolor{ForestGreen!60}5&	\cellcolor{ForestGreen!60}7&	\cellcolor{blue!25}9&	\cellcolor{ForestGreen!60}10&	\cellcolor{blue!25}12&	\cellcolor{blue!25}14&	\cellcolor{blue!25}15&	\cellcolor{blue!25}17&	\cellcolor{blue!25}19&	\cellcolor{blue!25}20&	\cellcolor{blue!25}22&	\cellcolor{blue!25}24&	\cellcolor{blue!25}25&	\cellcolor{blue!25}27\\
\hline
3&	\cellcolor{blue!25} 6&	\cellcolor{yellow!25} 8&\cellcolor{ForestGreen!60}10&	\cellcolor{blue!25}13&	15&	\cellcolor{ForestGreen!60}17&\cellcolor{blue!25}	20&	22&	\cellcolor{blue!25}24&\cellcolor{blue!25}	27&	29&	\cellcolor{blue!25}31&	\cellcolor{blue!25}34&36&	\cellcolor{blue!25}38\\
\hline
4&	\cellcolor{blue!25} 8&	\cellcolor{yellow!25}10&\cellcolor{ForestGreen!60}	13&\cellcolor{blue!25}	17&	\cellcolor{yellow!25}19&	\cellcolor{ForestGreen!60}22&\cellcolor{blue!25}	26&\cellcolor{yellow!25}	28&	\cellcolor{ForestGreen!60}31&\cellcolor{blue!25}	35&\cellcolor{yellow!25}	37&	\cellcolor{ForestGreen!60}40&	\cellcolor{blue!25}44&\cellcolor{yellow!25}	46&	\cellcolor{ForestGreen!60}49\\
\hline
5&	\cellcolor{blue!25} 10&	\cellcolor{yellow!25}13&	\cellcolor{ForestGreen!60}16&	\cellcolor{blue!25}21&	24&	\cellcolor{yellow!25}28&	\cellcolor{blue!25}32&\cellcolor{yellow!25}	35&	39&\cellcolor{ForestGreen!60}	43&	47&\cellcolor{yellow!25}	50&	\cellcolor{blue!25}54&\cellcolor{yellow!25}	58&	62\\
\hline
6&	\cellcolor{blue!25} 12&\cellcolor{yellow!25}	15&	\cellcolor{ForestGreen!60}19&	\cellcolor{blue!25}25&	\cellcolor{yellow!25}28&\cellcolor{yellow!25}	33&	\cellcolor{blue!25}38&	\cellcolor{yellow!25}42&\cellcolor{yellow!25}	46&	\cellcolor{ForestGreen!60}51&	\cellcolor{yellow!25}55&\cellcolor{yellow!25}	60&	\cellcolor{ForestGreen!60}64&\cellcolor{yellow!25}	69&	\cellcolor{yellow!25}73\\
\hline
7&	\cellcolor{blue!25} 14&	\cellcolor{yellow!25}18&	\cellcolor{ForestGreen!60}22&	\cellcolor{blue!25}29&	33&\cellcolor{yellow!25}	39&\cellcolor{blue!25}	44&	\cellcolor{yellow!25}49&	54&	\cellcolor{yellow!25}60&	65&\cellcolor{yellow!25}	70&	75&\cellcolor{yellow!25}	81&	86\\
\hline
8&	\cellcolor{blue!25} 16&	\cellcolor{yellow!25}20&	\cellcolor{ForestGreen!60}25&	\cellcolor{blue!25}33&	\cellcolor{yellow!25}37&\cellcolor{yellow!25}	44&	\cellcolor{blue!25}50&\cellcolor{yellow!25}	56&	\cellcolor{yellow!25}61&\cellcolor{yellow!25}	68&	\cellcolor{yellow!25}73&\cellcolor{yellow!25}	80&	\cellcolor{yellow!25}85&\cellcolor{yellow!25}	92&	\cellcolor{yellow!25}97\\
\hline
9&	\cellcolor{blue!25} 18&	\cellcolor{yellow!25}23&	\cellcolor{ForestGreen!60}28&	\cellcolor{blue!25}37&	42&\cellcolor{yellow!25}	50&	\cellcolor{blue!25}56&\cellcolor{yellow!25}	63&	69&\cellcolor{yellow!25}	77&	83&\cellcolor{yellow!25}	90&	96&\cellcolor{yellow!25}	104 & 110\\
\hline
10&	\cellcolor{blue!25} 20&\cellcolor{yellow!25}	25&\cellcolor{ForestGreen!60}	31&	\cellcolor{blue!25}41&\cellcolor{yellow!25}	46&\cellcolor{yellow!25}	55&	\cellcolor{blue!25}62&\cellcolor{yellow!25}	70&	\cellcolor{yellow!25}76&\cellcolor{yellow!25}	85&	\cellcolor{yellow!25}91&\cellcolor{yellow!25}	100 & \cellcolor{yellow!25}106 &\cellcolor{yellow!25} 115	&\cellcolor{yellow!25}121\\
\hline
11&	\cellcolor{blue!25} 22&	\cellcolor{yellow!25}28&	\cellcolor{ForestGreen!60}34&\cellcolor{blue!25}	45&	51&\cellcolor{yellow!25}	61&	\cellcolor{blue!25}68&\cellcolor{yellow!25}	77&	84&	\cellcolor{yellow!25}94&	101 &\cellcolor{yellow!25} 110 &	117	& \cellcolor{yellow!25}127 & 134\\
\hline
12&	\cellcolor{blue!25}24&\cellcolor{yellow!25}	30&	\cellcolor{ForestGreen!60}37&\cellcolor{blue!25}	49&\cellcolor{yellow!25}	55&	\cellcolor{yellow!25}66&	\cellcolor{blue!25}74&	\cellcolor{yellow!25}84&\cellcolor{yellow!25}	91&\cellcolor{yellow!25}	102 & \cellcolor{yellow!25}109 &\cellcolor{yellow!25}	120 &\cellcolor{yellow!25} 127 &\cellcolor{yellow!25}	138	& \cellcolor{yellow!25}145\\
\hline
13&	\cellcolor{blue!25} 26&	\cellcolor{yellow!25}33&	\cellcolor{ForestGreen!60}40&	\cellcolor{blue!25}53&	60&\cellcolor{yellow!25}	72&\cellcolor{blue!25}	80&\cellcolor{yellow!25}	91&	99&	\cellcolor{yellow!25}111 & 119 &\cellcolor{yellow!25}	130 & 138 &	\cellcolor{yellow!25}150	& 158\\
\hline
14&	\cellcolor{blue!25} 28&\cellcolor{yellow!25}	35&	\cellcolor{ForestGreen!60}43&	\cellcolor{blue!25}57&	\cellcolor{yellow!25}64&\cellcolor{yellow!25}	77&	\cellcolor{blue!25}86&\cellcolor{yellow!25}	98&	\cellcolor{yellow!25}106&\cellcolor{yellow!25} 119& \cellcolor{yellow!25}127& \cellcolor{yellow!25}140&\cellcolor{yellow!25}	148&\cellcolor{yellow!25} 161 &\cellcolor{yellow!25}169\\
\hline
15&	\cellcolor{blue!25} 30&	\cellcolor{yellow!25}38&	\cellcolor{ForestGreen!60}46&	\cellcolor{blue!25}61&	69&\cellcolor{yellow!25}	83&	\cellcolor{blue!25}92&\cellcolor{yellow!25}	105& 114&\cellcolor{yellow!25} 128&	137&\cellcolor{yellow!25} 150& 159&\cellcolor{yellow!25} 173	&182\\
\hline
16&	\cellcolor{blue!25} 32&\cellcolor{yellow!25}	40&	\cellcolor{ForestGreen!60}49&\cellcolor{blue!25}	65&	\cellcolor{yellow!25}73&\cellcolor{yellow!25}	88&\cellcolor{blue!25}	98&	\cellcolor{yellow!25}112& \cellcolor{yellow!25}121& \cellcolor{yellow!25}136&\cellcolor{yellow!25}	145& \cellcolor{yellow!25}160&\cellcolor{yellow!25}	169& \cellcolor{yellow!25}184 &\cellcolor{yellow!25}193 \\
\hline
\end{tabularx}
\begin{tabular}{ll}
\textcolor{yellow!25}{${\blacksquare}$}  Efficient configurations obtained by the brick pattern \\
\textcolor{blue!25}{${\blacksquare}$}   Efficient configurations obtained by the comb pattern \\
\textcolor{ForestGreen!60}{${\blacksquare}$}   Efficient configurations obtained by both the brick and the comb pattern
\end{tabular}
\caption{Table of occupancies of efficient configurations as calculated by IBM ILOG CPLEX.}\label{tablicica}
\end{table}

Note that the maximum occupancy is often attained by a number of different configurations. In some cases, the IP solutions resemble configurations with one of the special patterns introduced before, such as the brick or comb patterns, see Figure \ref{fig:representationexamples}.

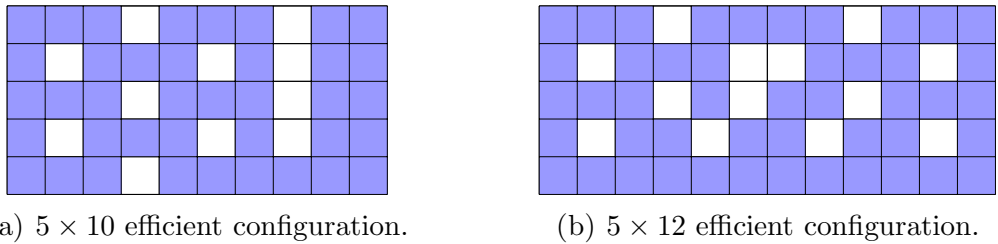
\begin{figure}
	\centering
	\begin{subfigure}{.5\textwidth}
		\centering
		\begin{tikzpicture}[scale = 0.5]
		\draw[step=1cm,black,very thin] (0, 0) grid (10,5);
		\fill[blue!40!white] (0,0) rectangle (1,1);
		\fill[blue!40!white] (0,1) rectangle (1,2);
		\fill[blue!40!white] (0,2) rectangle (1,3);
		\fill[blue!40!white] (0,3) rectangle (1,4);
		\fill[blue!40!white] (0,4) rectangle (1,5);
		
		\fill[blue!40!white] (1,0) rectangle (2,1);
		\fill[blue!40!white] (1,2) rectangle (2,3);
		\fill[blue!40!white] (1,4) rectangle (2,5);
		
		\fill[blue!40!white] (2,1) rectangle (3,2);
		\fill[blue!40!white] (2,0) rectangle (3,1);
		\fill[blue!40!white] (2,2) rectangle (3,3);
		\fill[blue!40!white] (2,3) rectangle (3,4);
		\fill[blue!40!white] (2,4) rectangle (3,5);
		
		\fill[blue!40!white] (3,1) rectangle (4,2);
		\fill[blue!40!white] (3,3) rectangle (4,4);
		
		\fill[blue!40!white] (4,0) rectangle (5,1);
		\fill[blue!40!white] (4,1) rectangle (5,2);
		\fill[blue!40!white] (4,2) rectangle (5,3);
		\fill[blue!40!white] (4,3) rectangle (5,4);
		\fill[blue!40!white] (4,4) rectangle (5,5);
		
		\fill[blue!40!white] (5,0) rectangle (6,1);
		\fill[blue!40!white] (5,2) rectangle (6,3);
		\fill[blue!40!white] (5,4) rectangle (6,5);
		
		\fill[blue!40!white] (6,0) rectangle (7,1);
		\fill[blue!40!white] (6,1) rectangle (7,2);
		\fill[blue!40!white] (6,2) rectangle (7,3);
		\fill[blue!40!white] (6,3) rectangle (7,4);
		\fill[blue!40!white] (6,4) rectangle (7,5);
		
		\fill[blue!40!white] (7,0) rectangle (8,1);
		
		\fill[blue!40!white] (8,0) rectangle (9,1);
		\fill[blue!40!white] (8,1) rectangle (9,2);
		\fill[blue!40!white] (8,2) rectangle (9,3);
		\fill[blue!40!white] (8,3) rectangle (9,4);
		\fill[blue!40!white] (8,4) rectangle (9,5);
		
		\fill[blue!40!white] (9,0) rectangle (10,1);
		\fill[blue!40!white] (9,1) rectangle (10,2);
		\fill[blue!40!white] (9,2) rectangle (10,3);
		\fill[blue!40!white] (9,3) rectangle (10,4);
		\fill[blue!40!white] (9,4) rectangle (10,5);

		\draw[step=1cm,black,very thin] (0, 0) grid (10,5);
		\end{tikzpicture}
		\caption{$5\times 10$ efficient configuration.}\label{fig:efficientBrickComb}
	\end{subfigure}%
	\begin{subfigure}{.5\textwidth}
		\centering
		\begin{tikzpicture}[scale = 0.5]
		\draw[step=1cm,black,very thin] (0, 0) grid (12,5);
		\fill[blue!40!white] (0,0) rectangle (1,1);
		\fill[blue!40!white] (0,1) rectangle (1,2);
		\fill[blue!40!white] (0,2) rectangle (1,3);
		\fill[blue!40!white] (0,3) rectangle (1,4);
		\fill[blue!40!white] (0,4) rectangle (1,5);
		
		\fill[blue!40!white] (1,0) rectangle (2,1);
		\fill[blue!40!white] (1,2) rectangle (2,3);
		\fill[blue!40!white] (1,4) rectangle (2,5);
		
		\fill[blue!40!white] (2,1) rectangle (3,2);
		\fill[blue!40!white] (2,0) rectangle (3,1);
		\fill[blue!40!white] (2,2) rectangle (3,3);
		\fill[blue!40!white] (2,3) rectangle (3,4);
		\fill[blue!40!white] (2,4) rectangle (3,5);
		
		\fill[blue!40!white] (3,0) rectangle (4,1);
		\fill[blue!40!white] (3,1) rectangle (4,2);
		\fill[blue!40!white] (3,3) rectangle (4,4);
		
		\fill[blue!40!white] (4,0) rectangle (5,1);
		\fill[blue!40!white] (4,2) rectangle (5,3);
		\fill[blue!40!white] (4,3) rectangle (5,4);
		\fill[blue!40!white] (4,4) rectangle (5,5);
		
		\fill[blue!40!white] (5,0) rectangle (6,1);
		\fill[blue!40!white] (5,1) rectangle (6,2);
		\fill[blue!40!white] (5,4) rectangle (6,5);
		
		\fill[blue!40!white] (6,0) rectangle (7,1);
		\fill[blue!40!white] (6,1) rectangle (7,2);
		\fill[blue!40!white] (6,2) rectangle (7,3);
		\fill[blue!40!white] (6,4) rectangle (7,5);
		
		\fill[blue!40!white] (7,0) rectangle (8,1);
		\fill[blue!40!white] (7,2) rectangle (8,3);
		\fill[blue!40!white] (7,3) rectangle (8,4);
		\fill[blue!40!white] (7,4) rectangle (8,5);
		
		\fill[blue!40!white] (8,0) rectangle (9,1);
		\fill[blue!40!white] (8,1) rectangle (9,2);
		\fill[blue!40!white] (8,3) rectangle (9,4);
		
		\fill[blue!40!white] (9,0) rectangle (10,1);
		\fill[blue!40!white] (9,1) rectangle (10,2);
		\fill[blue!40!white] (9,2) rectangle (10,3);
		\fill[blue!40!white] (9,3) rectangle (10,4);
		\fill[blue!40!white] (9,4) rectangle (10,5);
		
		\fill[blue!40!white] (10,0) rectangle (11,1);
		\fill[blue!40!white] (10,2) rectangle (11,3);
		\fill[blue!40!white] (10,4) rectangle (11,5);
		
		\fill[blue!40!white] (11,0) rectangle (12,1);
		\fill[blue!40!white] (11,1) rectangle (12,2);
		\fill[blue!40!white] (11,2) rectangle (12,3);
		\fill[blue!40!white] (11,3) rectangle (12,4);
		\fill[blue!40!white] (11,4) rectangle (12,5);

		\draw[step=1cm,black,very thin] (0, 0) grid (12,5);
		\end{tikzpicture}
		\caption{$5\times 12$ efficient configuration.}\label{fig:efficientNonSymmetric}
		\label{efficientexample2}
	\end{subfigure}%
	\caption{Examples of efficient configurations.}
\end{figure}

It is worth noting that a large portion of calculated occupancies of the efficient configurations could not be obtained by either brick nor comb patterns. Some of them are obtained by a certain combination of the two, see Figure \ref{fig:efficientBrickComb}. Some are obtained by the configurations with yet unexplained underlying patterns, see Figure \ref{fig:efficientNonSymmetric}.

\begin{figure}
\centering
\begin{subfigure}{.5\textwidth}
  \centering
  \begin{tikzpicture}[scale = 0.5]
            \draw[step=1cm,black,very thin] (0, 0) grid (14,14);
            \fill[blue!40!white] (0,0) rectangle (1,1);
            \fill[blue!40!white] (0,1) rectangle (1,2);
            \fill[blue!40!white] (0,2) rectangle (1,3);
            \fill[blue!40!white] (0,3) rectangle (1,4);
            \fill[blue!40!white] (0,4) rectangle (1,5);
            \fill[blue!40!white] (0,5) rectangle (1,6);
            \fill[blue!40!white] (0,6) rectangle (1,7);
            \fill[blue!40!white] (0,7) rectangle (1,8);
            \fill[blue!40!white] (0,8) rectangle (1,9);
            \fill[blue!40!white] (0,9) rectangle (1,10);
            \fill[blue!40!white] (0,10) rectangle (1,11);
            \fill[blue!40!white] (0,11) rectangle (1,12);
            \fill[blue!40!white] (0,12) rectangle (1,13);
            \fill[blue!40!white] (0,13) rectangle (1,14);

            \fill[blue!40!white] (1,0) rectangle (2,1);
            \fill[blue!40!white] (1,1) rectangle (2,2);
            \fill[blue!40!white] (1,2) rectangle (2,3);
            \fill[blue!40!white] (1,5) rectangle (2,6);
            \fill[blue!40!white] (1,7) rectangle (2,8);
            \fill[blue!40!white] (1,9) rectangle (2,10);
            \fill[blue!40!white] (1,11) rectangle (2,12);
            \fill[blue!40!white] (1,12) rectangle (2,13);
            \fill[blue!40!white] (1,13) rectangle (2,14);

            \fill[blue!40!white] (2,0) rectangle (3,1);
            \fill[blue!40!white] (2,3) rectangle (3,4);
            \fill[blue!40!white] (2,4) rectangle (3,5);
            \fill[blue!40!white] (2,5) rectangle (3,6);
            \fill[blue!40!white] (2,6) rectangle (3,7);
            \fill[blue!40!white] (2,7) rectangle (3,8);
            \fill[blue!40!white] (2,8) rectangle (3,9);
            \fill[blue!40!white] (2,9) rectangle (3,10);
            \fill[blue!40!white] (2,10) rectangle (3,11);
            \fill[blue!40!white] (2,11) rectangle (3,12);

            \fill[blue!40!white] (3,0) rectangle (4,1);
            \fill[blue!40!white] (3,1) rectangle (4,2);
            \fill[blue!40!white] (3,2) rectangle (4,3);
            \fill[blue!40!white] (3,4) rectangle (4,5);
            \fill[blue!40!white] (3,6) rectangle (4,7);
            \fill[blue!40!white] (3,8) rectangle (4,9);
            \fill[blue!40!white] (3,10) rectangle (4,11);
            \fill[blue!40!white] (3,12) rectangle (4,13);
            \fill[blue!40!white] (3,13) rectangle (4,14);

            \fill[blue!40!white] (4,0) rectangle (5,1);
            \fill[blue!40!white] (4,1) rectangle (5,2);
            \fill[blue!40!white] (4,2) rectangle (5,3);
            \fill[blue!40!white] (4,3) rectangle (5,4);
            \fill[blue!40!white] (4,4) rectangle (5,5);
            \fill[blue!40!white] (4,5) rectangle (5,6);
            \fill[blue!40!white] (4,6) rectangle (5,7);
            \fill[blue!40!white] (4,7) rectangle (5,8);
            \fill[blue!40!white] (4,8) rectangle (5,9);
            \fill[blue!40!white] (4,9) rectangle (5,10);
            \fill[blue!40!white] (4,10) rectangle (5,11);
            \fill[blue!40!white] (4,11) rectangle (5,12);
            \fill[blue!40!white] (4,13) rectangle (5,14);

            \fill[blue!40!white] (5,0) rectangle (6,1);
            \fill[blue!40!white] (5,3) rectangle (6,4);
            \fill[blue!40!white] (5,5) rectangle (6,6);
            \fill[blue!40!white] (5,7) rectangle (6,8);
            \fill[blue!40!white] (5,9) rectangle (6,10);
            \fill[blue!40!white] (5,11) rectangle (6,12);
            \fill[blue!40!white] (5,12) rectangle (6,13);
            \fill[blue!40!white] (5,13) rectangle (6,14);

            \fill[blue!40!white] (6,0) rectangle (7,1);
            \fill[blue!40!white] (6,1) rectangle (7,2);
            \fill[blue!40!white] (6,2) rectangle (7,3);
            \fill[blue!40!white] (6,3) rectangle (7,4);
            \fill[blue!40!white] (6,4) rectangle (7,5);
            \fill[blue!40!white] (6,5) rectangle (7,6);
            \fill[blue!40!white] (6,6) rectangle (7,7);
            \fill[blue!40!white] (6,7) rectangle (7,8);
            \fill[blue!40!white] (6,8) rectangle (7,9);
            \fill[blue!40!white] (6,9) rectangle (7,10);
            \fill[blue!40!white] (6,10) rectangle (7,11);
            \fill[blue!40!white] (6,12) rectangle (7,13);
			
            \fill[blue!40!white] (7,0) rectangle (8,1);
            \fill[blue!40!white] (7,1) rectangle (8,2);
            \fill[blue!40!white] (7,2) rectangle (8,3);
            \fill[blue!40!white] (7,4) rectangle (8,5);
            \fill[blue!40!white] (7,6) rectangle (8,7);
            \fill[blue!40!white] (7,8) rectangle (8,9);
            \fill[blue!40!white] (7,10) rectangle (8,11);
            \fill[blue!40!white] (7,11) rectangle (8,12);
            \fill[blue!40!white] (7,12) rectangle (8,13);
            \fill[blue!40!white] (7,13) rectangle (8,14);

            \fill[blue!40!white] (8,0) rectangle (9,1);
            \fill[blue!40!white] (8,3) rectangle (9,4);
            \fill[blue!40!white] (8,4) rectangle (9,5);
            \fill[blue!40!white] (8,5) rectangle (9,6);
            \fill[blue!40!white] (8,6) rectangle (9,7);
            \fill[blue!40!white] (8,7) rectangle (9,8);
            \fill[blue!40!white] (8,8) rectangle (9,9);
            \fill[blue!40!white] (8,9) rectangle (9,10);
            \fill[blue!40!white] (8,11) rectangle (9,12);
            \fill[blue!40!white] (8,13) rectangle (9,14);

            \fill[blue!40!white] (9,0) rectangle (10,1);
            \fill[blue!40!white] (9,1) rectangle (10,2);
            \fill[blue!40!white] (9,3) rectangle (10,4);
            \fill[blue!40!white] (9,5) rectangle (10,6);
            \fill[blue!40!white] (9,7) rectangle (10,8);
            \fill[blue!40!white] (9,9) rectangle (10,10);
            \fill[blue!40!white] (9,10) rectangle (10,11);
            \fill[blue!40!white] (9,11) rectangle (10,12);
            \fill[blue!40!white] (9,12) rectangle (10,13);
            \fill[blue!40!white] (9,13) rectangle (10,14);

            \fill[blue!40!white] (10,0) rectangle (11,1);
            \fill[blue!40!white] (10,1) rectangle (11,2);
            \fill[blue!40!white] (10,2) rectangle (11,3);
            \fill[blue!40!white] (10,3) rectangle (11,4);
            \fill[blue!40!white] (10,4) rectangle (11,5);
            \fill[blue!40!white] (10,5) rectangle (11,6);
            \fill[blue!40!white] (10,6) rectangle (11,7);
            \fill[blue!40!white] (10,7) rectangle (11,8);
            \fill[blue!40!white] (10,8) rectangle (11,9);
            \fill[blue!40!white] (10,10) rectangle (11,11);
            \fill[blue!40!white] (10,12) rectangle (11,13);

            \fill[blue!40!white] (11,0) rectangle (12,1);
            \fill[blue!40!white] (11,4) rectangle (12,5);
            \fill[blue!40!white] (11,6) rectangle (12,7);
            \fill[blue!40!white] (11,8) rectangle (12,9);
            \fill[blue!40!white] (11,9) rectangle (12,10);
            \fill[blue!40!white] (11,10) rectangle (12,11);
            \fill[blue!40!white] (11,11) rectangle (12,12);
            \fill[blue!40!white] (11,12) rectangle (12,13);
            \fill[blue!40!white] (11,13) rectangle (12,14);

            \fill[blue!40!white] (12,0) rectangle (13,1);
            \fill[blue!40!white] (12,1) rectangle (13,2);
            \fill[blue!40!white] (12,2) rectangle (13,3);
            \fill[blue!40!white] (12,4) rectangle (13,5);
            \fill[blue!40!white] (12,6) rectangle (13,7);
            \fill[blue!40!white] (12,7) rectangle (13,8);
            \fill[blue!40!white] (12,9) rectangle (13,10);
            \fill[blue!40!white] (12,11) rectangle (13,12);
            \fill[blue!40!white] (12,13) rectangle (13,14);
            
            \fill[blue!40!white] (13,0) rectangle (14,1);
            \fill[blue!40!white] (13,1) rectangle (14,2);
            \fill[blue!40!white] (13,2) rectangle (14,3);
            \fill[blue!40!white] (13,3) rectangle (14,4);
            \fill[blue!40!white] (13,4) rectangle (14,5);
            \fill[blue!40!white] (13,5) rectangle (14,6);
            \fill[blue!40!white] (13,6) rectangle (14,7);
            \fill[blue!40!white] (13,7) rectangle (14,8);
            \fill[blue!40!white] (13,8) rectangle (14,9);
            \fill[blue!40!white] (13,9) rectangle (14,10);
            \fill[blue!40!white] (13,10) rectangle (14,11);
            \fill[blue!40!white] (13,11) rectangle (14,12);
            \fill[blue!40!white] (13,12) rectangle (14,13);
            \fill[blue!40!white] (13,13) rectangle (14,14);

            \draw[step=1cm,black,very thin] (0,0) grid (14,14);
        \end{tikzpicture}
  \caption{IP solution.}
\end{subfigure}%
\begin{subfigure}{.5\textwidth}
  \centering
  \begin{tikzpicture}[scale = 0.5]
            \draw[step=1cm,black,very thin] (0, 0) grid (14,14);
            \fill[blue!40!white] (0,0) rectangle (1,1);
            \fill[blue!40!white] (0,1) rectangle (1,2);
            \fill[blue!40!white] (0,2) rectangle (1,3);
            \fill[blue!40!white] (0,3) rectangle (1,4);
            \fill[blue!40!white] (0,4) rectangle (1,5);
            \fill[blue!40!white] (0,5) rectangle (1,6);
            \fill[blue!40!white] (0,6) rectangle (1,7);
            \fill[blue!40!white] (0,7) rectangle (1,8);
            \fill[blue!40!white] (0,8) rectangle (1,9);
            \fill[blue!40!white] (0,9) rectangle (1,10);
            \fill[blue!40!white] (0,10) rectangle (1,11);
            \fill[blue!40!white] (0,11) rectangle (1,12);
            \fill[blue!40!white] (0,12) rectangle (1,13);
            \fill[blue!40!white] (0,13) rectangle (1,14);

            \fill[blue!40!white] (1,1) rectangle (2,2);
   
            \fill[blue!40!white] (1,3) rectangle (2,4);
      
            \fill[blue!40!white] (1,5) rectangle (2,6);
      
            \fill[blue!40!white] (1,7) rectangle (2,8);
         
            \fill[blue!40!white] (1,9) rectangle (2,10);
     
            \fill[blue!40!white] (1,11) rectangle (2,12);
        
            \fill[blue!40!white] (1,13) rectangle (2,14);

            \fill[blue!40!white] (2,0) rectangle (3,1);
            \fill[blue!40!white] (2,1) rectangle (3,2);
            \fill[blue!40!white] (2,2) rectangle (3,3);
            \fill[blue!40!white] (2,3) rectangle (3,4);
            \fill[blue!40!white] (2,4) rectangle (3,5);
            \fill[blue!40!white] (2,5) rectangle (3,6);
            \fill[blue!40!white] (2,6) rectangle (3,7);
            \fill[blue!40!white] (2,7) rectangle (3,8);
            \fill[blue!40!white] (2,8) rectangle (3,9);
            \fill[blue!40!white] (2,9) rectangle (3,10);
            \fill[blue!40!white] (2,10) rectangle (3,11);
            \fill[blue!40!white] (2,11) rectangle (3,12);
            \fill[blue!40!white] (2,12) rectangle (3,13);
            \fill[blue!40!white] (2,13) rectangle (3,14);

            \fill[blue!40!white] (3,0) rectangle (4,1);
     
            \fill[blue!40!white] (3,2) rectangle (4,3);
     
            \fill[blue!40!white] (3,4) rectangle (4,5);
        
            \fill[blue!40!white] (3,6) rectangle (4,7);
        
            \fill[blue!40!white] (3,8) rectangle (4,9);
       
            \fill[blue!40!white] (3,10) rectangle (4,11);
        
            \fill[blue!40!white] (3,12) rectangle (4,13);

            \fill[blue!40!white] (4,0) rectangle (5,1);
            \fill[blue!40!white] (4,1) rectangle (5,2);
            \fill[blue!40!white] (4,2) rectangle (5,3);
            \fill[blue!40!white] (4,3) rectangle (5,4);
            \fill[blue!40!white] (4,4) rectangle (5,5);
            \fill[blue!40!white] (4,5) rectangle (5,6);
            \fill[blue!40!white] (4,6) rectangle (5,7);
            \fill[blue!40!white] (4,7) rectangle (5,8);
            \fill[blue!40!white] (4,8) rectangle (5,9);
            \fill[blue!40!white] (4,9) rectangle (5,10);
            \fill[blue!40!white] (4,10) rectangle (5,11);
            \fill[blue!40!white] (4,11) rectangle (5,12);
            \fill[blue!40!white] (4,12) rectangle (5,13);
            \fill[blue!40!white] (4,13) rectangle (5,14);

            \fill[blue!40!white] (5,1) rectangle (6,2);
         
            \fill[blue!40!white] (5,3) rectangle (6,4);
        
            \fill[blue!40!white] (5,5) rectangle (6,6);
        
            \fill[blue!40!white] (5,7) rectangle (6,8);
        
            \fill[blue!40!white] (5,9) rectangle (6,10);
       
            \fill[blue!40!white] (5,11) rectangle (6,12);
        
            \fill[blue!40!white] (5,13) rectangle (6,14);

            \fill[blue!40!white] (6,0) rectangle (7,1);
            \fill[blue!40!white] (6,1) rectangle (7,2);
            \fill[blue!40!white] (6,2) rectangle (7,3);
            \fill[blue!40!white] (6,3) rectangle (7,4);
            \fill[blue!40!white] (6,4) rectangle (7,5);
            \fill[blue!40!white] (6,5) rectangle (7,6);
            \fill[blue!40!white] (6,6) rectangle (7,7);
            \fill[blue!40!white] (6,7) rectangle (7,8);
            \fill[blue!40!white] (6,8) rectangle (7,9);
            \fill[blue!40!white] (6,9) rectangle (7,10);
            \fill[blue!40!white] (6,10) rectangle (7,11);
            \fill[blue!40!white] (6,11) rectangle (7,12);
            \fill[blue!40!white] (6,12) rectangle (7,13);
            \fill[blue!40!white] (6,13) rectangle (7,14);

            \fill[blue!40!white] (7,0) rectangle (8,1);
           
            \fill[blue!40!white] (7,2) rectangle (8,3);
            
            \fill[blue!40!white] (7,4) rectangle (8,5);
            
            \fill[blue!40!white] (7,6) rectangle (8,7);
            
            \fill[blue!40!white] (7,8) rectangle (8,9);
            
            \fill[blue!40!white] (7,10) rectangle (8,11);
            
            \fill[blue!40!white] (7,12) rectangle (8,13);

            \fill[blue!40!white] (8,0) rectangle (9,1);
            \fill[blue!40!white] (8,1) rectangle (9,2);
            \fill[blue!40!white] (8,2) rectangle (9,3);
            \fill[blue!40!white] (8,3) rectangle (9,4);
            \fill[blue!40!white] (8,4) rectangle (9,5);
            \fill[blue!40!white] (8,5) rectangle (9,6);
            \fill[blue!40!white] (8,6) rectangle (9,7);
            \fill[blue!40!white] (8,7) rectangle (9,8);
            \fill[blue!40!white] (8,8) rectangle (9,9);
            \fill[blue!40!white] (8,9) rectangle (9,10);
            \fill[blue!40!white] (8,10) rectangle (9,11);
            \fill[blue!40!white] (8,11) rectangle (9,12);
            \fill[blue!40!white] (8,12) rectangle (9,13);
            \fill[blue!40!white] (8,13) rectangle (9,14);

            \fill[blue!40!white] (9,1) rectangle (10,2);
            
            \fill[blue!40!white] (9,3) rectangle (10,4);
            
            \fill[blue!40!white] (9,5) rectangle (10,6);
           
            \fill[blue!40!white] (9,7) rectangle (10,8);
            
            \fill[blue!40!white] (9,9) rectangle (10,10);
           
            \fill[blue!40!white] (9,11) rectangle (10,12);
            
            \fill[blue!40!white] (9,13) rectangle (10,14);

            \fill[blue!40!white] (10,0) rectangle (11,1);
            \fill[blue!40!white] (10,1) rectangle (11,2);
            \fill[blue!40!white] (10,2) rectangle (11,3);
            \fill[blue!40!white] (10,3) rectangle (11,4);
            \fill[blue!40!white] (10,4) rectangle (11,5);
            \fill[blue!40!white] (10,5) rectangle (11,6);
            \fill[blue!40!white] (10,6) rectangle (11,7);
            \fill[blue!40!white] (10,7) rectangle (11,8);
            \fill[blue!40!white] (10,8) rectangle (11,9);
            \fill[blue!40!white] (10,9) rectangle (11,10);
            \fill[blue!40!white] (10,10) rectangle (11,11);
            \fill[blue!40!white] (10,11) rectangle (11,12);
            \fill[blue!40!white] (10,12) rectangle (11,13);
            \fill[blue!40!white] (10,13) rectangle (11,14);
            
            \fill[blue!40!white] (11,0) rectangle (12,1);
            
            \fill[blue!40!white] (11,2) rectangle (12,3);
            
            \fill[blue!40!white] (11,4) rectangle (12,5);
            
            \fill[blue!40!white] (11,6) rectangle (12,7);
            
            \fill[blue!40!white] (11,8) rectangle (12,9);
            
            \fill[blue!40!white] (11,10) rectangle (12,11);
            
            \fill[blue!40!white] (11,12) rectangle (12,13);

            \fill[blue!40!white] (12,0) rectangle (13,1);
            \fill[blue!40!white] (12,1) rectangle (13,2);
            \fill[blue!40!white] (12,2) rectangle (13,3);
            \fill[blue!40!white] (12,3) rectangle (13,4);
            \fill[blue!40!white] (12,4) rectangle (13,5);
            \fill[blue!40!white] (12,5) rectangle (13,6);
            \fill[blue!40!white] (12,6) rectangle (13,7);
            \fill[blue!40!white] (12,7) rectangle (13,8);
            \fill[blue!40!white] (12,8) rectangle (13,9);
            \fill[blue!40!white] (12,9) rectangle (13,10);
            \fill[blue!40!white] (12,10) rectangle (13,11);
            \fill[blue!40!white] (12,11) rectangle (13,12);
            \fill[blue!40!white] (12,12) rectangle (13,13);
            \fill[blue!40!white] (12,13) rectangle (13,14);
            
            \fill[blue!40!white] (13,0) rectangle (14,1);
            \fill[blue!40!white] (13,1) rectangle (14,2);
            
            \fill[blue!40!white] (13,3) rectangle (14,4);
            
            \fill[blue!40!white] (13,5) rectangle (14,6);
            
            \fill[blue!40!white] (13,7) rectangle (14,8);
            
            \fill[blue!40!white] (13,9) rectangle (14,10);
            
            \fill[blue!40!white] (13,11) rectangle (14,12);
            
            \fill[blue!40!white] (13,13) rectangle (14,14);

            \draw[step=1cm,black,very thin] (0,0) grid (14,14);
        \end{tikzpicture}
 		\caption{Brick pattern configuration.}
\end{subfigure}
\caption{The IP solution to the problem of finding efficient configurations and the corresponding brick pattern configuration on $14\times 14$ grid. Note that both have the maximum occupancy of $148$.}\label{fig:representationexamples}
\end{figure}
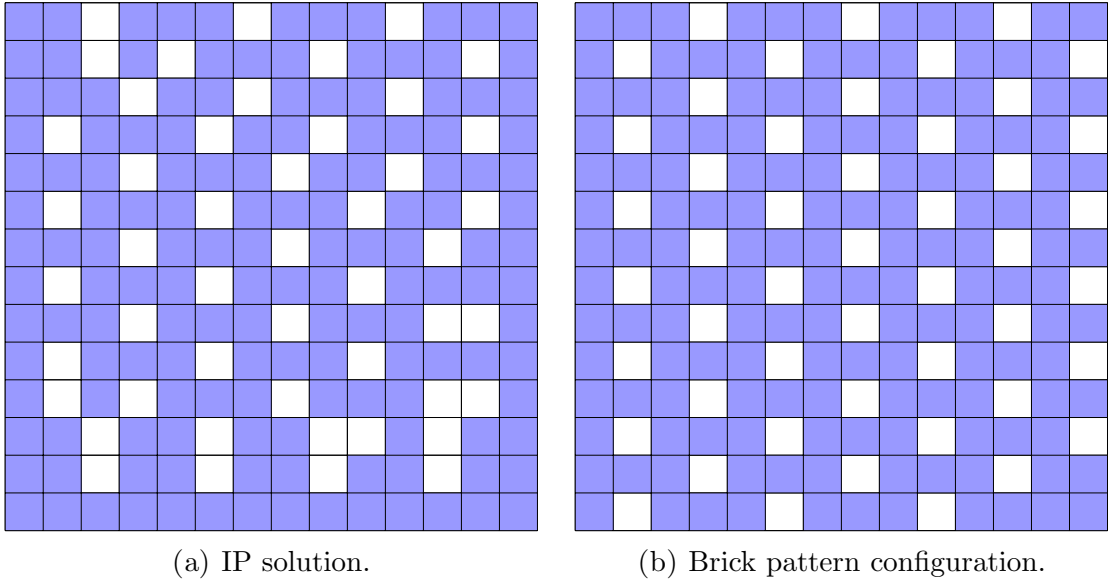

We leave the following question open.
\begin{question}\label{quest:Emn}
	What is the explicit expression for $E_{m,n}$?
\end{question}

\section{Alternative formulations of the problem}\label{sec:alt_formulations}

In this section we give alternative formulations  for the problem of finding efficient configurations in the hope of getting more people interested in answering Question \ref{quest:Emn}.

\subsection{Tilings}
The problem of finding \emph{efficient} configurations can be formulated as a tiling problem with overlaps and protrusions allowed; and rotations forbidden --- as follows. Let $C$ be a maximal configuration on an $m\times n$ grid. Now cover each empty lot by a $\bot$-tetromino which covers that lot as well as the neighboring lots to the east, west and north. Note that some tetrominoes may protrude from the grid, and some may overlap. The permissibility of $C$ guarantees that the $(m-1)\times (n-2)$ subgrid $\{1,2\dots,m-1\} \times \{2,3,\dots,n-1\}$ is completely covered by (at least one) $\bot$-tetromino. Conversely, each such a tiling guarantees the permissibility of the corresponding configuration. However, such a tiling does not guarantee that the corresponding configuration is maximal, even in the case when the tiling itself is maximal\footnote{The tiling is maximal if removing any tile leaves some lot in $\{1,2\dots,m-1\} \times \{2,3,\dots,n-1\}$ uncovered.}. Nevertheless, efficient configurations correspond exactly to tilings with the fewest number of tiles. Therefore $E_{m,n} = mn-k$ where $k$ is the number of tiles in an optimal tiling of the $(m-1)\times (n-2)$ grid.

This formulation of the tiling problem does not seem to be very common in literature. For some classical results about $T$-tetromino tilings see \cite{KP04,Walkup65,Criel08,steurer2009tilings}.

\subsection{Forbidden induced subgraph problem}

A tract of land can be represented as a rectangular lattice graph where each lot is represented by a vertex. In order to distinguish the north from the south we orient the edges as in Figure \ref{fig:latticegraph}. The problem of finding efficient configurations is equivalent to finding the largest set of vertices for which the induced graph does not contain either of the subgraphs in Figure \ref{fig:forbidden_subgraph}. The literature on the problem of forbidden induced subgraphs is extensive, see \cite{zaslavsky17,wiki-FGC}.

\begin{figure}
	\begin{tikzpicture}[darkstyle/.style={circle,draw,fill=gray!40}]
	\foreach \x in {0,...,5}
	\foreach \y in {0,...,3} 
	{\node [darkstyle] (\x\y) at (\x,\y) {};} 
	
	\foreach \x in {0,2,4}
	\foreach \y [count=\yi] in {0,...,2}{
		\draw[<-, line width = 1pt] (\x\y)--(\x\yi);
	}
	
	\foreach \x in {1,3,5}
	\foreach \y [count=\yi] in {0,...,2}{
		\draw[<-, line width = 1pt] (\x\yi)--(\x\y);
	}
	
	\draw[->, line width = 1pt] (00) -- (10);
	\draw[->, line width = 1pt] (01) -- (11);
	\draw[->, line width = 1pt] (02) -- (12);
	\draw[->, line width = 1pt] (03) -- (13);
	
	\draw[->, line width = 1pt] (20) -- (10);
	\draw[->, line width = 1pt] (21) -- (11);
	\draw[->, line width = 1pt] (22) -- (12);
	\draw[->, line width = 1pt] (23) -- (13);
	
	\draw[->, line width = 1pt] (20) -- (30);
	\draw[->, line width = 1pt] (21) -- (31);
	\draw[->, line width = 1pt] (22) -- (32);
	\draw[->, line width = 1pt] (23) -- (33);
	
	\draw[->, line width = 1pt] (40) -- (30);
	\draw[->, line width = 1pt] (41) -- (31);
	\draw[->, line width = 1pt] (42) -- (32);
	\draw[->, line width = 1pt] (43) -- (33);
	
	\draw[->, line width = 1pt] (40) -- (50);
	\draw[->, line width = 1pt] (41) -- (51);
	\draw[->, line width = 1pt] (42) -- (52);
	\draw[->, line width = 1pt] (43) -- (53);
	\end{tikzpicture}
	\caption{The graph representation of the $4\times 6$ tract of land.}\label{fig:latticegraph}
\end{figure}
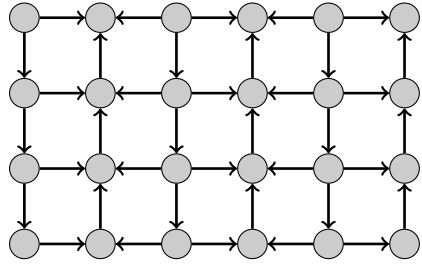

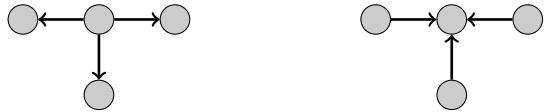
\begin{figure}
	\begin{subfigure}{.3\textwidth}\centering
		\begin{tikzpicture}[main/.style = {draw, circle,fill=gray!40}] 
		\node[main] (1) {}; 
		\node[main] (2) [right of=1] {};
		\node[main] (3) [right of=2] {}; 
		\node[main] (4) [below of=2] {};
		
		\draw[->, line width = 1pt] (2) -- (1);
		\draw[->, line width = 1pt] (2) -- (3);
		\draw[->, line width = 1pt] (2) -- (4);
		
		\end{tikzpicture}
	\end{subfigure}
	\begin{subfigure}{.3\textwidth}\centering
		\begin{tikzpicture}[main/.style = {draw, circle,fill=gray!40}] 
		\node[main] (1) {}; 
		\node[main] (2) [right of=1] {};
		\node[main] (3) [right of=2] {}; 
		\node[main] (4) [below of=2] {};
		
		\draw[->, line width = 1pt] (1) -- (2);
		\draw[->, line width = 1pt] (3) -- (2);
		\draw[->, line width = 1pt] (4) -- (2);
		
		\end{tikzpicture}
	\end{subfigure}
	\caption{The forbidden subgraphs.}\label{fig:forbidden_subgraph}
\end{figure}

\subsection{Subshift on \texorpdfstring{$\bbZ^2$}{Z2}}

Note that permissible configurations on $\bbZ^2$ can be interpreted as elements of the $\bbZ^2$-shift of finite type with the alphabet $\mathcal{A} = \{0,1\}$ where the set of forbidden patterns consists of a single pattern in Figure \ref{fig:forbidden_pattern}, see \cite{lemp2017shift}. An empty lot is represented by $0$, and occupied by $1$. It is interesting to note that the question whether the shift space defined by a set of forbidden patterns is empty, in general, undecidable \cite{Berger66, Robinson71}.

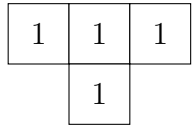
\begin{figure}
	\begin{tikzpicture}[scale = 0.8]
	
	\filldraw [fill=white, draw=black] (0,0) rectangle (1,1);
	\filldraw [fill=white, draw=black] (1,0) rectangle (2,1);
	\filldraw [fill=white, draw=black] (2,0) rectangle (3,1);
	\filldraw [fill=white, draw=black] (1,-1) rectangle (2,0);
	
	\node[] at (0.5,0.5) {$1$};
	\node[] at (1.5,0.5) {$1$};
	\node[] at (2.5,0.5) {$1$};
	\node[] at (1.5,-0.5) {$1$};
	\end{tikzpicture}
	\caption{The forbidden pattern.}\label{fig:forbidden_pattern}
\end{figure}

\section*{Acknowledgments} 

We wish to thank Juraj Bo\v{z}i\'{c} who introduced us to this problem that he came up with during his studies at Faculty of Architecture, University of Zagreb.

We additionally want to thank our colleagues Petar Baki\'{c}, Matija Ba\v{s}i\'{c} and Stipe Vidak thank to whom one particular instance of this problem ended up in the 10\textsuperscript{th} Middle European Mathematical Olympiad in V\"{o}klabruck, Austria (see \cite{MEMO}).

We also wish to thank Professor Tomislav Do\v{s}li\'{c} for fruitful and stimulating discussions.

\bibliographystyle{plain}
\bibliography{literature}

\end{document}